\title{Computing Generalized Ranks of Persistence Modules via Unfolding to Zigzag Modules}
\author{
    Tamal K. Dey
     \thanks{This research is supported by NSF grants CCF 2049010, DMS 2301360, and CCF 2437030}\\
\texttt{tamaldey@purdue.edu} 
 \And
 Cheng Xin\\
\texttt{xinc@purdue.edu} 
}
\date{Department of Computer Science\\ Purdue University} 
\theoremstyle{plain}
\newtheorem{theorem}{Theorem}[section]
\newtheorem{proposition}{Proposition}[section]
\newtheorem{observation}{Observation}[section]
\newtheorem{definition}{Definition}[section]
\newtheorem{fact}{Fact}[section]
\newtheorem{remark}{Remark}[section]
\newtheorem{notation}{Notation}[section]
 \newcommand{\propositionofref}{}
 \newcommand{\theoremofref}{}
\newcommand{\Int}{\mathbb{I}}
\newcommand{\Jnt}{\mathbb{J}}
\newcommand{\N}{\mathbb{N}}
\newcommand{\vecsp}{\mathbf{vec}_{\mathbb{F}}}
\newcommand*{\rom}[1]{\expandafter\@slowromancap\romannumeral #1@}
\newcommand{\field}[1] {{\mathbb{#1}}}
\newcommand{\rank}{\mathrm{rank}}
\newcommand{\prt}{\mathrm{prt}}
\newcommand{\M}{\mathbb{M}}
\newcommand{\Q}{\mathbb{Q}}
\newcommand{\U}{\mathbb{U}}
\newcommand{\D}{\mathcal{D}}
\newcommand{\Fld}{\mathrm{Fld}}
\newcommand{\Z}{\mathbb{Z}}
\newcommand{\bs}{{\sf b}}
\newcommand{\rk}{{\sf{rk}}}
\newcommand{\Pb}{P}
\newcommand{\Cb}{{\mathcal C}}
\newcommand{\colim@}[2]{%
  \vtop{\m@th\ialign{##\cr
    \hfil$#1\operator@font colim$\hfil\cr
    \noalign{\nointerlineskip\kern1.5\ex@}#2\cr
    \noalign{\nointerlineskip\kern-\ex@}\cr}}%
}
\newcommand{\limproj}{\mathsf{lim}}
\newcommand{\colim}{\mathsf{colim}}
\definecolor{darkred}{rgb}{1, 0.1, 0.3}
\definecolor{darkblue}{rgb}{0.1, 0.1, 1}
\newcommand{\cancel}[1]
\begin{document}

\maketitle
\begin{abstract}
For a persistence module $\M$ indexed by a finite poset $P$, its generalized rank is defined as the rank of the limit-to-colimit map for the diagram of constituent vector spaces and linear maps of $\M$
over the poset $P$.  
For $2$-parameter persistence
modules, a zigzag persistence based algorithm has been
proposed that takes advantage of the fact that generalized rank for $2$-parameter modules is equal to the number of \emph{full} intervals in a zigzag module defined on the `boundary' of the poset. Analogous definition of boundary for $d$-parameter persistence modules or general poset indexed persistence modules does not seem plausible. To overcome this difficulty, we first \emph{unfold} a given module $\M$ into a zigzag module $\M_{ZZ}$ and then check how many full interval modules in a decomposition of $\M_{ZZ}$ can be \emph{folded back} to remain \emph{full} in a decomposition of $\M$. This number determines the generalized rank of $\M$. We give an efficient
algorithm to compute this number 
for the case when $\M$ is induced by a filtration of a simplicial complex over
a finite poset.
For special cases where $\M$ is the degree-$d$ homology functor on a filtration of a
$d$-complex, we obtain a more efficient algorithm including a linear time
algorithm for degree-$1$ homology for graphs.

\end{abstract}
\section{Introduction}
It is well known that one-parameter persistence modules decompose into
interval modules whose supports constitute \emph{persistence diagram}, or equivalently
\emph{barcode} of the given module, a fundamental object of study in topological data analysis~\cite{BS14,DW22,EH2010,Oudot15}. There are many situations in which persistence modules are parameterized over more than one parameter~\cite{BL23,carlsson2009theory,escolar2016persistence,lesnick2015theory,miller2019modules}.
Unfortunately, such \emph{multiparameter
persistence modules} do not necessarily admit a nice decomposition into intervals only. Instead,
they may decompose into indecomposables that are more complicated~\cite{carlsson2009theory}.
To overcome this difficulty, inspired by the work of Patel~\cite{patel2018generalized},
Kim and M\'{e}moli~\cite{kim2018generalized} proposed a decomposition
of poset-indexed modules (satisfying some mild condition)
into \emph{signed} interval modules.
In analogy to the one-parameter case, the supports
of these signed interval modules with multiplicity
are called the \emph{signed barcode} of the given module~\cite{botnan2021signed}.
The multiplicity of the
intervals is given by the M\"{o}bius inversion of a rank-invariant function; see
\cite{kim2018generalized,patel2018generalized}.
Botnan, Oppermann, and Oudot~\cite{botnan2021signed} recently showed
that a unique minimal signed barcode of a given persistence module
in terms of rectangles can be computed efficiently and raised the question of efficient
computation of other types of signed barcodes or invariants; see the recent work
in~\cite{CKM24} for more exposition along this direction. 

At the core of computing these signed barcodes for a persistence module $\M$ is the
problem of computing the generalized rank for an interval $I$ that is defined as
the rank of the limit-to-colimit map for the diagram of vector spaces of the restricted module $\M|_I$; see~\cite{kim2018generalized}. Recently,
Dey, Kim, M\'{e}moli~\cite{DKM22} showed that, for a $2$-parameter persistence module,
this generalized rank is given by the number of \emph{full} intervals in the
decomposition of a zigzag module $\M_{ZZ}$ that is a restriction of $\M$.
This becomes immediately useful because there are efficient algorithms
known for computing the barcode of a zigzag module
~\cite{DH22,DHM25, maria2014zigzag,milosavljevic2011zigzag}. 
However,
this result is limited to $2$-parameter persistence modules because
the zigzag module needs to be defined on the
boundary of a ``two dimensional'' interval. Beyond the $2$-parameter, this boundary does not remain
to be a path and hence poses a challenge in defining an appropriate zigzag module.

In this paper, we address the above problem and present
an algorithm to compute generalized
rank efficiently for finite-dimensional modules indexed by finite posets. The approach
uses the idea of straightening up
the input persistence module into a module defined over a zigzag path.
We call the process {\em unfolding} the module. We compute a decomposition 
of the resulting zigzag module into interval modules using a known algorithm.
Then, we design an algorithm that aims to
\emph{fold} the full interval modules (supported on entire zigzag path) in the
decomposition of the zigzag module back to the original module.
The ones which fold successfully to a full interval summand (supported on the entire poset) in the
original module gives us the generalized rank 
according to a result of Chambers and Letscher~\cite{chambers2018persistent}.

A viable approach to compute the generalized rank would be to compute the
limit and colimit of a given persistence module separately, say with
the recent algorithm in~\cite{SHN23}, and then
compute the rank of the limit-to-colimit map. Finding an efficient
implementation of this approach remains open. Each of the 
computations for limit, colimit, and then the rank of the map between them
may incur considerable computational cost
in practice. Our approach also has three distinct computations, unfolding the
module, then computing a zigzag persistence followed by a folding process.
Among these, the first one is done by a simple graph traversal, the second one
can be done with recent efficient practical zigzag persistence algorithms~\cite{DH22,DHM25}. The folding process is the
only costly step for which we provide an efficient algorithm.

One could also argue that a full decomposition algorithm such as the one 
in~\cite{DX22}
or the well-known Meataxe algorithm can be used to compute the
generalized rank because they compute all full interval summands. However,
the algorithm in~\cite{DX22} does not work for nondistinctly graded modules and the
Meataxe algorithm has a high time complexity ($O(t^{18})$, $t$ is the maximum of poset and filtration sizes), as pointed out in~\cite{DX22}). Furthermore, these algorithms expect the input in matrix forms (presentations or linear maps) instead of filtrations that are common in practice. We alleviate these issues; more precisely, highlights of our approach are:
\begin{itemize}
    \item It introduces an unfolding/folding technique for $P$-modules, that is,
    finite dimensional persistence modules defined on a finite poset $P$, which may be of independent
    interest. 
    \item It provides an algorithm that, given a
    simplicial $P$-filtration of a simplicial complex inducing a $P$-module $\M$ by homology functor,
    computes the generalized rank $\rk(\M)$ in $O(t^{\omega+2})$ time,
    $\omega < 2.373$, where
    the description size of $P$ and the given filtration is at most $t$ (see Eq.~\eqref{eq:tdef} and Theorem~\ref{thm:main}). The algorithm
    does not need to go through an extra step of computing a presentation
    of $\M$ from its inducing $P$-filtration. In fact, currently all published efficient algorithms for computing presentations work for $2$-parameter modules ($P\subseteq \mathbb{R}^2,\Z^2$)~\cite{KR21,LW22} and not for a general $P$-module indexed by a finite poset $P$.
    \item It computes
full interval summands of $\M$ representing its ``global sections" supported on $P$.
\item It gives a more efficient $O(t^\omega)$ algorithm for
the special cases of degree $d$-homology for $d$-complexes and a linear time
algorithm for degree-$1$ homology in graphs.
\end{itemize}


\section{Preliminaries and idea} \label{sec:genrank}
\subsection{Persistence modules}
We consider finite dimensional persistence modules indexed by connected finite posets. 
\begin{definition}For a poset $P$, let $\leq_P$ denote the partial order defining it. We also treat $P$ as a category with every $p\in P$ as its object and $\leq_P$ inducing the morphisms between them. Two points $p\leq_P q$ in $P$ are called \emph{immediate} and written $p\rightarrow q$ or $q \leftarrow p$ if and only if there is no $r\in P$, $r\not\in\{ p,q\}$, satisfying $p\leq_P r \leq_P q$. We also
write $p\leftrightarrow q$ to denote that either $p\rightarrow q$ or
$p\leftarrow q$.
\end{definition}
\begin{definition}\label{def:intervals}
An \emph{interval} $I$ of a poset $P$ is a non-empty subset $I\subseteq P$ so that	
(i) $I$ is convex with the partial order of $P$, that is, if $p,q\in I$ and $p\leq_P r\leq_P q$, then $r\in I$ \label{item:convexity};
(ii) $I$ is \emph{connected}, that is, for any $p,q\in I$, there is a sequence $p=p_0,	p_2,\cdots,p_m=q$ of elements of $I$ with $p_i\leftrightarrow p_{i+1}$ for $i\in \{0,\cdots, m-1\}$\label{item:interval3}.
Assuming $P$ is finite and connected, $P$ is also an interval
called the \emph{the full interval}.
\end{definition}

\begin{definition}
Given a poset $P$, we define a $P$-module to be a functor $\M:P\rightarrow \vecsp$ where
$\vecsp$ is the category of finite dimensional vector spaces over a fixed field $\mathbb{F}$ with the
morphisms being the
linear maps among them. 
For two points
$p\leq_P q$, we also write $\M(p\leq_P q)$ to denote the 
morphisms of $\M$.
\end{definition}

\begin{definition}
A $P$-module $\M$ is called indecomposable if there is no direct sum $\M\cong\M_1\oplus \M_2$
so that both $\M_1$ and $\M_2$ are nontrivial $P$-modules.
\end{definition}
Any (pointwise) finite dimensional $P$-module is a direct sum of indecomposables with local
endomorphism ring~\cite{CraBoe94}; see also~\cite{BC20}. Such a decomposition is essentially unique 
up to automorphism according to
Azumaya-Krull-Remak-Schmidt theorem~\cite{azumaya1950corrections}:
(Also see \cite[Theorem 1.11]{Kirilov}).
\begin{theorem}
Every $P$-module has a unique decomposition up to isomorphism
$\M\cong \M_1\oplus\cdots\oplus \M_k$ where each $\M_i$ is an indecomposable module. 
\label{thm:krull-schmidt}
\end{theorem}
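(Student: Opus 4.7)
The plan is to invoke two classical results in succession, since the statement is essentially a direct specialization of theorems already cited in the paragraph preceding it. First, I would establish existence of the indecomposable decomposition by applying the Crawley-Boevey theorem~\cite{CraBoe94}: any pointwise finite-dimensional functor from a small category into $\vecsp$ decomposes as a direct sum of indecomposables, each with a local endomorphism ring. Since $P$-modules in our setting are pointwise finite dimensional and take values in $\vecsp$, this yields a decomposition $\M\cong\bigoplus_{i\in\Lambda}\M_i$ with each $\M_i$ indecomposable having local endomorphism ring. Finiteness of the poset $P$ combined with finite-dimensionality of each $\M(p)$ forces $\Lambda$ to be finite, giving $\M\cong \M_1\oplus\cdots\oplus \M_k$ as required.

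Next, I would establish uniqueness by appealing directly to the Azumaya--Krull--Remak--Schmidt theorem~\cite{azumaya1950corrections}: whenever a module admits a decomposition into indecomposables with local endomorphism rings, any two such decompositions agree up to a permutation and isomorphism of the summands. Applied to the two possible decompositions of $\M$ provided by the existence step, this yields that if $\M\cong\M_1\oplus\cdots\oplus\M_k\cong \M_1'\oplus\cdots\oplus\M_{k'}'$ are two decompositions into indecomposables, then $k=k'$ and there is a permutation $\sigma$ with $\M_i\cong \M_{\sigma(i)}'$.

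The only step that requires any care is verifying that the hypotheses of these two classical theorems apply verbatim to our functor category $P\to\vecsp$. This reduces to checking that the endomorphism ring of each indecomposable summand is local, which is precisely what Crawley--Boevey's theorem delivers, so no further argument is needed. In this sense there is no real obstacle: the result is recorded and cited for later use (it underlies the notion of \emph{full interval summand} that the folding step of the algorithm will identify), and the proof consists solely of combining the two cited theorems in the form in which they are stated.
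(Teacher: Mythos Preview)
Your proposal is correct and matches the paper's approach exactly: the paper does not prove this theorem but simply cites Crawley--Boevey~\cite{CraBoe94} for existence (indecomposable summands with local endomorphism rings) and the Azumaya--Krull--Remak--Schmidt theorem~\cite{azumaya1950corrections} for uniqueness. No additional argument is given or needed.
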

\fbox{\parbox{\textwidth}{
For a decomposable module $\M$, there exist submodules $\M_i$, $i\in [k]$, of $\M$ with inclusions
$j_i: \M_i\rightarrow \M$ so that $(j_i: \M_i\rightarrow \M)_{i\in [k]}$ make $\M$ the direct sum of
$\M_i$s written as $\M=\M_1\oplus\cdots\oplus \M_k$. In light of this,
we replace the isomorphism in Theorem~\ref{thm:krull-schmidt} $\M\cong\M_1\oplus\cdots\oplus \M_k$
with equality $\M=\M_1\oplus\cdots\oplus \M_k$ where each $\M_i$ is indecomposable and
is a \emph{submodule} of $\M$; see Figure~\ref{fig:genrank}.
We call such a decomposition an \emph{internal direct decomposition} or
simply \emph{direct decomposition} denoted $\D:=\D(\M)$. We treat
$\D$ as the set $\{\M_i\}_{i\in[k]}$ of indecomposables if there is no confusion.
Notice that the uniqueness of such a decomposition
is up to automorphisms of $\M$ (and permutations of $\M_i$s). This
aspect plays an important role in our algorithm to follow.}
}

\vspace{0.1in}
For an interval $I$ of $P$, any module $\Int^I:P\rightarrow \vecsp$
is an \emph{interval module} if:
\begin{equation*}
\Int^I(p)\cong\begin{cases}
\mathbb{F}&\mbox{if}\ p\in I,\\0
&\mbox{otherwise,} 
\end{cases}\hspace{15mm} \Int^I(p\leq_P q)\cong\begin{cases} \mathrm{id}_\mathbb{F}& \mbox{if} \,\,p,q\in I,\ p\leq_P q,\\ 0&\mbox{otherwise.}\end{cases}
\end{equation*}

\begin{definition}An interval module $\Int=\Int^P$ $(\Int(p)\neq 0$ for any $p\in P)$ is a \emph{full interval module}(Figure~\ref{fig:genrank}).
\end{definition}
\begin{figure}[htbp]
\centerline{\includegraphics{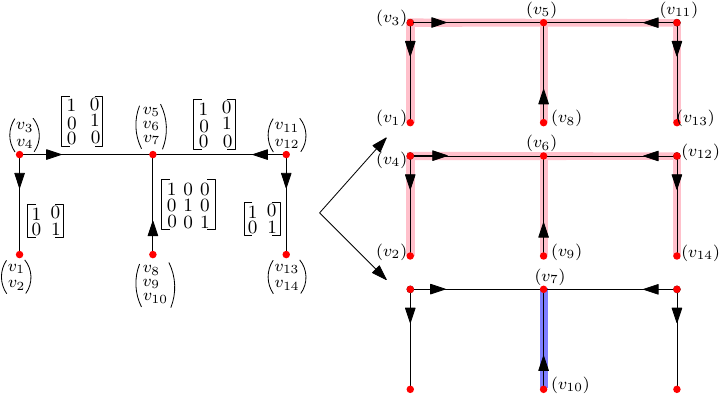}}
\caption{(left) A $P$-module with vector spaces generated by bases shown at the points of $P$ (arrows represent the partial order)
and internal maps shown as matrices; 
(right) a direct decomposition that contains
two full interval modules (top,middle) and another interval module (bottom) which is not full.}
\label{fig:genrank}
\end{figure}

\subsection{Limits and colimits and generalized rank}
\label{sec:limits-colimits}

We recall the notions of limit and colimit from category theory \cite{mac2013categories}.
Although it is known that limit and colimit
may not exist for all functors, they do exist for functors defined on finite posets, which
is the case we consider. The following definitions are reproduced
from~\cite{DKM22}. Let $\Cb$ denote a category in the following definitions.

\begin{definition}[Cone]\label{def:cone} Let $F:\Pb\rightarrow \Cb$ be a functor. A \emph{cone} over $F$ is a pair $\left(L,(\pi_p)_{p\in P}\right)$ consisting of an object $L$ in $\Cb$ and a collection $(\pi_p)_{p\in P}$ of morphisms $\pi_p:L \rightarrow F(p)$ that commute with the arrows in the diagram of $F$, i.e. if $p\leq q$ in $\Pb$, then $\pi_q= F(p\leq q)\circ \pi_p$ in $\Cb$, i.e. the diagram below commutes.
\end{definition}
\begin{equation}\label{eq:cone commutativity}
    \begin{tikzcd}F(p)\arrow{rr}{F(p\leq q)}&&F(q)\\
& L \arrow{lu}{\pi_p} \arrow{ru}[swap]{\pi_q}\end{tikzcd}
\end{equation}

The cone $\left(L,(\pi_p)_{p\in \Pb}\right)$ over $F$ sometimes is denoted simply by $L$, suppressing the morphisms $(\pi_p)_{p\in \Pb}$ if it causes no confusion. A limit of $F:\Pb\rightarrow \Cb$ is an object in the collection of all cones over $F$ that is terminal:

\begin{definition}[Limit]\label{def:limit} Let $F:\Pb\rightarrow \Cb$ be a functor. A \emph{limit} of $F$ is a cone over $F$, denoted by $\left(\limproj\, F,\ (\pi_p)_{p\in \Pb} \right)$ or simply $\limproj\, F$, with the following (universal) \emph{terminal} property: 
For any cone $\left(L',(\pi'_p)_{p\in \Pb} \right)$ of $F$, there is a \emph{unique} morphism $u:L'\rightarrow \limproj\, F$ such that $\pi_p'=\pi_p\circ u$ for all $p\in \Pb$.
\end{definition}

Cocones and colimits are defined in a dual manner:
\begin{definition}[Cocone]\label{def:cocone} Let $F:\Pb\rightarrow \Cb$ be a functor. A \emph{cocone} over $F$ is a pair $\left(C,(i_p)_{p\in \Pb}\right)$ consisting of an object $C$ in $\Cb$ and a collection $(i_p)_{p\in \Pb}$ of morphisms $i_p:F(p)\rightarrow C$ that commute with the arrows in the diagram of $F$, i.e. if $p\leq q$ in $\Pb$, then $i_p= i_q\circ F(p\leq q)$ in $\Cb$, i.e. the diagram below commutes.
\end{definition}
\begin{equation}\label{eq:cocone commutativity}
    \begin{tikzcd}
&C\\
F(p)\arrow{ru}{i_p}\arrow{rr}[swap]{F(p\leq q)}&&F(q)\arrow{lu}[swap]{i_q}
\end{tikzcd}
\end{equation}

In Definition \ref{def:cocone}, a cocone $\left(C,(i_p)_{p\in \Pb}\right)$ over $F$ is sometimes denoted simply by $C$, suppressing the collection $(i_p)_{p\in \Pb}$ of morphisms. A colimit of $F:\Pb\rightarrow \Cb$ is an initial object in the collection of cocones over $F$:

\begin{definition}[Colimit]\label{def:colimit}Let $F:\Pb\rightarrow \Cb$ be a functor. A \emph{colimit} of $F$ is a cocone, denoted by $\left(\colim\, F,\ (i_p)_{p\in \Pb}\right)$ or simply $\colim\, F$, with the following \emph{initial} property: If there is another cocone $\left(C', (i'_p)_{p\in \Pb}\right)$ of $F$, then there is a \emph{unique} morphism $u:\colim\, F\rightarrow C'$  such that $i'_p=u\circ i_p$ for all $p\in \Pb$.
\end{definition}

The following proposition gives a standard way of constructing a limit of a $P$-module $\M$. See for example \cite{DKM22,kim2018generalized}).

\begin{notation}\label{not:sim}
Let $p,q\in P$ and let $v_{p}\in \M(p)$ and $v_{q}\in \M(q)$. We write $v_p\sim v_q$ if $p$ and $q$ are comparable, and either $v_p$ is mapped to $v_q$ via $\M(p\leq_P q)$ or $v_q$ is mapped to $v_p$ via $\M(q\leq_P p)$.
\end{notation}

\begin{proposition}\label{prop:alternate-limit} 
	\begin{enumerate}[label=(\roman*)]
		\item \label{item:limit}The limit of $\M$ is (isomorphic to) the pair $\left(W,(\pi_p)_{p\in P}\right)$ described as follows: 
		\begin{equation}\label{eq:limit}
		    W:=\left\{(v_p)_{p\in P}\in \bigoplus_{p\in P} \M(p):\ \forall p\leq q \mbox{ in } P,\ v_{p}\sim v_{q} \right\}
		\end{equation}
		and for each $p\in P$, the map $\pi_p:W\rightarrow \M(p)$ is the canonical projection. 
		An element of $W$ is called a \emph{global section} of $\M$.
		\item The colimit of $\M$ is (isomorphic to) the pair $\left(U, (i_p)_{p\in \Pb}\right)$ described as follows: For $p\in \Pb$, let the map $j_p:\M(p)\hookrightarrow \bigoplus_{p\in \Pb}\M(p)$ be the canonical injection. The space $U$ is the quotient  space $\left(\bigoplus_{p\in \Pb}\M(p)\right)/T$, where $T$ is the subspace of $\bigoplus_{p\in \Pb} \M(p)$ which is generated by the vectors of the form  $j_p(v_p)-j_q(v_q), \ v_p\sim v_{q},$ the map $i_p:\M(p)\rightarrow U$ is the composition $\rho\circ j_p$, where $\rho$ is the quotient map $\bigoplus_{p\in \Pb} \M(p)\rightarrow U$.
		\label{item:colim}
	\end{enumerate}
\end{proposition} 

These definitions imply that, for every $p\leq_P q$ in $P$, $\M(p\leq_P q)\circ \pi_p =\pi_q\ \ \mbox{and }\  i_q\circ \M(p\leq_P q) =i_p$, which in turn imply $i_p \circ \pi_p=i_q \circ \pi_q:L\rightarrow C$ for any $p,q\in P$. 

\begin{definition}[{\cite{kim2018generalized}}]\label{def:rank}
The \emph{canonical limit-to-colimit map $\psi_\M:\limproj\, \M\rightarrow \colim\, \M$} is the linear map $i_p\circ \pi_p$ for \emph{any} $p\in P$. The \emph{generalized rank} of $\M$ is $\rk(\M):=\rank(\psi_\M)$. 
\end{definition}

The following result allows us to compute $\rk(\M)$ as the number of the full interval modules in a direct decomposition of $\M$.
\begin{theorem}[{\cite[Lemma 3.1]{chambers2018persistent}}]
The rank $\rk(\M)$ is equal to the number of full interval modules in a direct decomposition of $\M$. 
\label{thm:rk}
\end{theorem}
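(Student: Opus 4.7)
The plan is to reduce to indecomposable summands via Krull--Schmidt and then analyze each summand separately. By Theorem~\ref{thm:krull-schmidt}, write $\M=\M_1\oplus\cdots\oplus\M_k$ with each $\M_i$ indecomposable. Since both limits and colimits in $\vecsp$ commute with finite direct sums, we obtain $\limproj\,\M=\bigoplus_i\limproj\,\M_i$ and $\colim\,\M=\bigoplus_i\colim\,\M_i$, and the canonical map decomposes as $\psi_{\M}=\bigoplus_i\psi_{\M_i}$. Hence $\rk(\M)=\sum_i\rank(\psi_{\M_i})$, so it suffices to show that for an indecomposable $\M_i$, $\rank(\psi_{\M_i})=1$ when $\M_i\cong\Int^P$ and $\rank(\psi_{\M_i})=0$ otherwise.

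The case $\M_i\cong\Int^P$ is immediate: every structure map is $\mathrm{id}_\mathbb{F}$, so $\limproj\,\M_i=\colim\,\M_i=\mathbb{F}$ and $\psi_{\M_i}=\mathrm{id}_\mathbb{F}$, giving rank one. The remaining task is the converse: if $\M_i$ is indecomposable and $\rank(\psi_{\M_i})\geq 1$, then $\M_i\cong\Int^P$. Choose $v\in\limproj\,\M_i$ with $\psi_{\M_i}(v)\neq 0$. Because $\psi_{\M_i}=i_p\circ\pi_p$ for \emph{every} $p\in P$, this assumption forces $\pi_p(v)\neq 0$ for all $p$. The one-dimensional subspaces $N_v(p):=\mathbb{F}\cdot\pi_p(v)\subseteq\M_i(p)$ are preserved by all internal maps, since $\M_i(p\leq_P q)(\pi_p(v))=\pi_q(v)$, so $N_v$ is a submodule of $\M_i$ isomorphic to $\Int^P$.

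The crux, and the step I expect to be the main obstacle, is upgrading the inclusion $\iota:N_v\hookrightarrow\M_i$ to a direct-summand splitting. I would construct a retraction from the colimit side: pick a linear functional $\varepsilon:\colim\,\M_i\to\mathbb{F}$ with $\alpha:=\varepsilon(\psi_{\M_i}(v))\neq 0$, and set $\phi_p:=\varepsilon\circ i_p$. The colimit compatibility $i_q\circ\M_i(p\leq_P q)=i_p$ makes $(\phi_p)_{p\in P}$ a natural transformation $\phi:\M_i\to\Int^P$. Then $\phi_p(\pi_p(v))=\alpha$ for every $p$, so $\phi\circ\iota:N_v\to\Int^P$ is multiplication by $\alpha$, hence an isomorphism. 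Consequently $r:=(\phi\circ\iota)^{-1}\circ\phi$ is a retraction of $\iota$, producing a splitting $\M_i=N_v\oplus\ker r$. Indecomposability together with $N_v\neq 0$ forces $\ker r=0$, so $\M_i=N_v\cong\Int^P$, as required. Summing $\rank(\psi_{\M_i})$ over $i$ then yields exactly the number of full interval summands in the decomposition.
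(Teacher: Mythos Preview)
Your argument is correct. Note, however, that the paper does not supply its own proof of this statement: Theorem~\ref{thm:rk} is quoted from \cite[Lemma~3.1]{chambers2018persistent} and used as a black box throughout. So there is no in-paper proof to compare against.

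Your route---reducing via Krull--Schmidt to indecomposables, using additivity of $\limproj$, $\colim$, and hence of $\psi$, and then splitting off a copy of $\Int^P$ from any indecomposable with $\rank(\psi)\geq 1$ via a colimit-built retraction---is the natural one and is essentially the argument in the cited reference. The only spot worth a remark is the splitting step: you rely on $\vecsp$ being semisimple (any linear functional $\varepsilon$ with $\varepsilon(\psi(v))\neq 0$ exists), which is fine here but is exactly what would fail over more general coefficient categories. Everything else is clean.
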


\subsection{Idea of unfolding to zigzag module}
The overall idea of our approach is to ``straighten up'' the given $P$-module $\M$ into
a zigzag module $\M_{ZZ}$ which is a zigzag module defined over
a linear poset $P_{ZZ}$. It is well known that a zigzag module like $\M_{ZZ}$ decomposes into interval modules and efficient algorithms for computing them exist. After
computing these interval modules, we attempt to fold back the full interval modules in this
decomposition of $\M_{ZZ}$
to full interval summands of the original module $\M$. 
We know that full interval modules that are summands of $\M$ unfolds into
full interval modules that are summands of $\M_{ZZ}$. However, the converse may not hold, that is, not
all full interval summands of $\M_{ZZ}$ fold back to full interval summands of $\M$. 
So, the main challenge
becomes to determine which full interval modules in the computed decomposition of $\M_{ZZ}$ do indeed
fold back (possibly with some modifications) to full interval summands of $\M$. 

\begin{figure}[htbp]
\centerline{\includegraphics{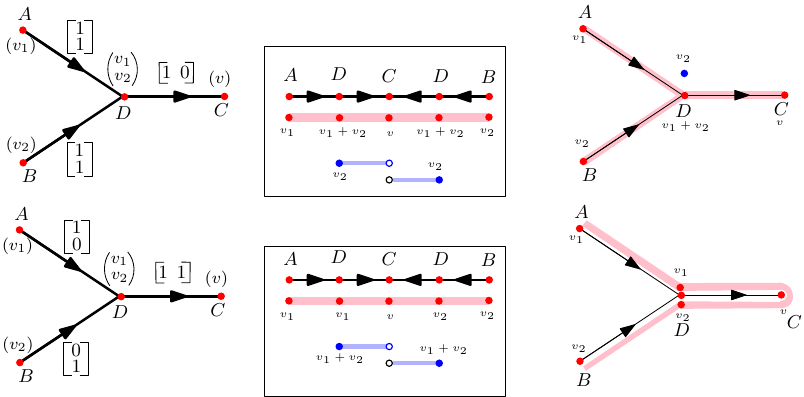}}
\caption{Unfolding two modules (defined over $\Z_2$) on the same poset into zigzag modules; hollow circles represent zero vectors. (top row) 
Decomposition of the
zigzag module folds back to two interval modules, one with support
on the entire poset giving rise to a generalized rank equal to one, and the other with support on $D$. (bottom row) Full interval module in the decomposition of the zigzag module does not
fold back to a full interval module (attempted folding shows the discrepancy at point $D$), giving rise to generalized rank equal to zero.}
\label{fig:unfoldingEx}
\end{figure}
Figure~\ref{fig:unfoldingEx} illustrates this idea. The module $\M$ is defined on
a four point poset $P=\{A,B,C,D\}$ shown on left. Bases of the vector spaces are shown in open brackets `$()$' and
linear maps in these bases are shown as matrices.
The poset is straightened into a zigzag path $A\rightarrow D \rightarrow C \leftarrow D \leftarrow B$. 
One way to look at this straightening
is to view $P$ as a directed graph and to traverse all its vertices and edges sequentially
possibly with repetition. Starting from the vertex $A$ and moving to an adjacent node 
disregarding the direction while noting down the visited node and the directed edge
produces the zigzag path. This process of unfolding a poset into a zigzag path is formalized
in section~\ref{sec:unfold}. The module $\M$ is unfolded into the zigzag module $\M_{ZZ}$ by copying the vector
spaces and linear maps at vertices and edges respectively into their unfolded versions. For the
module shown on the top, we get three interval modules (bars) in a decomposition of
$\M_{ZZ}$, the full interval module supported on $[A,B]$ and the other two supported on
two copies of
$D$ respectively. Bases of one dimensional vector spaces for the interval modules are indicated beneath
them. When we fold back $P_{ZZ}$ to $P$ (reversing the process of unfolding) sending $\M_{ZZ}$ to $\M$,
the full interval module does fold back to a summand that is a full interval module because the vectors $v_1+v_2$\footnote{this is a vector addition, not a direct sum} at two copies of $D$
are the same and hence map to the same vector in $\M(D)$. The other two 
interval modules supported on single-points in the decomposition of $\M_{ZZ}$ also
fold back to a summand of $\M$ supported on a single point $D$
where $\M(D)$ is generated by the vector $v_2$. 

The case for the module shown in the bottom row of Figure~\ref{fig:unfoldingEx} is not the same.
In this case, $\M_{ZZ}$ also has the barcode consisting of the same three intervals, but the corresponding interval
modules are not the same. The full interval module in this case has different vector spaces spanned by $v_1$ and $v_2$ respectively 
at the two copies of $D$. Thus, this interval module does not fold into a full interval submodule of $\M$ as an attempt on right indicates. We can determine such full interval modules
in a decomposition of the zigzag module by checking if the vectors at the copied vertices
are the same or not. However, even if this check succeeds, the interval module
may not fold to a summand of $\M$. Figure~\ref{fig:unfoldingEx3} (top and bottom) shows
such examples. On the other hand, 
even if this check fails, it may be possible to change
the full interval module to have the vectors to be the same
at the copied vertices. Figure~\ref{fig:unfoldingEx2} in section~\ref{sec:complete-limit} illustrates such an example. Determining
such cases and taking actions accordingly are key aspects of our algorithm.

\section{Folding and Unfolding}
\label{sec:folding-unfolding}
As already indicated, our algorithm operates on two 
main constructs called \emph{folding} and \emph{unfolding} of persistence
modules. In this section, we introduce these concepts formally for
general modules and then later specialize them to zigzag modules for the algorithm.

\begin{definition}
Let $Q$ be a finite poset. A poset $\Fld_s Q$ is a \emph{folded} poset of
$Q$ if there exists a surjection $s: Q\rightarrow \Fld_s Q$, which
(i) preserves order, that is, $p\leq_Q q$ only if $s(p)\leq_{\Fld_s Q} s(q)$ for all $p,q \in Q$, and (ii) surjects also on the Hasse diagram of $\Fld_s Q$, that is, for every immediate pair $u\rightarrow v$ in $\Fld_s Q$, there is a pair $p\leq_Q q$ where $s(p)=u$ and $s(q)=v$. We say $s$ is a folding of
$Q$ and $Q$ is an \emph{unfolded} poset of $\Fld_s Q$.
\end{definition}

Viewing posets as categories, a folding $s:Q\rightarrow \Fld_s Q$ can be viewed as a functor from $Q$ to $\Fld_s Q$.

\begin{definition}
Let $P=\Fld_s Q$ and $\M:P\to \vecsp$ be a $P$-module and $\N:Q\to \vecsp$ be a  $Q$-module. We say $\M$ is an $s$-folding of $\N$ $(\N$ $s$-folds or simply folds into $\M)$ and $\N$ is an $s$-unfolding of $\M$ $(\M$ $s$-unfolds or simply unfolds into $\N)$ 
if there is a folding $s:Q\rightarrow P$ so that
\begin{equation}
     \N(q)=\M(s(q))\, ~(equality~as~sets)~ \forall q\in Q\,;\, \N(p\leq_Q q)=\M(s(p)\leq_{P} s(q))\, 
    \forall (p \leq_Q q). \label{eq:folding}
\end{equation}
We write $\M=\Fld_s(\N)$ and $\N=\Fld_s^{-1}(\M)$.
\label{def:folding}
\end{definition}


\begin{remark}
Observe that for a given folding $s:Q\to P$, a $P$-module $\M$ always has an induced $s$-unfolding $\M\circ s$ by pre-composition with $s$. However, for a given $Q$-module $\N$, an $s$-folding may not exist because it may happen that $\N(q)\not =\N(q')$ where
$s(q')=s(q)$, or $\N(q_1\rightarrow q_2)\not = \N(q_1'\rightarrow q_2')$ where
$s(q_1)=s(q_1')$ and $s(q_2)=s(q_2')$.

An interesting and important fact is that two isomorphic modules 
may have different $s$-foldings. Figure~\ref{fig:unfoldingEx} shows such an example. 
Two zigzag modules shown in the middle are isomorphic (barcodes are the same), but they are not exactly the same as modules (even though vector spaces are pointwise equal, morphisms are not). 
So, even if
they are isomorphic, they fold to different modules as shown in left.
Nevertheless, if a folding exists, a module necessarily folds to a \emph{unique} module as
Proposition~\ref{prop:unique} states. 
\end{remark}

\cancel{
Following observation holds because of the uniqueness property of folding if it exists.
\begin{proposition}\label{prop:folding-summand}
Given a $Q$-module $\N=\N_1\oplus \N_2$ and a folding $s:Q\to P$, 
(i) if $\Fld_s(\N_1), \Fld_s(\N_2)$ exist, then $\Fld_s(\N)$
exists and $\Fld_s(\N)=\Fld_s(\N_1)\oplus \Fld_s(\N_2)$;
(ii) if $\Fld_s(\N_1)$ and $\Fld_s(\N)$ exist, then $\Fld_s(\N_2)$ exists
and $\Fld_s(\N)=\Fld_s(\N_1)\oplus \Fld_s(\N_2)$.
\end{proposition}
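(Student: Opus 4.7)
The plan is to reduce both parts to the pointwise criterion for the existence of a folding: $\Fld_s(\N)$ exists iff $\N(q)=\N(q')$ as vector spaces and $\N(p\to q)=\N(p'\to q')$ as linear maps whenever $s$ identifies the respective arguments, combined with the uniqueness of folding from Proposition~\ref{prop:unique}. The internal direct decomposition $\N=\N_1\oplus \N_2$ says that at every $q\in Q$ the equality $\N(q)=\N_1(q)\oplus \N_2(q)$ holds as an internal sum of subspaces, and that each morphism of $\N$ splits accordingly as $\N(p\to q)=\N_1(p\to q)\oplus \N_2(p\to q)$.

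For part (i), take $q,q'\in Q$ with $s(q)=s(q')$. The hypotheses yield $\N_i(q)=\N_i(q')$ for $i=1,2$, hence $\N(q)=\N_1(q)\oplus \N_2(q)=\N_1(q')\oplus \N_2(q')=\N(q')$, and the same pointwise splitting settles the morphism condition; thus $\Fld_s(\N)$ exists. Next I would define the candidate $\M:=\Fld_s(\N_1)\oplus \Fld_s(\N_2)$, which is a $P$-module satisfying $\M\circ s=\N_1\oplus \N_2=\N$, and invoke uniqueness of folding to conclude $\M=\Fld_s(\N)$.

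For part (ii), the plan is to first establish $\N_2(q)=\N_2(q')$ and $\N_2(p\to q)=\N_2(p'\to q')$ for every $s$-identified pair, after which the criterion produces $\Fld_s(\N_2)$ and part (i) delivers the direct-sum identity. The main obstacle is the first equality: knowing only that $\N(q)=\N(q')$ and $\N_1(q)=\N_1(q')$ does not pin down a complement of $\N_1(q)$ inside $\N(q)$, since distinct subspaces can each serve as a complement. The strategy is to use that $\N_2$ is not an abstract complement but a fixed submodule, obtained as the image of a canonical idempotent $e:\N\to \N$ attached to the decomposition, and to show that $e$ descends along $s$: from the existence of $\Fld_s(\N)$ and $\Fld_s(\N_1)$ one would argue that the projection $e_q:\N(q)\to \N(q)$ coincides with $e_{q'}:\N(q')\to \N(q')$ on $s$-identified fibers, which then gives $\N_2(q)=e_q(\N(q))=e_{q'}(\N(q'))=\N_2(q')$. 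I expect this descent step to be the delicate point of the proof; if it cannot be pushed through without additional structure on $s$ or on the decomposition, the statement would require a supplementary hypothesis ensuring the decomposition respects the fibers of $s$.
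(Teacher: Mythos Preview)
Your treatment of part (i) is correct and matches the paper's proof: both verify the pointwise folding criterion from the direct-sum splitting and then invoke the uniqueness of folding (Proposition~\ref{prop:unique}) on the identity $(\Fld_s(\N_1)\oplus\Fld_s(\N_2))\circ s=\N$.

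Your suspicion about part (ii) is well placed: the statement is false as written, and the paper's proof (``(ii) can be proved similarly'') is not a proof. Here is a counterexample in the paper's own framework. Take $Q=\{q_0,q_1,q_2\}$ with $q_0\to q_1$, $q_0\to q_2$ and $P=\{p_0\to p_1\}$, with the folding $s(q_0)=p_0$, $s(q_1)=s(q_2)=p_1$. Define $\N(q_0)=0$, $\N(q_1)=\N(q_2)=\mathbb{F}^2$, all structure maps zero. Set $\N_1(q_1)=\N_1(q_2)=\mathrm{span}(e_1)$, and $\N_2(q_1)=\mathrm{span}(e_2)$ but $\N_2(q_2)=\mathrm{span}(e_1+e_2)$; at $q_0$ everything is zero. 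Then $\N=\N_1\oplus\N_2$ is an internal direct decomposition, $\Fld_s(\N)$ and $\Fld_s(\N_1)$ exist, yet $\N_2(q_1)\neq\N_2(q_2)$ so $\Fld_s(\N_2)$ does not exist. This is exactly the failure of ``idempotent descent'' you anticipated: the projection onto $\N_2$ along $\N_1$ is different at $q_1$ and $q_2$ even though both $\N$ and $\N_1$ agree there.

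The paper effectively abandons this proposition and replaces it with Theorem~\ref{thm:full-folding}, whose part (3) salvages the intended application by adding the hypotheses that $\N_1$ is a \emph{full interval module} and that $\Fld_s(\N_1)$ is already known to be a \emph{summand} of $\M=\Fld_s(\N)$; the proof then goes through Krull--Remak--Schmidt rather than a direct pointwise argument. So your diagnosis that ``the statement would require a supplementary hypothesis'' is exactly right.
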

\begin{proof}
We prove (i) and (ii) can be proved similarly.
First, notice that, since $\Fld_s(\N_1)$ and
$\Fld_s(\N_2)$ exist, there are graded bases $B^{\N_1}$, $B^{\N_2}$, and internal linear
maps $B^{\N_1}\rightarrow B^{\N_1}$, $B^{\N_2}\rightarrow B^{\N_2}$ respectively 
which $s$-fold to vector spaces and internal linear maps of $\Fld_s(\N)$. Thus, $\Fld_s(\N)$ exists.
Since $\Fld_s(\N)\circ s=\N=\N_1\oplus \N_2=(\Fld_s(\N_1)\oplus \Fld_s(\N_2))\circ s$, by the uniqueness property of folding 
in Proposition~\ref{prop:unique_folding}, we know that $\Fld_s(\N)=\Fld_s(\N_1\oplus \N_2)=\Fld_s(\N_1)\oplus \Fld_s(\N_2)$.
\end{proof}
}
\begin{definition}
    Let $\M_1$ be a summand of $\M$. A complement of $\M_1$ denoted
    $\overline{\M_1}$ is a summand so that $\M=\M_1\oplus \overline{\M_1}$. Observe that $\overline{\M_1}$ is not necessarily unique though for a given decomposition, it is \emph{uniquely identified}.
\end{definition}
\begin{definition}
For a folding $s: Q \rightarrow \Fld_s(Q)$ and 
a $Q$-module $\N$, we say $\N$
is $s$-\emph{foldable} $($or simply foldable$)$ if $\N(q)=\N(q')$ for every pair $q,q'\in Q$ where $s(q)=s(q')$.
\end{definition}

We weaken the condition of foldability to \emph{invertibility} in the following definition.
\begin{definition}
For a folding $s: Q \rightarrow \Fld_s(Q)$ and a $Q$-module $\N$, a summand $\N'$
in a decomposition $\N=\N'\oplus \N''$ is called \emph{invertible} if there exists a foldable summand $\Q$ of $\N$
so that $\N=\Q\oplus \N''$.
\end{definition}

Foldability of a summand and its complement of a module guarantees that they both remain summands after the 
folding of the module. However, we can weaken the requirement on foldability for the complement.
If foldability holds for a summand 
then it remains a summand after the folding if and only if its complement
is invertible. These are the statements of the following theorem.

\begin{theorem}
Let $\M$ be a $P$-module and $\N$ be a $Q$-module where
$\M=\Fld_s(\N)$ for some folding $s:Q\rightarrow P$. 
\begin{enumerate}
\item If $\N=\N_1\oplus\overline{\N_1}$ and both $\N_1$ and $\overline{\N_1}$ are foldable, then $\Fld_s(\N_1)$ and $\Fld_s(\overline{\N_1})$ exist and $\M=\Fld_s(\N_1)\oplus\Fld_s(\overline{\N_1})$. 
\item Conversely, if $\M=\M_1\oplus \overline{\M_1}$, then $\Fld_s^{-1}(\M_1)$ and $\Fld_s^{-1}(\overline{\M_1})$ necessarily exist and 
$\N=\Fld_s^{-1}(\M_1)\oplus \Fld_s^{-1}(\overline{\M_1})$.
\item If $\N=\N_1\oplus \overline{\N_1}$ where $\N_1$ is foldable, then
$\Fld_s({\N_1})$ necessarily exists and is a summand of $\M$ if and only if
$\overline{\N_1}$ is invertible.

\end{enumerate}
\label{thm:full-folding}
\end{theorem}
In Figure~\ref{fig:unfoldingEx3} (bottom), the gray interval is foldable
and its complement (direct sum of the red and blue intervals) is foldable. So, the gray
interval and its complement fold into summands whereas in the top figure in Figure~\ref{fig:unfoldingEx3}, none of the interval modules folds into a summand
because even if the blue one is foldable, its complement red one is neither foldable nor
invertible.
\begin{figure}[htbp]
\centerline{\includegraphics{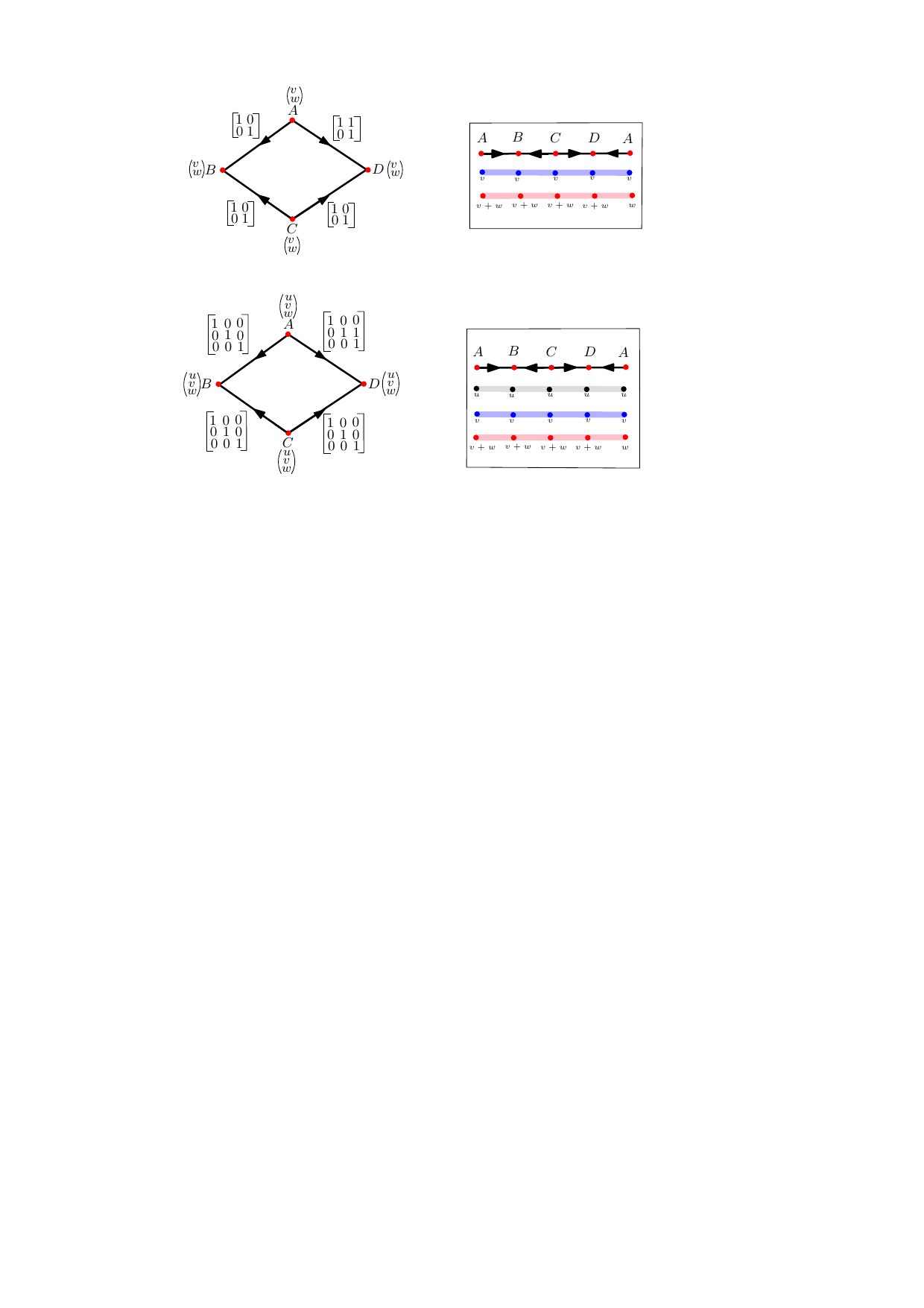}}
\caption{(left) $\M$, (right) $\M_{ZZ}$: (top) the blue interval module is foldable whereas its complement (red) is not invertible, so neither folds to a summand of $\M$; (bottom) the top gray interval module is foldable and its complement is foldable (hence invertible), so the gray interval module folds to a summand of $\M$. 
The blue interval module is foldable, but its complement is not
invertible (foldable), so it does not fold to a summand in $\M$.}
\label{fig:unfoldingEx3}
\end{figure}

The following propositions help proving Theorem~\ref{thm:full-folding}.

\begin{proposition}\label{prop:unique_folding}
For a $Q$-module $\N$ and a folding $s:Q\to P$, if $\Fld_s(\N)$ exists, then $\Fld_s(\N)$ is unique.
\label{prop:unique}
\end{proposition}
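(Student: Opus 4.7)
I would prove uniqueness by showing that any two $s$-foldings $\M_1, \M_2$ of the same $Q$-module $\N$ must agree as functors $P\to\vecsp$, both on objects and on morphisms. The key ingredients are (a) surjectivity of $s$ on points, (b) surjectivity of $s$ on the Hasse diagram, and (c) functoriality of $\M_1, \M_2$ together with the fact that in a finite poset every relation $p\leq_P q$ factors through a chain of immediate pairs.

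For equality on objects, I would pick any $p\in P$, use surjectivity of $s$ to choose $q\in Q$ with $s(q)=p$, and invoke Eq.~\eqref{eq:folding} to get $\M_1(p)=\N(q)=\M_2(p)$. Note that this is independent of the choice of $q$: if $s(q)=s(q')=p$, then the existence of either folding forces $\N(q)=\M_i(s(q))=\M_i(s(q'))=\N(q')$, so $\N$ is automatically $s$-foldable whenever an $s$-folding exists.

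For equality on morphisms, I would first handle immediate pairs $u\to v$ in $P$. By the Hasse-surjectivity clause of the definition of folding, there exists $q_1\leq_Q q_2$ with $s(q_1)=u$ and $s(q_2)=v$. Eq.~\eqref{eq:folding} then gives $\M_1(u\leq_P v)=\N(q_1\leq_Q q_2)=\M_2(u\leq_P v)$. For a general relation $p\leq_P q$ with $p\neq q$, since $P$ is finite I can pick a saturated chain $p=u_0\to u_1\to\cdots\to u_k=q$ of immediate pairs. By functoriality of both $\M_1$ and $\M_2$,
\[
\M_i(p\leq_P q)=\M_i(u_{k-1}\leq_P u_k)\circ\cdots\circ\M_i(u_0\leq_P u_1),
\]
and each factor agrees between $\M_1$ and $\M_2$ by the previous step, so $\M_1(p\leq_P q)=\M_2(p\leq_P q)$.

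The only subtlety worth flagging is that the Hasse-surjectivity in the definition of a folding only guarantees a lift $q_1\leq_Q q_2$ for immediate pairs in $P$, not for arbitrary $p\leq_P q$ in $P$ (since $s$ preserves, but need not reflect, order). That is why the argument must route through a chain of immediate pairs and rely on functoriality to glue the pieces together. Once this is set up, uniqueness is immediate; I do not anticipate a genuine obstacle beyond being careful that each chosen lift is valid and that well-definedness across different choices follows from foldability.
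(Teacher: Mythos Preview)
Your proof is correct and follows essentially the same approach as the paper: show that any two $s$-foldings $\M_1,\M_2$ agree on objects and on morphisms by pulling back along $s$ via Eq.~\eqref{eq:folding}. Your version is in fact more careful than the paper's, which writes $\N(s^{-1}(p)\leq_Q s^{-1}(p'))$ without addressing the choice of preimage; your explicit factorization through a chain of immediate pairs and appeal to Hasse-surjectivity fills in exactly the gap you correctly flag.
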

\begin{proof}
If there were two modules $\M_1$ and $\M_2$ that are $s$-foldings of $\N$, then
both $\M_1(p)=\M_2(p)=\N(s^{-1}(p))$ for every $p\in P$. Furthermore, $\M_1(p\leq_P p')=\M_2(p\leq_P p')=\N(s^{-1}(p)\leq_Q s^{-1}(p'))$ 
$\forall(p\leq_P p')$
by definition. This immediately
shows that $\M_1=\M_2$.
\end{proof}
 Proposition~\ref{prop:summand}, Proposition~\ref{prop:foldexist}, and Proposition~\ref{prop:submodule} below are used to prove Theorem~\ref{thm:full-folding} 
that characterizes modules
which fold into summand modules.
\begin{proposition}[\cite{carlsson2009zigzag}]
    Let $\N$ be a submodule of a $P$-module $\M$ where there is a submodule
    $\overline\N$ of $\M$ so that $\M(p)=\N(p)\oplus \overline{\N}(p)$ for every $p\in P$. Then, $\N$ is a summand of $\M$, that is, $\M=\N\oplus \overline{\N}$.
    \label{prop:summand}
\end{proposition}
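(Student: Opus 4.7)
The plan is to promote the pointwise direct-sum decomposition $\M(p)=\N(p)\oplus\overline{\N}(p)$ to a direct-sum decomposition at the level of $P$-modules by checking that the structure maps of $\M$ split compatibly with $\N$ and $\overline{\N}$. Since this is exactly what it means for $\N\oplus\overline{\N}\to\M$ to be a natural isomorphism, essentially all the work is already in the hypothesis; one only needs to assemble the pieces.

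First, I would recall that $\N$ and $\overline{\N}$ being submodules of $\M$ means that the inclusions $\iota_p^{\N}:\N(p)\hookrightarrow\M(p)$ and $\iota_p^{\overline{\N}}:\overline{\N}(p)\hookrightarrow\M(p)$ commute with the structure maps, i.e.\ $\M(p\leq_P q)\circ\iota_p^{\N}=\iota_q^{\N}\circ\N(p\leq_P q)$ and likewise for $\overline{\N}$. Then I would define, for each $p\in P$, the linear map
\[
\Phi_p:\N(p)\oplus\overline{\N}(p)\longrightarrow\M(p),\qquad (v,w)\mapsto\iota_p^{\N}(v)+\iota_p^{\overline{\N}}(w).
\]
By the hypothesis $\M(p)=\N(p)\oplus\overline{\N}(p)$, each $\Phi_p$ is a linear isomorphism.

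Next I would verify that the collection $(\Phi_p)_{p\in P}$ is a natural transformation from the $P$-module $\N\oplus\overline{\N}$ to $\M$, i.e.\ the square
\[
\begin{tikzcd}
\N(p)\oplus\overline{\N}(p) \arrow[r,"\Phi_p"] \arrow[d,"{\N(p\leq_P q)\oplus\overline{\N}(p\leq_P q)}"'] & \M(p) \arrow[d,"\M(p\leq_P q)"] \\
\N(q)\oplus\overline{\N}(q) \arrow[r,"\Phi_q"] & \M(q)
\end{tikzcd}
\]
commutes for every $p\leq_P q$. This is immediate by linearity once we check it on the two summands separately, and on each summand it is exactly the submodule compatibility stated above. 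Hence $\Phi=(\Phi_p)_{p\in P}$ is a natural isomorphism witnessing $\M\cong\N\oplus\overline{\N}$.

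Finally, to obtain the internal direct decomposition $\M=\N\oplus\overline{\N}$ with $\N$ and $\overline{\N}$ themselves being the summand submodules (as stipulated in the framed discussion following Theorem~\ref{thm:krull-schmidt}), I would note that $\Phi$ identifies $\N$ and $\overline{\N}$ with their images under the canonical inclusions into $\N\oplus\overline{\N}$, and these images are already the submodules $\N,\overline{\N}\subseteq\M$. The only subtlety worth mentioning is that no obstruction arises from the partial order: because the splitting already exists as submodules, we do not have to construct any projections that a priori might fail to be natural. Thus there is no real obstacle here, and the proposition follows.
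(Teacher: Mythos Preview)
Your proof is correct and is the standard argument for this fact. The paper itself does not give a proof of this proposition; it simply cites it from \cite{carlsson2009zigzag}, so there is nothing to compare against beyond noting that your argument is exactly the expected one.
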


\begin{proposition}
    Let $\N$ be a $Q$-module where $\Fld_s(\N)$ exists for some folding
    $s: Q\rightarrow P$. Then, for any submodule $\N'\subseteq \N$ that is foldable $\Fld_s(\N')$ exists. 
    \label{prop:foldexist}
\end{proposition}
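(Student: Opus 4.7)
The plan is to construct $\M' := \Fld_s(\N')$ explicitly, using the foldability of $\N'$ together with the existing folding $\M := \Fld_s(\N)$. On objects, set $\M'(p) := \N'(q)$ for any $q \in s^{-1}(p)$; this is a well-defined subspace of $\M(p) = \N(q)$ because $\N'$ is $s$-foldable (so the choice of $q$ does not matter) and $\N$ is $s$-foldable (so $\M(p)$ has a consistent meaning). On morphisms, for each $p \leq_P p'$, define $\M'(p \leq_P p')$ to be the restriction of $\M(p \leq_P p')$ to $\M'(p)$.

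The main step, and the principal obstacle, is verifying that this restriction actually lands inside $\M'(p') \subseteq \M(p')$. I would reduce to immediate pairs by decomposing $p \leq_P p'$ along a chain $p = p_0 \to p_1 \to \cdots \to p_k = p'$ in the Hasse diagram of $P$, which exists because $P$ is finite. By the definition of a folding, each immediate arrow $p_i \to p_{i+1}$ admits a witness $q_i \leq_Q q_i'$ in $Q$ with $s(q_i) = p_i$ and $s(q_i') = p_{i+1}$. Since $\N$ is the $s$-unfolding of $\M$, we have $\N(q_i \leq_Q q_i') = \M(p_i \leq_P p_{i+1})$, and because $\N' \subseteq \N$ is a submodule, this linear map sends $\N'(q_i) = \M'(p_i)$ into $\N'(q_i') = \M'(p_{i+1})$. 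Composing along the chain, $\M(p \leq_P p')$ sends $\M'(p)$ into $\M'(p')$, as required.

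Functoriality of $\M'$ is then immediate: identity morphisms are preserved, and composition is inherited from $\M$ since all $\M'$-morphisms are restrictions of $\M$-morphisms to subspaces that the $\M$-morphisms already preserve. To confirm $\M' = \Fld_s(\N')$ in the sense of Definition~\ref{def:folding}, observe that on objects $\M'(s(q)) = \N'(q)$ by construction, and for any $q \leq_Q q'$ the map $\M'(s(q) \leq_P s(q'))$ is the restriction of $\M(s(q) \leq_P s(q')) = \N(q \leq_Q q')$ to $\N'(q)$, which is exactly $\N'(q \leq_Q q')$ because $\N'$ is a submodule of $\N$. Hence $\M' \circ s = \N'$, so $\Fld_s(\N')$ exists (and by Proposition~\ref{prop:unique_folding} coincides with $\M'$). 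The whole argument is essentially bookkeeping once the immediate-pair image containment is established.
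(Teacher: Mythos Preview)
Your proof is correct and follows essentially the same approach as the paper's: construct $\Fld_s(\N')$ by setting objects via foldability of $\N'$ and morphisms via the submodule inclusion $\N'\subseteq\N$ together with the existence of $\Fld_s(\N)$. Your version is in fact more careful than the paper's on one point: you explicitly reduce an arbitrary relation $p\leq_P p'$ to a chain of immediate pairs and invoke the Hasse-surjectivity clause of the folding definition to find witnesses in $Q$, whereas the paper tacitly assumes a witness $q\leq_Q q'$ exists for every $p\leq_P p'$ without spelling out this reduction.
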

\begin{proof}
    Construct a $P$-module $\M$ as follows: First, put $\M(p)=\N'(q)$ where $p=s(q)$. This is well defined because $\N'$ is foldable. Next, put
    $\M(p\leq_P p')=\N'(q\leq_Q q')$ where $p=s(q)$ and $p'=s(q')$. This is also well defined because for every pair $q\leq_Q q'$ so that $p=s(q)$ and
    $p'=s(q')$, we have
    $\N'(q\leq_Q q')$ to be a restriction of $\N(q\leq_Q q')$ where $\Fld_s(\N)$ exists. Observe that $\M=\Fld_s(\N')$ by Definition~\ref{def:folding}.
\end{proof}
\begin{proposition}
    Let $\M$ be a $P$-module and $\N$ be a $Q$-module where
    $\M=\Fld_s(\N)$ for some folding $s:Q\rightarrow P$. For any submodule $\N'\subseteq \N$, if $\Fld_s(\N')$ exists, then it is a submodule of $\M$. Conversely, if $\M'$ is a submodule of $\M$, then $\Fld_s^{-1}(\M')$ is a submodule of $\N$.
    \label{prop:submodule}
\end{proposition}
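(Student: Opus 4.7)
The plan is to handle the two directions separately, using the defining commutativity $\M\circ s=\N$ of Definition~\ref{def:folding} in both.

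For the forward direction, suppose $\N'\subseteq \N$ is a submodule for which $\Fld_s(\N')$ exists; call it $\M'$. To show $\M'\subseteq \M$ as $P$-modules, I would first check the pointwise inclusion: for every $p\in P$ pick any $q\in s^{-1}(p)$, and observe
\[
\M'(p)=\N'(q)\subseteq \N(q)=\M(s(q))=\M(p),
\]
where the two outer equalities come from the foldings $\Fld_s(\N')=\M'$ and $\Fld_s(\N)=\M$ respectively, and the middle inclusion is because $\N'$ is a submodule of $\N$. Then, for every $p\le_P p'$, pick $q\le_Q q'$ with $s(q)=p,s(q')=p'$ (which exists by surjectivity of $s$ on the Hasse diagram, and if the pair is not immediate one chains through immediate pairs); the linear map $\M'(p\le_P p')$ equals $\N'(q\le_Q q')$, which is a restriction of $\N(q\le_Q q')=\M(p\le_P p')$. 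So $\M'$ is a submodule of $\M$.

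For the converse, given a submodule $\M'\subseteq \M$, I would define a candidate $Q$-module $\N'$ by setting $\N'(q):=\M'(s(q))$ on objects and $\N'(q\le_Q q'):=\M'(s(q)\le_P s(q'))$ on morphisms. Functoriality of $\N'$ follows from functoriality of $\M'$ combined with the fact that $s$ is order-preserving, so $s(q)\le_P s(q')$ whenever $q\le_Q q'$. By construction $\N'\circ s^{-1}$ is exactly $\M'$, so $\Fld_s(\N')=\M'$, i.e.\ $\N'=\Fld_s^{-1}(\M')$ exists. To see $\N'\subseteq \N$, note that at every $q\in Q$,
\[
\N'(q)=\M'(s(q))\subseteq \M(s(q))=\N(q),
\]
and for every $q\le_Q q'$, $\N'(q\le_Q q')=\M'(s(q)\le_P s(q'))$ is the restriction of $\M(s(q)\le_P s(q'))=\N(q\le_Q q')$, establishing the submodule inclusion.

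I do not expect any real obstacle here: the statement is essentially a bookkeeping consequence of Definition~\ref{def:folding}. The only mild subtlety is choosing representatives $q\in s^{-1}(p)$ consistently in the forward direction when verifying compatibility of the inclusion with the internal maps, but well-definedness of $\Fld_s(\N')$ (which is given as a hypothesis) ensures this choice does not matter. In the converse direction, the construction of $\N'$ does not need any hypothesis beyond the existence of $s$ and of the submodule $\M'$; the ambient folding $\M=\Fld_s(\N)$ is used only to witness the inclusion $\N'\subseteq \N$.
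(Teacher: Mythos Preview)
Your proposal is correct and follows essentially the same route as the paper's proof: both directions are handled by unwinding Definition~\ref{def:folding} to get pointwise inclusion and restriction of the structure maps. Your write-up is in fact more detailed than the paper's, which dispatches the converse in a single sentence; the only cosmetic slip is the notation ``$\N'\circ s^{-1}$'', which should simply read $\N'=\M'\circ s$ so that $\Fld_s(\N')=\M'$.
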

\begin{proof}
We have $\N'(q)\subseteq \N(q)$ for $\forall q\in Q$ and
$\N'(p \leq_Q q)$ is a restriction of $\N(p\leq_Q q)$ on $\N'(p)$ $\forall p\leq _Q q$ because $\N'$ is a submodule of $\N$. Then, by Definition~\ref{def:folding},
\begin{eqnarray*}
\Fld_s(\N')(s(q))&=&\N'(q)\subseteq\N(q)=\M(s(q))~~\forall q\in Q \mbox{ and }\\
\Fld_s(\N')(s(p)\leq_P s(q))&=&\N'(p\leq_Q q) =\N(p\leq_Q q)|_{\N'(p)}=\M(s(p)\leq_P s(q))|_{\Fld_s(\N')(s(p))},
\end{eqnarray*}
which establishes that $\Fld_s(\N')$ is a submodule of $\M$.

For the converse statement, check that $\Fld_s^{-1}(\M')$ necessarily exists and it is a submdoule of $\M$ by definition of unfolding.
\end{proof}
\begin{proof}[Proof of Theorem~\ref{thm:full-folding}]
~\\
   (1) By Proposition~\ref{prop:foldexist}, both $\Fld_s(\N_1)$ and $\Fld_s(\overline{\N_1})$ exist and
    they are submodules of $\M$ by Proposition~\ref{prop:submodule}. If we show that
    $\M(p)=\Fld_s(\N_1)(p)\oplus \Fld_s(\overline{\N_1})(p)$ for every $p\in P$, then $\M=\Fld_s(\N_1)\oplus \Fld_s(\overline{\N_1})$ due to Proposition~\ref{prop:summand}.

    We have $\M(p)=\Fld_s(\N)(p)=\Fld_s(\N_1\oplus \overline{\N_1})(p)=\Fld_s(\N_1)(p)\oplus \Fld_s(\overline{\N_1})(p)$. 

   (2) For the converse, observe that $\Fld_s^{-1}(\M_1)$ and $\Fld_s^{-1}(\overline{\M_1})$ necessarily exist and they are submodules of $\N$ by Proposition~\ref{prop:submodule}. Furthermore, $\N(q)=\Fld_s^{-1}(\M)(q)=(\Fld_s^{-1}(\M_1\oplus \overline{\M_1}))(q)= \Fld_s^{-1}(\M_1)(q)\oplus \Fld_s^{-1}(\overline{\M_1})(q)$. Then, by Proposition~\ref{prop:summand}, we have $\N=\Fld_s^{-1}(\M_1)\oplus \Fld_s^{-1}(\overline{\M_1})$. 

   (3) First, assume that $\Fld_s(\N_1)$ exists and is a summand of $\M$.
   Then, we have $\M=\Fld_s \N_1\oplus \overline{\Fld_s \N_1}$. Applying
   (2) above, we have $\N=\Fld_s^{-1}(\Fld_s\N_1)\oplus \Fld_s^{-1}(\overline{\Fld_s \N_1})
   = \N_1\oplus \Fld_s^{-1}(\overline{\Fld_s \N_1})$. Let $\Q=\Fld_s^{-1}(\overline{\Fld_s \N_1})$ which is foldable. Then,
   \begin{eqnarray}
   \N_1\oplus \overline{\N_1}=\N=\N_1\oplus \Q
   \label{eq:nq}
   \end{eqnarray}
   Since
   $\mathbb{Q}$ is foldable, we have that $\overline{\N_1}$ is invertible.

   Now, assume that $\overline{\N_1}$ is invertible. By definition, $\N=\N_1\oplus\Q$
   for some foldable summand $\Q$. Apply (1) above to
   conclude that $\Fld_s(\N_1)$ exists and is a summand of $\M$.

   \cancel{
   For contradiction,
   suppose that $\overline{\N_1}$ is not foldable and let $q$ and $q'$ be two points in $Q$ so that $s(q)=s(q')$ and 
   $\overline{\N_1}(q)\neq \overline{\N_1}(q')$. 
   
   Let $\{b_x^1,\ldots,b_x^k\}$ denote a basis for $\mathbb{U}(x)$ for any $x\in Q$.
   We can choose $\{b_q^1,\ldots, b_q^k\}$ and $\{b_{q'}^1,\ldots, b_{q'}^k\}$ so that
   $b_q^i=b_{q'}^i$ for every $i\in \{1,\ldots, k\}$ because $\mathbb{U}(q)=\mathbb{U}(q')$. 
   
   It follows from \eqref{eq:nq} that $\overline{\N_1}(x)\oplus \N_1(x)=\mathbb{U}(x)\oplus \N_1(x)$ for every $x\in Q$. Since $\N_1$ is a full interval module,
   $\dim \overline{\N_1}(x)=\dim \mathbb{U}(x)$.
    Then, we can choose a basis $\{c_q^1,\ldots, c_q^{k}\}$ for $\overline{\N_1}(q)$
    so that $c_q^i=\sum_{j=1}^k (\alpha_jb_q^j+\alpha'_j \bs_q^{\N_1})$ and
    a basis $\{c_{q'}^1,\ldots, c_{q'}^{k}\}$ for $\overline{N_1}(q')$ so that
    $c_{q'}^i=\sum_{j=1}^k (\alpha_jb_{q'}^j+\alpha'_j \bs_{q'}^{\N_1})$ for
    $\alpha_j,\alpha'_j\in \field{F}$. This means that for every
    $i\in \{1,\ldots, k\}$, $c_q^i=c_{q'}^i$ because $b_q^i=b_{q'}^i$ and
    $\bs^{\N_1}_q=\bs^{\N_1}_{q'}$ as $\N_1$ is foldable and full.
    It follows that 
    $\mathrm{span}\{c_q^1,\ldots, c_q^k\}=\mathrm{span}\{c_{q'}^1,\ldots,c_{q'}^k\}$.
    So, we have $\overline{\N_1}(q)=\overline{\N_1}(q')$ reaching a contradiction.

   To finish the proof, observe that both $\N_1$ and
   $\overline{\N_1}$ are foldable and thus $\Fld_s(\overline{\N_1})$ is a summand
   in $\M$.
   }
\end{proof}

\section{Limit and complete modules}
\subsection{Limit modules}
We introduce some special types of submodules of a module $\M$ which we call
\emph{limit modules}. These limit modules when further specialized to
zigzag modules exhibit additional properties, which we utilize in our algorithm.

\vspace{0.1in}
\begin{definition}[Limit module]
We call a submodule $\Int \subseteq \M$ a \emph{limit module} for $\M: P\rightarrow \vecsp$ if  
(i) $\Int=\oplus_i \Int^{I_i}$ where each $\Int^{I_i}$ is an interval module with $I_i\cap I_j=\emptyset$ for $i\neq j$
and (ii) for every pair $p\leq_{P} q$, the morphism $\Int (p\leq_{P} q)$ is a surjection (this allows some particular types of interval modules to be limit modules, see e.g. Eq.\eqref{eq:representative}). We call an ordered sequence of vectors $v_{p_0},\ldots,v_{p_m}$, $v_{p_i}\in \M(p_i)$, 
a \emph{limit representative} (or simply \emph{representative}) if there is
a limit module $\Int$ for $\M$ so that $\Int(p_i)$ is spanned by the vector $v_{p_i}$ for each $i\in\{0,\ldots,m\}$. Conversely, every limit module $\Int$ admits a representative
$\bs^{\Int}_{p_0},\ldots,\bs^{\Int}_{p_m}$ where $\bs^{\Int}(p_i)$ spans $\Int(p_i)$.
\label{def:limitmodule}
\end{definition}

In what follows we often say a limit module for $\M$ simply a limit
module when $\M$ is understood from the context.
The reader can observe that limit representatives are elements of $\limproj \, \M$.

The following observations about limit modules 
help understand the roles they play in the rest of the paper.

First, observe that a limit module $\Int$, in general, can be either a full interval module or a direct sum of
one or more non-overlapping interval modules such as the ones separated by the red arrows in~\eqref{eq:representative}. 

Second, observe that some of the interval modules
in a direct decomposition of $\M_{ZZ}=\bigoplus_i \Int_i$ may be limit modules.

Third, if $v_{p_0},\ldots, v_{p_m}$ is a representative of a limit
module $\Int$, then $\alpha v_{p_0},\ldots, \alpha v_{p_m}$ is also a 
representative of $\Int$ for any scalar $0\neq \alpha\in \mathbb{F}$. In regard
to this fact, we introduce the following notation.

\vspace{0.1in}
\noindent
\fbox{
\parbox{6.2in}{
   For a limit module $\Int\subseteq \M_{ZZ}$, if $\bs^{\Int}: \bs^{\Int}_{p_0},\ldots,\bs^{\Int}_{p_m}$ is a \emph{chosen} representative for $\Int$, then
   for $0\not=\alpha\in \mathbb{F}$,
   $\alpha\bs^{\Int}$ denotes the representative $\alpha \bs^{\Int}_{p_0},\ldots,\alpha\bs^{\Int}_{p_m}$.
    }
} 

\vspace{0.1in}
The reader may realize that a chosen representative of a limit module $\Int$
is an element in $\limproj \,\Int$ representing a global section of $\M_{ZZ}$. They can be added to produce other sections. This addition is given by pointwise vector addition which should not be confused with direct sums.
\begin{observation}[representative sums]
    For two limit modules $\Int$ and $\Int'$ and
    for $\alpha,\alpha'\in \mathbb{F}$, the sequence
    of vectors $(\alpha\bs^{\Int}_{p_0}+ \alpha'\bs^{\Int'}_{p_0}), \ldots,
    (\alpha\bs^{\Int}_{p_m}+ \alpha'\bs^{\Int'}_{p_m})$ is a representative. We denote this representative as the sum $\alpha\bs^{\Int}+\alpha'\bs^{\Int'}$.
    \label{obs:addition}
\end{observation}

\cancel{
\begin{definition}[Limit module sum]
    For $\alpha,\alpha'\in \mathbb{F}$,
    the sum $\alpha\Int+\alpha'\Int'$ for two limit modules
    $\Int$, $\Int'$ is a limit module with the fixed
    limit representative $(\alpha\bs_{\Int(p_0)}+\alpha'\bs_{\Int'(p_0)}),\ldots,(\alpha\bs_{\Int(p_m)}+\alpha'\bs_{\Int'(p_m)})$.
  \label{def:addition}
\end{definition}
}
The representative $\alpha\bs^{\Int}+\alpha'\bs^{\Int'}$ can be viewed
as an element in the space $\limproj\, \Int \oplus \limproj\,\Int'$ obtained
by fixing an element $\bs^\Int$ in $\limproj\, \Int$ and $\bs^{\Int'}$ in $\limproj\, \Int'$ and mapping them to the direct sum
by inclusions.

\subsection{Complete modules}
\label{sec:complete-limit}
\cancel{
Our aim is to unfold a $P$-module $\M$, $P$ being finite and connected, to a zigzag module 
$\M_{ZZ}$ defined over a zigzag poset $P_{ZZ}$ and then use Theorem~\ref{thm:full-folding} on a direct decomposition of $\M_{ZZ}$ to fold back some of its full interval modules.
Consider a direct decomposition $\M_{ZZ}=\bigoplus_i\Int_{i}$ of $\M_{ZZ}$ into
interval modules. 
Such a decomposition may not be unique because
different basis (representative)
vectors may be used to define the interval modules
over the same (multi)set of intervals. To apply Theorem~\ref{thm:full-folding}(1), the full interval modules in a decomposition of $\M_{ZZ}$ that we try to fold back
should themselves and have their complements be foldable. Our goal is to determine the maximum number of such full interval modules over all decompositions of $\M_{ZZ}$.
The following definition is introduced keeping this in mind. 
}

\begin{definition}
Let $P=\Fld_s Q$ 
and $\N: Q\rightarrow \vecsp$ be a $Q$-module so that
$\M=\Fld_s (\N)$ exists.
An interval module $\Int$ in a direct decomposition $\D:\bigoplus_i\Int_{i}$ of $\N$
is called \emph{$s$-complete} if and only if (i) $\Int$ is a full interval module that is foldable in $\D$ and (ii) its complement $\overline{\Int}$ in $\D$ is invertible. Let $\kappa(\D)$ denote the
number of $s$-complete interval modules in $\D$. We call
$\D$ \emph{$s$-complete} if $\kappa(\D)=\rk(\M)$.
\label{def:foldable}
\end{definition}

\cancel{
\begin{proposition} 
The number of $s$-complete intervals in any decomposition of $\M_{ZZ}$ cannot exceed $\rk(\M)$.
\label{prop:nomore}
\end{proposition}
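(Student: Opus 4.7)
The plan is to construct a decomposition of $\M$ exhibiting at least $\kappa := \kappa(\D)$ full interval summands, so that Theorem~\ref{thm:rk} yields $\rk(\M) \ge \kappa$. Let $\Int_{i_1},\ldots,\Int_{i_\kappa}$ be the $s$-complete intervals in $\D$, and set
\[
\N \;:=\; \bigoplus_{j=1}^{\kappa}\Int_{i_j}, \qquad \overline{\N} \;:=\; \bigoplus_{\ell\notin\{i_1,\ldots,i_\kappa\}}\Int_\ell,
\]
so $\M_{ZZ} = \N \oplus \overline{\N}$. Foldability of $\N$ is immediate: for any $q,q' \in Q$ with $s(q)=s(q')$, foldability of each $s$-complete summand gives $\N(q) = \bigoplus_j \Int_{i_j}(q) = \bigoplus_j \Int_{i_j}(q') = \N(q')$.

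The key step, and the main obstacle, is foldability of $\overline{\N}$. The plan is to exploit the direct sum structure of $\D$ to obtain, at every $q \in Q$,
\[
\overline{\N}(q) \;=\; \bigcap_{j=1}^{\kappa}\overline{\Int_{i_j}}(q),
\]
since a vector decomposes uniquely along $\{\Int_\ell(q)\}_\ell$ and lies in $\overline{\Int_{i_j}}(q) = \bigoplus_{\ell \ne i_j}\Int_\ell(q)$ exactly when its $\Int_{i_j}(q)$-component vanishes. Each $\Int_{i_j}$ being $s$-complete, each $\overline{\Int_{i_j}}$ is foldable, and pointwise intersections of foldable subspaces are foldable, so $\overline{\N}$ is foldable. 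The subtlety here is that abstract foldability of $\N$ together with that of the ambient $\M_{ZZ}$ would not suffice to conclude foldability of a complement (complements are not unique), but the fixed decomposition $\D$ pins $\overline{\N}$ down so that the intersection identity holds on the nose.

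Having established foldability of both $\N$ and $\overline{\N}$, Theorem~\ref{thm:full-folding}(1) gives $\M = \Fld_s(\N) \oplus \Fld_s(\overline{\N})$. I would then iteratively apply Theorem~\ref{thm:full-folding}(1) inside $\N$: each $\Int_{i_k}$ is foldable, and its complement $\bigoplus_{j \ne k}\Int_{i_j}$ in $\N$ is foldable as a sum of foldables, yielding $\Fld_s(\N) = \bigoplus_{k=1}^{\kappa}\Fld_s(\Int_{i_k})$. Since each $\Int_{i_k}$ is a full interval with one-dimensional fibers connected by identity-like maps along $P_{ZZ}$, its folding is the full interval module of $\M$. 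Thus $\M$ admits a decomposition with at least $\kappa$ full interval summands, and Theorem~\ref{thm:rk} concludes $\rk(\M) \ge \kappa$.
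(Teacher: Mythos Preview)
Your proof is correct and follows the same strategy as the paper: fold the $s$-complete intervals to full interval summands of $\M$ and invoke Theorem~\ref{thm:rk}. The paper's argument is terser---it applies Theorem~\ref{thm:full-folding}(1) to each $s$-complete interval separately and then asserts that $\M$ would have more than $\rk(\M)$ full interval summands---leaving implicit the passage from ``each is individually a summand'' to ``all appear in a single decomposition.'' Your version makes this step explicit: the pointwise identity $\overline{\N}(q)=\bigcap_{j}\overline{\Int_{i_j}}(q)$ together with foldability of each $\overline{\Int_{i_j}}$ shows the joint complement $\overline{\N}$ is foldable, so a single application of Theorem~\ref{thm:full-folding}(1) yields a decomposition of $\M$ containing all $\kappa$ full intervals simultaneously. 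This is a genuine tightening of the paper's brief justification, though the underlying idea is the same.
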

\begin{proof}[Proof of Proposition~\ref{prop:nomore}]
Consider any decomposition of $\M_{ZZ}$ and let $r$ be the number of $s$-complete intervals in it. By definition, each $s$-complete interval module $\Int$ satisfies the requirement
in Theorem~\ref{thm:full-folding} and thus folds into a full interval summand of $\M$.
Then, by Theorem~\ref{thm:rk}, $r \leq \rk(\M)$.
\end{proof}
}
\cancel{
A simple fact about $s$-complete interval modules follows from the definition:
\begin{fact}
(ii) A full interval module $\Int$ in a direct decomposition of $\M_{ZZ}$ is $s$-complete iff 
$\Int(p)=\Int(p')$ and $\overline\Int(p)=\overline\Int(p')$ for every pair of points $p,p'\in P_{ZZ}$ where $s(p)=s(p')$.
\label{fact:fullsection}
\end{fact}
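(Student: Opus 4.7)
The plan is to recognize that this fact is essentially a pointwise unpacking of Definition~\ref{def:foldable} combined with the definition of $s$-foldability stated immediately before Theorem~\ref{thm:full-folding}. Since the hypothesis already supplies clause (i) of Definition~\ref{def:foldable} (that $\Int$ is a full interval module), the only thing I need to match up is clause (ii) — $s$-foldability of both $\Int$ and $\overline{\Int}$ — against the claimed pointwise equalities of subspaces.

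For the forward direction I would assume $\Int$ is $s$-complete and read off directly from Definition~\ref{def:foldable}(ii) that both $\Int$ and $\overline{\Int}$ are $s$-foldable. Applying the defining condition of $s$-foldability to each of these submodules yields $\Int(p)=\Int(p')$ and $\overline{\Int}(p)=\overline{\Int}(p')$ for every pair $p,p'\in P_{ZZ}$ with $s(p)=s(p')$, which is exactly the right-hand side of the equivalence. For the converse I would start from those pointwise equalities and observe that they are literally the $s$-foldability condition applied to $\Int$ and $\overline{\Int}$; combined with the full-interval hypothesis on $\Int$, both clauses (i) and (ii) of Definition~\ref{def:foldable} are now satisfied, so $\Int$ is $s$-complete.

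The one place where a small argument is required, and the step I expect to be the main (though minor) obstacle, is confirming that for a submodule of $\M_{ZZ}$ the pointwise equality of vector spaces at identified points genuinely suffices for $s$-foldability, i.e.\ that no additional compatibility on restricted internal morphisms has to be verified. Here I would appeal to the standing hypothesis that $\M_{ZZ}$ itself folds to $\M$: the morphisms of $\M_{ZZ}$ on any two immediate pairs $p\leftrightarrow q$ and $p'\leftrightarrow q'$ with $s(p)=s(p')$ and $s(q)=s(q')$ already coincide. Restricting these equal ambient morphisms to the equal one-dimensional subspaces $\Int(p)=\Int(p')$ and $\Int(q)=\Int(q')$ forces the induced maps on $\Int$ to agree under $s$, and the same reasoning applies verbatim to $\overline{\Int}$. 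This reduces foldability of $\Int$ and $\overline{\Int}$ to the vector-space condition, completing the equivalence.
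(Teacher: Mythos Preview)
Your proposal is correct and matches the paper's approach: the paper presents this fact as following immediately from Definition~\ref{def:foldable} together with the definition of $s$-foldability, which is exactly the definitional unpacking you carry out. Your third paragraph is unnecessary, however, since $s$-foldability is \emph{defined} purely by the pointwise vector-space condition $\N(q)=\N(q')$ whenever $s(q)=s(q')$; no morphism compatibility needs to be checked for foldability itself (that argument belongs instead to Proposition~\ref{prop:foldexist}, where one shows that foldability of a submodule implies its fold exists).
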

}

Theorem~\ref{thm:full-folding} helps us to prove the following proposition which guarantees that a foldable module $\N$ has a $s$-complete decomposition. Additionally, it states that no direct decomposition of $\N$ can have
more $s$-complete interval modules than a $s$-complete decomposition can have. 

\begin{proposition}
Let $P=\Fld_s Q$ be a folded poset of a finite poset $Q$.
Let $\N: Q\rightarrow \vecsp$ be a $Q$-module so that the $s$-folding
$\M=\Fld_s (\N)$ exists. Then, a $s$-complete decomposition of $\N$ exists. Furthermore, any direct decomposition $\D$ of $\N$ has $\kappa(\D)\leq \rk(\M)$.
\label{prop:complete-rank}
\end{proposition}
\begin{proof}
First, we prove the second conclusion. If a direct decomposition $\D$ of $\N$ had $\kappa(\D)>\rk(\M)$, then $\D$ would have more than $\rk(\M)$ $s$-complete interval modules as its summand each of which would fold to a full interval summand of $\M$ (Theorem~\ref{thm:full-folding}(3)). This is not possible because in that case $\M$ would have more than $\rk(\M)$ summands that are full intervals, an impossibility according to Theorem~\ref{thm:rk}.

Next, we show the first conclusion.
Consider a direct decomposition
$\M=\Int_1\oplus\cdots\oplus\Int_r\oplus \M'_1\oplus\cdots\oplus\M'_k$ where $\Int_1,\ldots,\Int_r$
are full interval modules. By Theorem~\ref{thm:rk}, $r=\rk(\M)$. Then, there is 
a decomposition $\D: \Fld_s^{-1}(\Int_1)\oplus\cdots\oplus\Fld_s^{-1}(\Int_r)\oplus \Fld_s^{-1}(\M'_1)\oplus\cdots\oplus\Fld_s^{-1}(\M'_k)$ of $\N$ by Theorem~\ref{thm:full-folding}(2). Furthermore, each of $\Fld_s^{-1}(\Int_i)$, $1\leq i\leq r$, is a full interval module because each $\Int_i$ is so. By definition, both $\Fld_s^{-1}(\Int_i)$ and its complement are foldable (hence also invertible). Therefore, each $\Fld_s^{-1}(\Int_i)$ is $s$-complete. A direct decomposition $\D'$ (into indecomposables) that refines $\D$
is an $s$-complete decomposition of $\N$ because it has $r=\rk(\M)$ $s$-complete
interval modules and it cannot have any more $s$-complete interval modules as $\kappa(\D)\leq \rk(\M)$. 
\end{proof}

\section{Zigzag module}
\label{sec:zigzag}
\begin{definition}
A poset $P$ is called a \emph{zigzag} poset iff there is a 
linear ordering $p_0,\ldots,p_m$
of the points in $P$, called the \emph{zigzag} path, so that 
for $i\in \{0,1,\ldots, m-1\}$, $p_i\leftrightarrow p_{i+1}$ are the only and all immediate pairs in $P$, i.e., zigzag path represents the Hasse diagram of $P$. We write $[p_i,p_j]$ to denote an interval
$I\subseteq P$ with the zigzag path $p_i,p_{i+1},\cdots, p_j$.
\end{definition}

\begin{definition}
A zigzag module $\M_{ZZ}: P_{ZZ}\rightarrow \vecsp$ is a persistence module where
the poset $P_{ZZ}$ is a zigzag poset. Assuming that 
$p_0,p_1,\ldots, p_m$ is the zigzag path for $P_{ZZ}$, we write the zigzag module as:
\begin{eqnarray}
\M_{ZZ}: V_{p_0}\stackrel{\phi_0}{\longleftrightarrow}\ldots \stackrel{\phi_{i-1}}{\longleftrightarrow}V_{p_i}\stackrel{\phi_i}{\longleftrightarrow} V_{p_{i+1}}\stackrel{\phi_{i+1}}{\longleftrightarrow}\ldots\stackrel{\phi_{m-1}}{\longleftrightarrow} V_{p_m}
\label{zigzag-module}
\end{eqnarray}
where for $i\in \{0,1,\ldots,m-1\}$, $V_{p_i}=\M_{ZZ}(p_i)$ denote the vector spaces and $\phi_i=\M_{ZZ}(p_i\leq_{P_{ZZ}} p_{i+1})$ or
$\M_{ZZ}(p_{i+1}\leq_{P_{ZZ}}p_i)$ denote
the morphisms (linear maps).
\end{definition}
We will be interested in interval submodules $\Int^{[b_i,d_i]}$ of $\M_{ZZ}$. Such an interval module is either full or can be of four types
determined by the types of its end points. The point $b_i$ is called
closed if $b_i\not=0$ and the arrow between $b_{i-1},b_i$ is a forward arrow `$\rightarrow$' and is called open otherwise. Similarly, the point $d_i$ is called closed if
$d_i\not=m$ and the arrow between $d_{i}, d_{i+1}$ is a backward arrow and is called open otherwise. The interval module $\Int^{[b_i,d_i]}$ is called open-open, open-closed, closed-open, or closed-closed depending on whether $b_i$ and $d_i$ are both open,
$b_i$ is open and $d_i$ closed, $b_i$ is closed and $d_i$ is open, or
both $b_i$ and $d_i$ are closed respectively. Notice that a full interval module
is considered open-open by definition.

By Theorem~\ref{thm:krull-schmidt}, a zigzag module $\M_{ZZ}:P_{ZZ}\rightarrow \vecsp$ over a  finite zigzag poset $P_{ZZ}$ also decomposes uniquely into indecomposables. By quiver theory~\cite{Gabriel72,Oudot15}, these indecomposables are interval modules, that is,
$\M_{ZZ}= \Int_1\oplus\cdots \oplus \Int_k$ where each $\Int_i:=\Int^{I_i}$ is an interval module
defined over an interval $I_i:=[p_{b_i},p_{d_i}]$. 

We will be interested in limit modules in direct decompositions of zigzag modules.
Observe that limit modules for a zigzag module $\M_{ZZ}$ are dierct
sum of open-open interval modules and take the following form.
\begin{eqnarray}
 0\leftrightarrow\cdots {\color{red}{0\leftarrow}} {\color{blue} \Int({p_{b_i}}) \leftrightarrow\cdots\leftrightarrow \Int({p_{d_i}})}{\color{red}\rightarrow 0} \leftrightarrow  \cdots \leftrightarrow {\color{red} 0 \leftarrow}{\color{blue} \Int({p_{b_j}}) \leftrightarrow\cdots\leftrightarrow \Int({p_{d_j}})}{\color{red}\rightarrow 0}\leftrightarrow \cdots \leftrightarrow 0
\label{eq:representative}
\end{eqnarray}

The following proposition says that in a sense any representative of
a full interval submodule of $\M_{ZZ}$ is in the
span of the representatives of the limit modules present in any direct decomposition of $\M_{ZZ}$. This property does not extend beyond zigzag module. The persistence
module $\M$ shown in Figure~\ref{fig:unfoldingEx3}(top left) has a submodule
which is a full interval module. However, it cannot be expressed as a direct sum
of any limit modules in a direct decomposition of $\M$ because $\M$ itself
is indecomposable and is not even an interval module. Also, see Conclusions (section~\ref{sec:conclusion})
for further comments on this aspect.
\begin{proposition}
    Let $L$ be the set of limit modules in a direct decomposition $\D$ of
    $\M_{ZZ}$. For any full interval module $\Int\subseteq \M_{ZZ}$,
    there exist unique $\alpha_i\in \mathbb{F}$ so that 
    $\bs^\Int=\sum_{\Int_i\in L} \alpha_i\bs^{\Int_i}$ where the sum is defined
    as in Observation~\ref{obs:addition}.
    \label{prop:limit-module}
\end{proposition}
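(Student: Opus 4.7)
The plan is to interpret every representative as an element of $\limproj\, \M_{ZZ}$ and then exploit the fact that finite limits in $\vecsp$ commute with finite direct sums. By Definition~\ref{dfn:rep-seq}, a representative $v_{p_0},\ldots,v_{p_m}$ is precisely a tuple in $\prod_i \M_{ZZ}(p_i)$ satisfying the cone condition at every arrow $\phi_i$, so it is an element of $\limproj\, \M_{ZZ}$. Via the inclusions $\Int\hookrightarrow \M_{ZZ}$ and $\Int_i\hookrightarrow \M_{ZZ}$, both $\bs^{\Int}$ and the $\bs^{\Int_i}$ for $\Int_i\in L$ sit naturally inside $\limproj\, \M_{ZZ}$, and the representative sum of Observation~\ref{obs:addition} coincides with ordinary vector addition there.

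Next I would use the isomorphism $\limproj\, \M_{ZZ}\cong \bigoplus_i \limproj\, \Int_i$ coming from the decomposition $\M_{ZZ}=\bigoplus_i \Int_i$ and run a short case analysis to identify which summands contribute. If $\Int_i$ has a closed endpoint, then at the slot adjacent to that endpoint $\Int_i$ has value $0$, and the cone compatibility with the arrow into (or out of) that slot forces the value of any limit element at the endpoint to vanish; this propagates across the whole support of $\Int_i$, giving $\limproj\, \Int_i=0$. If instead $\Int_i$ is full or open-open, the boundary arrows point outward from $\Int_i$'s support and impose no constraint, so $\limproj\, \Int_i\cong \mathbb{F}$ with the chosen $\bs^{\Int_i}$ as basis. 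Thus $\limproj\, \M_{ZZ}=\bigoplus_{\Int_i\in L}\mathbb{F}\cdot \bs^{\Int_i}$.

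Since $\bs^{\Int}$ is itself an element of $\limproj\, \M_{ZZ}$, it has unique coordinates $\alpha_i\in\mathbb{F}$ in this basis, which immediately gives $\bs^{\Int}=\sum_{\Int_i\in L}\alpha_i \bs^{\Int_i}$ uniquely, where the sum is the representative sum of Observation~\ref{obs:addition}. The main delicate point is the case analysis verifying that any closed endpoint genuinely forces $\limproj\, \Int_i=0$, so that $L$ indexes a basis of $\limproj\, \M_{ZZ}$; once this is in place, the rest of the argument is routine direct-sum bookkeeping.
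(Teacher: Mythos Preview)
Your proof is correct and takes a more conceptual route than the paper. The paper works pointwise: at each $p\in P_{ZZ}$ it expresses $\bs^{\Int}_p$ in the basis $\{\bs^{\Int_i}_p\}_i$ of $\M_{ZZ}(p)$, verifies that the resulting coefficients agree across the support of each $\Int_i$, and then argues by contradiction that any $\Int_i$ with a closed endpoint and nonzero coefficient would force $\Int$ to violate the limit-representative condition at that endpoint (via a cokernel argument). You instead identify representatives with elements of $\limproj\,\M_{ZZ}$, use that $\limproj$ commutes with finite biproducts to obtain $\limproj\,\M_{ZZ}\cong\bigoplus_i\limproj\,\Int_i$, and compute each $\limproj\,\Int_i$ directly from the arrow directions at the endpoints. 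Your approach is cleaner and makes transparent why the name \emph{limit module} is apt---these are exactly the interval summands with nonzero limit---while the paper's approach is more elementary and avoids invoking any categorical preservation property, at the cost of a slightly more ad hoc endpoint argument.
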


\begin{proof}
    Let $\Int_1,\ldots,\Int_k$ be the set of all interval modules in the
    direct decomposition $\D$ of the $P_{ZZ}$-module $\M_{ZZ}$. For any $p\in P_{ZZ}$, since $\M_{ZZ}(p)=\text{span}(\bs^{\Int_1}_p,\cdots,\bs^{\Int_k}_p)$, there exist uniquely determined
    $\alpha^p_i\in\mathbb{F}$ so that
    $\bs^{\Int}_p=\sum_i \alpha^p_i \bs^{\Int_i}_{p}$ where
    $\alpha_i^p$ is taken to be zero if $\bs^{\Int_i}_{p}=0$. Let $L_p=\{\Int_i\,|\, \alpha^p_i\not=0\}$
    and $L'=\cup_p L_p$. It is not difficult to show that
    $\alpha^p_i=\alpha^q_i:=\alpha_i$ for any two points $p,q$
    in the support of $\Int_i$.
    Then, we can write $L'=\{\Int_i \,|\, \alpha_i\not = 0\}$. We claim that
    $L'$ is a subset of limit modules (open-open) in $\D$, that is, $L'\subseteq L$. If not, there is an interval module $\Int'\in L'$ that is not
    a limit module, that is, $\Int'$ has an end point $p_j\not \in \{p_0, p_m\}$ satisfying either of the following two cases: (i) the arrow for $\phi_{j-1}$ is forward and the cokernel of $\phi_{j-1}$ restricted to $\Int'(p_{j})$ is non-zero.
    It follows that the cokernel of $\phi_{j-1}$ restricted to $\Int(p_{j})$ is also non-zero. This
    is impossible as $\Int$ is a full interval module and $p_j\not\in\{p_0,p_m\}$, (ii) the arrow for $\phi_{j}$ is backward: again, we reach an impossibility with a similar argument. So, $L'\subseteq L$ and it follows that $\bs^\Int=\sum_{\Int_i\in L}\alpha_i\bs^{\Int_i}$ where $\alpha_i=0$ for $\Int_i\in L\setminus L'$ establishing the claim of the proposition. 
\end{proof}

\cancel{
    \begin{theorem}
        Let $\tilde M=\Fld_s M$ be a folded module of a $P$-module $M$ with a folding $s$.
        Two $\tilde M$-vectors $\tilde u$ and $\tilde v$ are indeed vectors
        for an indecomposable module of $M$ if and only if:
        \begin{itemize}
            \item there exist an $N$-vector $u$ and an $N'$-vector $v$ where $N$ and $N'$
            are indecomposable modules of $M$ so
            that $\tilde u=s(u)$ and $\tilde v=s(v)$, and either $N=N'$, or there
            are $N$- and $N'$-vectors $u'$ and $v'$ respectively with $s(u')=s(v')$.
        \end{itemize}
    \end{theorem}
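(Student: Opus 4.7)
The plan is to prove both directions of the biconditional by exploiting the interaction between the Krull-Schmidt decomposition of $M$ and that of $\tilde M = \Fld_s M$ established in Theorem~\ref{thm:full-folding}. The reading I adopt is that ``$\tilde u$ and $\tilde v$ are vectors for an indecomposable module of $M$'' means both $\tilde u$ and $\tilde v$ lie in the image under $s$ of a single indecomposable summand of $\tilde M$, equivalently in the folded image of what one might call a ``fold-equivalence class'' of indecomposable summands of $M$. The argument then amounts to identifying those equivalence classes.

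First I would set up the following auxiliary relation on indecomposable summands of $M$ in a fixed Krull-Schmidt decomposition $M=\bigoplus_i N_i$: declare $N \sim N'$ whenever there exist vectors $u'\in N$ and $v'\in N'$ with $s(u')=s(v')$. Using Theorem~\ref{thm:full-folding}(1)--(2), I would show that the induced decomposition $\Fld_s(M)=\bigoplus_i \Fld_s(N_i)$, which exists only formally since individual summands $N_i$ need not be foldable, collapses precisely along $\sim$: each $\sim$-class $C$ yields a summand $\Fld_s(\bigoplus_{N_i\in C}N_i)$ of $\tilde M$, and no finer decomposition into summands of $\tilde M$ is possible because any further splitting would, via Theorem~\ref{thm:full-folding}(2), unfold to a splitting of $\bigoplus_{N_i\in C}N_i$ that separates two $\sim$-related indecomposables, contradicting the definition of $\sim$.

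For the forward direction, suppose $\tilde u,\tilde v$ both come from one indecomposable summand $\tilde N$ of $\tilde M$. By Theorem~\ref{thm:full-folding}(2) applied to $\tilde N\oplus\overline{\tilde N}=\tilde M$, we have $\Fld_s^{-1}(\tilde N)=\bigoplus_{N_i\in C}N_i$ for some collection $C$ of indecomposable summands of $M$. Lifting along $s$ gives $u\in N$ and $v\in N'$ with $N,N'\in C$ and $s(u)=\tilde u$, $s(v)=\tilde v$. If $N\ne N'$, the fact that $\tilde N$ is indecomposable forces $N$ and $N'$ to lie in one $\sim$-class, which supplies the required $u',v'$ by definition. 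Conversely, given the data in the backward hypothesis, the relation $N\sim N'$ holds (trivially when $N=N'$, and by the $u',v'$ witnesses otherwise), so $N$ and $N'$ belong to a common $\sim$-class $C$, whence $\Fld_s(\bigoplus_{N_i\in C}N_i)$ is an indecomposable summand of $\tilde M$ containing both $\tilde u=s(u)$ and $\tilde v=s(v)$.

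The hard part will be the correspondence lemma between $\sim$-classes and indecomposable summands of $\tilde M$. The subtlety is that $\sim$ as literally stated in the theorem is only a one-step condition, while the structural argument naturally produces its transitive closure; one must either verify that a single step already suffices in the situations arising from the unfolding/folding construction of Section~\ref{sec:folding-unfolding}, or reformulate the condition so the equivalence is taken transitively. A secondary technicality is that Theorem~\ref{thm:full-folding}(1) requires foldability of the summand \emph{and} its complement, so one must argue that the partial unions $\bigoplus_{N_i\in C}N_i$ and their complements are foldable; this should follow because any two vectors identified by $s$ either lie in the same $\sim$-class (hence are handled inside the union) or in distinct classes (hence contribute to two separate unions in a manner compatible with foldability of both sides).
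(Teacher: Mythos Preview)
This statement sits inside a \verb|\cancel{...}| block in the paper, and the paper defines \verb|\cancel| to discard its argument; the theorem was excised from the final text and the paper contains no proof of it. There is therefore no ``paper's own proof'' to compare your attempt against.

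On the merits of your sketch: the correspondence lemma you need---that $\sim$-classes of indecomposable summands of $M$ are in bijection with indecomposable summands of $\tilde M$---is not a consequence of Theorem~\ref{thm:full-folding}. Part~(2) of that theorem tells you that a summand of $\tilde M$ unfolds to a summand of $M$, but not that this summand is a union of indecomposables taken from your \emph{fixed} Krull--Schmidt decomposition of $M$; uniqueness in Krull--Schmidt is only up to automorphism, so $\Fld_s^{-1}(\tilde N)$ need not equal any $\bigoplus_{N_i\in C}N_i$. In the other direction, even when $\bigoplus_{N_i\in C}N_i$ and its complement are both foldable, nothing in Theorem~\ref{thm:full-folding} forces the fold to be indecomposable. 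You already flag the one-step-versus-transitive-closure issue in the statement of $\sim$, but the structural gap is larger than that: the dictionary you are reaching for between indecomposables upstairs and downstairs does not hold at this generality, which is presumably why the authors abandoned the statement.
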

\begin{theorem}
Let $\Fld_s M$ be a folded module of a $P$-module $M$ with a folding $s$. Let
$M=M_1\oplus\cdots\oplus M_k$ and $\Fld_s M=\tilde{M}_1\oplus\cdots\oplus\tilde{M}_\ell$
be the decompositions of $M$ and $\Fld_s M$ respectively where each $M_i$ and $\tilde{M}_i$
is an indecomposable module. Then, $\forall i\in\{1,\ldots,\ell\}$,
$$
\tilde{M}_i= \Fld_s (M_{i_1}\oplus\cdots\oplus M_{i_t}) \mbox{ where }
\{i_1,\ldots,i_t\}\subseteq  \{1,\ldots, k\}.
$$
\end{theorem}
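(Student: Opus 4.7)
I would prove the three parts in sequence; Parts (1) and (2) are largely structural, while Part (3) carries the real content.

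\emph{Part (1).} Since $\N_1$ and $\overline{\N_1}$ are both foldable, set $\Fld_s(\N_1)(p):=\N_1(q)$ for any $q\in s^{-1}(p)$ (well-defined by foldability), and analogously for $\overline{\N_1}$; morphisms are handled by lifting each immediate pair $u\to v$ in $P$ via the Hasse-surjectivity of $s$ and using the corresponding morphism of $\N_1$ or $\overline{\N_1}$, which is again well-defined by foldability. At every $p\in P$, choosing any $q\in s^{-1}(p)$ yields $\M(p)=\N(q)=\N_1(q)\oplus\overline{\N_1}(q)=\Fld_s(\N_1)(p)\oplus\Fld_s(\overline{\N_1})(p)$, and the morphisms split block-diagonally because they do so at the $Q$-level. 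So $\M=\Fld_s(\N_1)\oplus\Fld_s(\overline{\N_1})$.

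\emph{Part (2).} Pre-composition with the functor $s$ preserves direct sums of functors, so $\N=\M\circ s=(\M_1\oplus\overline{\M_1})\circ s=(\M_1\circ s)\oplus(\overline{\M_1}\circ s)$. Each factor is automatically foldable: if $s(q)=s(q')$, then $(\M_i\circ s)(q)=\M_i(s(q))=\M_i(s(q'))=(\M_i\circ s)(q')$, and the same holds on morphisms. So $\Fld_s^{-1}(\M_i):=\M_i\circ s$ exists and provides the claimed unfolded decomposition of $\N$.

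\emph{Part (3).} The task is to show that $\overline{\N_1}$ is foldable; Part (1) applied to $\N=\N_1\oplus\overline{\N_1}$ will then immediately give $\M=\Fld_s(\N_1)\oplus\Fld_s(\overline{\N_1})$. Let $\pi:\N\to\N$ be the idempotent that projects onto $\N_1$ along $\overline{\N_1}$; foldability of $\overline{\N_1}$ amounts to $\pi(q)=\pi(q')$ as linear maps on $\N(q)=\N(q')=\M(s(q))$ whenever $s(q)=s(q')$. Using that $\N_1$ is a foldable full interval module, I fix a coherent family $\{v_q\in\N_1(q)\}_{q\in Q}$ with $v_q=v_{q'}$ when $s(q)=s(q')$ and $\phi(v_q)=v_{q'}$ along every arrow of $Q$ (possible because the internal morphisms of $\N_1$ are isomorphisms). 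Then $\pi(q)$ is encoded by a single functional $\alpha_q\in\N(q)^*$ via $\pi(q)(u)=\alpha_q(u)\,v_q$, normalised so that $\alpha_q(v_q)=1$, and naturality of $\pi$ along each arrow of $Q$ translates into the cocycle relations $\alpha_q=\alpha_{q'}\circ\phi$ (forward arrow) or $\alpha_{q'}=\alpha_q\circ\phi$ (backward arrow).

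The crucial step then uses the hypothesis. Since $\Fld_s(\N_1)$ is a summand of $\M$, fix a decomposition $\M=\Fld_s(\N_1)\oplus\M'$; the corresponding $\M$-level functionals $\tilde\alpha_p\in\M(p)^*$ pull back to a \emph{foldable} family $\alpha^\sharp_q:=\tilde\alpha_{s(q)}$ that satisfies the same cocycle relations and the same normalisation $\alpha^\sharp_q(v_q)=1$. I would then study the difference $\beta_q:=\alpha_q-\alpha^\sharp_q$: it vanishes on every $v_q$ (hence on $\N_1$) and satisfies the homogeneous version of the same cocycle identities along any zigzag walk of $Q$ connecting $q$ and $q'$ with $s(q)=s(q')$. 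The main obstacle is showing $\beta\equiv 0$: this is where the full-interval assumption is essential, since the restrictions of the $Q$-morphisms to $\N_1$ are invertible and the summand condition on $\Fld_s(\N_1)$ provides exactly the compatibility needed to propagate the vanishing of $\beta$ along the walk and force $\alpha_q=\alpha^\sharp_q$ for every $q$. Once established, this gives $\pi(q)=\pi(q')$ when $s(q)=s(q')$, proving $\overline{\N_1}$ is foldable and completing Part (3).
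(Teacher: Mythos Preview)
Your proposal does not address the stated theorem. The displayed statement asserts that each indecomposable $\tilde M_i$ of the folded module equals the fold of some sub-sum $M_{i_1}\oplus\cdots\oplus M_{i_t}$ of indecomposables of $M$; in the paper this statement appears only inside a cancelled block (it is removed from the compiled text and carries no proof). What you actually outline is the three-part Theorem~\ref{thm:full-folding}, a different result. Nothing in your write-up speaks to the indecomposable-partitioning claim above.

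If one reads your sketch as an attempt at Theorem~\ref{thm:full-folding} instead, Parts~(1) and~(2) match the paper's argument. For Part~(3), however, you leave the decisive step open: you write that ``the main obstacle is showing $\beta\equiv 0$'' and then stop. In fact $\beta=\alpha-\alpha^\sharp$ need not vanish identically, since distinct complements of $\N_1$ give distinct projections; so your stated target is too strong, and even the correct target $\alpha_q=\alpha_{q'}$ whenever $s(q)=s(q')$ does not follow from the cocycle relations alone, because transport of $\alpha$ along two different $Q$-walks ending at $q$ and $q'$ has no a~priori reason to agree. The paper closes exactly this gap by a different device: it applies Part~(2) to manufacture a second, automatically foldable complement $\mathbb{U}:=\Fld_s^{-1}\bigl(\overline{\Fld_s(\N_1)}\bigr)$ of $\N_1$, and then invokes Azumaya--Krull--Remak--Schmidt to compare the two complements, arguing that $\overline{\N_1}$ differs from $\mathbb{U}$ only by adding the foldable $\N_1$ into summands of the foldable $\mathbb{U}$, hence is itself foldable. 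Your argument lacks an analogue of this step and is therefore incomplete.
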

}

Proposition~\ref{prop:limit-module} suggests the following approach 
to compute the generalized rank of a given $P$-module $\M$:
first unfold $P$ into a zigzag poset (path) $P_{ZZ}$ and construct a zigzag module
$\M_{ZZ}$ that is an $s$-unfolding of $\M$. It follows that
$\Fld_s(\M_{ZZ})$ exists and $\M=\Fld_s(\M_{ZZ})$ as required
by Proposition~\ref{prop:limit-module}. Then, after computing a direct decomposition
of $\M_{ZZ}$ with a zigzag persistence algorithm, convert it to an $s$-complete decomposition
and determine how many 
full interval modules (if any) in this decomposition
are $s$-complete. 

\begin{figure}[htbp]
\centerline{\includegraphics{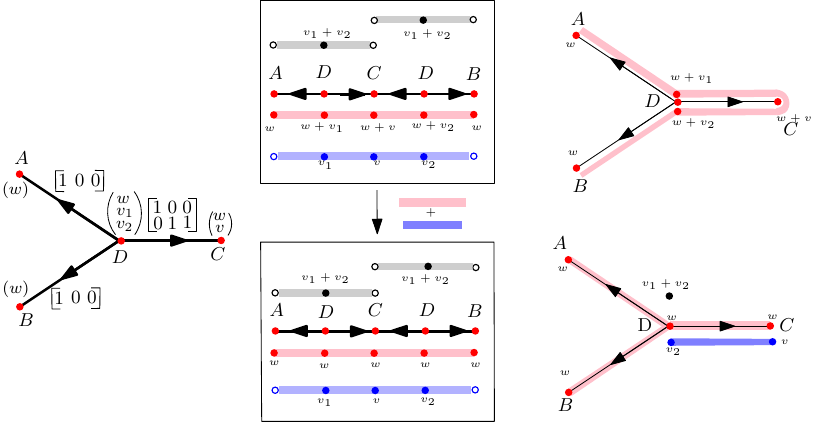}}
\caption{The full red interval module in $\M_{ZZ}$ as shown in top-middle does not
fold to a full interval submodule of $\M$ because the vectors at $D$ mismatch (top-right), but after adding the blue interval module
(limit module), it is converted to a full interval module as shown in bottom-middle
that folds back to a full interval module (pink) that is a summand of $\M$ (bottom-right).}
\label{fig:unfoldingEx2}
\end{figure}
Consider the $P$-module $\M$ shown in Figure~\ref{fig:unfoldingEx2} (left). After unfolding
the module to a zigzag module (middle), suppose we get a decomposition into
interval modules as indicated in the middle-top picture. Just like the example
in Figure~\ref{fig:unfoldingEx} (bottom), the full interval module in this decomposition of $\M_{ZZ}$
does not fold into a full interval submodule of $\M$ because the representative vectors at the two copies
of $D$ do not match. In the example of Figure~\ref{fig:unfoldingEx}, we could not
repair this deficiency. However, now we can do so using the limit modules. 
Observe that the open-open interval module (blue) supported
on $D\rightarrow C \leftarrow D$ is
a limit module. We can add its representative
to the representative of the full interval module to obtain a new
full interval module shown in middle-bottom picture. This new
module is complete because it and its complement
are foldable and thus the module folds into a full interval summand of $\M$ (Theorem~\ref{thm:full-folding}(1)).
Observe that any of the other two limit modules (grey) could also serve the purpose. 
The algorithm {\sc GenRank} in section~\ref{sec:algorithm} essentially determines whether
a full interval module $\Int$ in the current decomposition of $\M_{ZZ}$ is foldable,
and if not, whether it can be \emph{converted} to one by adding the chosen representatives of a set of limit modules. This is followed up by a check that determines if the complement of the
module $\Int$ is invertible. If so, it marks converted $\Int$ to be complete.

It is instructive to point out that the blue module which is not a full module can also be made foldable
by adding the representative of one of the grey modules, say the left one, to its representative, which has been done to obtain the decomposition shown on bottom right in Figure~\ref{fig:unfoldingEx2}.
Our algorithm does not do this because we are interested only on folding full interval modules.

\subsection{Unfolding to a zigzag path and zigzag module}
\label{sec:unfold}
A finite poset $P$ is represented by a directed acyclic graph $G=(P,E(P))$ where (i) every directed edge $(p,q)\in E(P)$ satisfies 
$p\leq_P q$ and (ii) 
every immediate pair $p\rightarrow q$ in $P$ must correspond to an edge
$(p,q)\in E(P)$. The size of the poset $|P|$ with such a 
representation is measured as the total number of vertices and edges in $G$.
Given a directed graph $G=(P,E(P))$ for a finite poset $P$, we construct a zigzag poset $P_{ZZ}$ using the concept of
Eulerian tour in graphs so that $G_{ZZ}=(P_{ZZ},E(P_{ZZ}))$ 
represents the zigzag path for $P_{ZZ}$ 
and $P=\Fld_s P_{ZZ}$ for some folding $s$.
\begin{figure}[htbp]
\centerline{\includegraphics{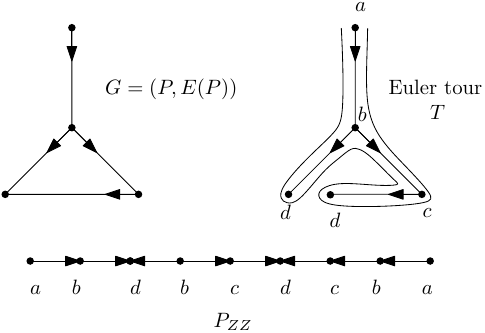}}
\caption{Unfolding $P$ (left) to a zigzag poset $P_{ZZ}$; wrapping a thread
around (right) gives a zigzag path (bottom) in $P_{ZZ}$}
\label{fig:unfoldingPath}
\end{figure}
Given a connected graph $G=(V,E)$, an Eulerian tour in $G$ is an ordered sequence of its
vertices, possibly with repetitions, $u_0,\ldots, u_i,u_{i+1},\ldots u_t=u_0$ so that every edge $(p,q)\in E$ appears exactly once as a consecutive pair $(p=u_i,u_{i+1}=q)$ of vertices in the sequence. It is known that if $G$ has even degree at every vertex, then $G$ necessarily has an Eulerian tour which can be computed
in $O(|V|+|E|)$ time. We consider the undirected version $\overline{G}=(P,E(P))$ of the poset graph $G=(P,E(P))$ and
straighten it up using an Eulerian tour, see Figure~\ref{fig:unfoldingPath}. However, the graph $\overline{G}$ may not satisfy the
vertex degree requirement. So, we double every edge, that is, put a parallel edge
in the graph for every edge (this is equivalent to wrapping a thread around as shown in Figure~\ref{fig:unfoldingPath}). The modified graph $\overline{G}$ then has only even-degree vertices. 
We compute an Eulerian tour $T$ in $\overline{G}$ and for every adjacent pair of 
vertices $p,q$ in the tour representing
an edge $(p,q)$ in $\overline{G}$ we impose the order $p\leq_T q$ if and only if
the directed edge $(p,q)$ is in $G$. The poset $(T,\leq_T)$ is taken as the zigzag poset
$P_{ZZ}$ and the tour (zigzag path) as its representation. Clearly, the number of
edges in the tour (immediate pairs in $P_{ZZ}$) is at most twice the number of edges in $G$ and the number of vertices in $P_{ZZ}$ is one more than that number. We have
\begin{fact}
    $P=\Fld_s P_{ZZ}$ for some folding $s$ where $|P_{ZZ}| \leq 2 |P|+1$.
    \label{cor:folding}
\end{fact}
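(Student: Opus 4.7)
The plan is to verify three things in sequence: (a) the doubled Eulerian construction produces a well-defined zigzag poset $P_{ZZ}$, (b) the tour position map $s$ meets the two axioms of a folding, and (c) the cardinality bound holds by a straightforward edge count.

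First I would handle the existence of the Eulerian tour. Since $P$ is assumed connected, the underlying undirected graph $\overline{G}$ is connected, and duplicating every edge keeps it connected while making every vertex degree even (each original contribution is counted twice). Hence $\overline{G}$ after doubling admits a closed Eulerian tour $T: u_0, u_1, \ldots, u_t = u_0$, computable in $O(|V|+|E|)$ time. I would then declare $P_{ZZ}$ to be the set $\{p_0, p_1, \ldots, p_t\}$ of \emph{tour positions} (treated as distinct elements, even when $u_i = u_j$ in $P$), with order generated by the immediate relation $p_i \leftrightarrow p_{i+1}$ oriented as $p_i \to p_{i+1}$ iff $(u_i, u_{i+1}) \in E(P)$ as a directed edge, and $p_{i+1} \to p_i$ otherwise. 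Because each tour position is distinct and the only covering relations are the consecutive ones, $(P_{ZZ}, \leq_{T})$ satisfies the Hasse-path condition and is a zigzag poset by definition.

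Next I would define $s: P_{ZZ} \to P$ by $s(p_i) := u_i$ and verify the two folding axioms. Order preservation on immediate pairs is immediate from the construction: $p_i \leq_{P_{ZZ}} p_{i+1}$ precisely when $(u_i, u_{i+1}) \in E(P)$, and by property (i) of the graph representation of $P$ this implies $s(p_i) = u_i \leq_P u_{i+1} = s(p_{i+1})$; transitivity then extends this to arbitrary comparable pairs in $P_{ZZ}$. For the Hasse-surjectivity axiom, let $u \to v$ be any immediate pair in $P$. By property (ii) of the representation of $P$ as a directed graph, $(u,v) \in E(P)$, so $(u,v)$ is a (directed) edge of $G$ and therefore an edge of the doubled undirected graph. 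Since the Eulerian tour traverses every edge, there is some index $i$ with $\{u_i, u_{i+1}\} = \{u, v\}$; by the orientation rule, this becomes an immediate pair in $P_{ZZ}$ mapping under $s$ onto $u \to v$. Surjectivity of $s$ on points follows because every vertex of the connected graph appears at some position in the tour.

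For the size bound, I would simply count: the Eulerian tour has exactly $2|E(P)|$ edges (one per doubled edge), so $P_{ZZ}$ has $2|E(P)| + 1$ points and $2|E(P)|$ immediate pairs. Since $|P| = |V(P)| + |E(P)|$ by the stated size convention, and $|V(P)| \geq 0$, we obtain $|P_{ZZ}| = 2|E(P)|+1 \leq 2(|V(P)|+|E(P)|)+1 = 2|P|+1$, as required.

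The only non-mechanical step is the Hasse-surjectivity check, and the main obstacle there is purely notational: one must be careful that the orientation rule does produce $u \to v$ and not $v \to u$ when the tour happens to traverse the doubled edge from $v$-side to $u$-side. This is resolved by noting that our rule consults the directed edge set $E(P)$ rather than the tour direction, so the immediate pair in $P_{ZZ}$ is forced to agree with $E(P)$. Once this is observed, the rest of the argument is bookkeeping on the tour.
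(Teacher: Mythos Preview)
Your proposal is correct and follows essentially the same route as the paper: the paper does not give a separate formal proof of this Fact but derives it from the preceding construction paragraph (double the edges of $\overline{G}$, take an Eulerian tour, orient consecutive pairs according to $E(P)$, and count), and your argument is a careful verification of exactly that construction. Your explicit checks of the two folding axioms and the edge-count for the size bound simply make rigorous what the paper leaves implicit.
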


In our algorithm, we assume 
that $\M$ is implicitly given by a $P$-filtration:
A $P$-filtration $F(K)$ of a simplicial complex $K$ is a family of
subcomplexes $F(K)=\{K_p\subseteq K\}_{p\in P}$ so that
$K_p \subseteq K_q$ if $p\leq_P q$. We assume that both $K$ and $P$ are finite.
Applying the homology functor $H_k(\cdot)$ to the filtration $F(K)$, one obtains a module $\M:=\M_{F(K)}$
in degree $k$ where $\M(p)=H_k(K_p)$ and $\M(p\leq_P q):H_k(K_p)\rightarrow H_k(K_q)$ induced by the inclusion $K_p\subseteq K_q$. 

First, we unfold the poset $P$ into a zigzag poset $P_{ZZ}$ using the method described
before. Let $s$ be the resulting folding given by
Fact~\ref{cor:folding}. To unfold $\M: P\rightarrow \vecsp$ into a zigzag module $\M_{ZZ}: P_{ZZ}\rightarrow \vecsp$, we build a zigzag filtration $F_{ZZ}=\{K_p\}_{p\in P_{ZZ}}$
by assigning $K_p:=K_{s(p)}$. To check that $F_{ZZ}$ is indeed a zigzag filtration, observe that
$K_p\subseteq K_q$ for every $p\leq_{P_{ZZ}} q$ because (i) $s(p)\leq_P s(q)$ by definition of
folding $s$ and (ii) $K_{s(p)}\subseteq K_{s(q)}$ by definition of the filtration $F(K)$. It can be easily verified that applying the homology functor on 
$F_{ZZ}$, we get the $s$-unfolding $\M_{ZZ}: P_{ZZ}\rightarrow \vecsp$ of
$\M$.


\section{Algorithm}
\label{sec:algorithm}
The algorithm ({\sc GenRank} in pseudocode) takes a $P$-fitration $F$ and
a degree $k$ for the homology group.
First, it $s$-unfolds $P$ to a zigzag path of $P_{ZZ}$ and computes the filtration $F_{ZZ}$ (Step 1).
Let $\M$ and $\M_{ZZ}$ be the modules obtained by applying the
homology functor in degree $k$ on $F$ and $F_{ZZ}$ respectively 
as described above.
We need to compute interval modules from $F_{ZZ}$ that represents
a direct decomposition of $\M_{ZZ}$, i.e., we need a zigzag persistence algorithm that
computes the intervals in the barcode 
with a \emph{representative} (step 3).
A sequence of $k$-cycles $z_b,\ldots,z_d$ constitutes a representative
of a limit module $\Int:=\Int^{[p_b,p_d]}$ if $[z_i]$ is chosen as $\bs^{\Int}_{p_i}$ for $p_i\in [p_b,p_d]$. The zigzag persistence algorithm in~\cite{DHM25} can compute these representatives efficiently.
Next, the algorithm checks how many interval modules in the computed decomposition of $\M_{ZZ}$ can be converted to $s$-complete modules
which provides
$\rk(\M)$ according to the definition of $s$-completeness. 

Next proposition tells us that it is sufficient to check only the full
interval modules in a direct decomposition of $\M_{ZZ}$ if they can be converted to $s$-complete modules.

\begin{proposition}
Let $\M_{ZZ}$ be an $s$-unfolding of $\M$ and $\Int_1,\ldots,\Int_\ell$ be the set of limit modules in a direct decomposition $\D$ of $\M_{ZZ}$. 
There exist unique $\alpha_i\in\mathbb{F}$, $i\in [\ell]$, so that every
$s$-complete interval module $\Int$ in any direct decomposition $\D'$ of $\M_{ZZ}$ satisfies that
$\bs^\Int=\sum_{i=1}^\ell\alpha_i\bs^{\Int_i}$ where for at least one $i\in [\ell]$, $\Int_i$ is a full interval module and $\alpha_i\not=0$.
\label{prop:complete-interval}
\end{proposition}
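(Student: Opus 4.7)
The plan is to apply Proposition~\ref{prop:limit-module} to $\Int$ and then rule out the degenerate case by evaluating the resulting expansion at the boundary point $p_0$ of the zigzag path.

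Since $\Int$ is $s$-complete, Definition~\ref{def:foldable} forces $\Int$ to be a full interval submodule of $\M_{ZZ}$. Proposition~\ref{prop:limit-module} therefore provides, for each such $\Int$, uniquely determined scalars $\alpha_i \in \mathbb{F}$ indexed by the limit modules $\Int_1, \ldots, \Int_\ell$ of the fixed decomposition $\D$ satisfying $\bs^\Int = \sum_{i=1}^\ell \alpha_i \bs^{\Int_i}$, interpreted in the sense of Observation~\ref{obs:addition}. This settles the existence and uniqueness part, with the caveat that the $\alpha_i$ are functions of the chosen $\Int$ (and of the fixed $\D$), not absolute constants; I expect the natural reading of the statement to be exactly this.

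For the non-triviality claim, I argue by contradiction and suppose every index $i$ with $\alpha_i \neq 0$ corresponds to a non-full limit module $\Int_i$. A non-full single-interval limit module in a direct decomposition must be an open-open interval strictly contained inside $[p_0, p_m]$: at an interior endpoint $p_b$ of its support with $b > 0$, a forward arrow $p_{b-1} \rightarrow p_b$ entering the support would force $\bs^{\Int_i}_{p_b} = \phi_{b-1}(0) = 0$ by Definition~\ref{dfn:rep-seq}, contradicting that $\bs^{\Int_i}_{p_b}$ spans the one-dimensional space $\Int_i(p_b)$, and a symmetric argument applies at the upper endpoint. Consequently $\bs^{\Int_i}_{p_0} = 0$ for each such non-full $\Int_i$, and evaluating the expansion at $p_0$ yields $\bs^\Int_{p_0} = \sum_i \alpha_i \bs^{\Int_i}_{p_0} = 0$. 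But $\Int$ is full, so $\Int(p_0) \cong \mathbb{F}$ and $\bs^\Int_{p_0}$ spans $\Int(p_0)$, hence is nonzero, a contradiction. The only part that demands real care is the endpoint analysis pinning the support of non-full limit modules strictly inside $[p_0, p_m]$; once that is in hand, evaluation at $p_0$ (equivalently, at $p_m$) closes the argument.
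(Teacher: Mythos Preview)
Your endpoint analysis has a genuine gap. You assert that a non-full limit module appearing as a summand in $\D$ must have support strictly inside $(p_0,p_m)$, but your justification only treats an \emph{interior} left endpoint $p_b$ with $b>0$ (and symmetrically on the right). Nothing rules out $b=0$: an interval module $\Int^{[p_0,p_d]}$ with $d<m$ and a forward arrow $p_d\to p_{d+1}$ is a bona fide limit module---the tuple $(\bs_{p_0},\ldots,\bs_{p_d},0,\ldots,0)$ satisfies Definition~\ref{dfn:rep-seq} provided $\phi_d(\bs_{p_d})=0$, and there is no compatibility condition to the left of $p_0$. Symmetrically a non-full limit module may touch $p_m$. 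Hence $\bs^{\Int_i}_{p_0}$ need not vanish for every non-full $\Int_i$, and evaluating $\bs^\Int_{p_0}=\sum_i\alpha_i\bs^{\Int_i}_{p_0}$ yields no contradiction. (Concretely: on $p_0\to p_1\leftarrow p_2\to p_3$ one can build $\M_{ZZ}$ whose decomposition is $[p_0,p_2]\oplus[p_2,p_3]$, both non-full limit modules, and the sum of their representatives is the representative of a full interval submodule.)

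The paper repairs this by replacing evaluation at a point with the image in $\colim\,\M_{ZZ}$: every non-full limit module has at least one \emph{interior} open endpoint, so its representative is connected by a chain $v_i\sim\cdots\sim 0$ and dies in the colimit; thus under your hypothesis $\bs^\Int$ would map to zero in $\colim\,\M_{ZZ}$. The paper then invokes $s$-completeness (via Theorem~\ref{thm:full-folding}(1)) to transport this to $\M$ and contradict Theorem~\ref{thm:rk}. In fact the detour through $\M$ is avoidable: since $\Int$ lies in a direct decomposition $\D'$ it is a full interval \emph{summand} of $\M_{ZZ}$, hence $\limproj\,\Int\to\colim\,\Int$ is an isomorphism and $\bs^\Int$ already survives nonzero in $\colim\,\M_{ZZ}$, contradicting the previous paragraph directly. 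Your argument used only that $\Int$ is full, never that it is a summand, and as the example above shows the conclusion can fail for full submodules that are not summands.
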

\begin{proof}
First, Proposition~\ref{prop:limit-module} allows us to write $$\bs^\Int=\sum_{i=1}^\ell\alpha_i\bs^{\Int_i} \mbox{ for
some unique }\alpha_i\in\mathbb{F}, i\in [\ell].
$$
Suppose that the claim of the proposition is not true. 
Fix a point $p\in P_{ZZ}$.
We have $\bs^{\Int}_p=\sum_{i=1}^\ell\alpha_i \bs^{\Int_i}_p$. 
Recall the quotient map $\rho$ for
colimit in Proposition~\ref{prop:alternate-limit}\ref{item:colim}.
For each vector
$v_i=\bs^{\Int_i}_p$, the quotient vector $\rho(v_i)$ is a zero element in the colimit $\colim\, \M_{ZZ}$ because the limit module $\Int_i$ which is not
full either has a sequence of vectors $v_i\leftrightarrow \cdots\rightarrow 0$ or $0\leftarrow \cdots \leftrightarrow v_i$ in its representative and thus
$v_i\sim 0$ (Notation~\ref{not:sim}). It follows that any representative of
$\Int$ is sent to a zero element
in $\colim\, \M_{ZZ}$. Since $\Int$ is $s$-complete, $\Fld_s(\Int)$ exists and its
representative is an element of $\limproj\, \M$. The definition
of the folding implies that the limit-to-colimit
map sends the representative of $\Fld_s(\Int)$ also to a zero element in $\colim\, \M$.


Since $\Int$ is $s$-complete, it is foldable and its complement is also foldable. Then, $\Fld_s(\Int)$ is a full interval summand of $\M$ according to
Theorem~\ref{thm:full-folding}(1). Since the full interval summand $\Fld_s(\Int)$ is sent to zero by the limit-to-colimit map, we have 
$\rk(\M')=\rk(\M)$ where $\M=\Fld_s(\Int)\oplus \M'$. However, $\rk(\M')\leq \rk(\M)-1$
according to Theorem~\ref{thm:rk} reaching a contradiction.
\end{proof}

The above proposition suggests that we try to \emph{convert} every full interval module $\Int$ in a direct decomposition $\D$ of $\M_{ZZ}$ to a foldable module first, and then appealing
to Theorem~\ref{thm:full-folding}(3) check if the complement module
$\overline{\Int}$ in $\D$ is invertible.

\begin{definition}
    Let $\M_{ZZ}$ be an $s$-unfolding of $\M$.
    We say a full interval module $\Int$ in a direct decomposition $\D$ of $\M_{ZZ}$ is \emph{convertible} in $\D$ if either (i) $\Int$ is foldable, or (ii) there exists a set of limit modules 
    $\{\Int_i\}$ in $\D$ none of which is equal to $\Int$ so that $\Int'$ with
    representative $\bs^{\Int'}:=\bs^\Int + \sum_i \alpha_i\bs^{\Int_i}$, $0\not=\alpha_i\in \mathbb{F}$, is foldable.
    \label{def:convertible}
\end{definition}
Earlier, we defined $\kappa(\D)$ as the number of $s$-complete interval modules
in a decomposition $\D$ of a zigzag module $\M_{ZZ}$. Recall that some
of the full interval modules in $\D$ may not be $s$-complete but can be converted
to be $s$-complete. To account for those, we introduce the following notation.
\begin{notation}
    Let $\M_{ZZ}$ be an $s$-unfolding of $\M$ and $\D$ be any of its direct decomposition. Denote by $\tau(\D)$ the number of interval modules $\Int$ in $\D$ such that $\Int$ is convertible and $\bar{\Int}$ is invertible in $\D$. Observe that $\kappa(\D)\leq \tau(\D)$ because $s$-complete interval modules are convertible by condition (i) in Definition~\ref{def:convertible} and their complements are invertible by definition.
\end{notation}

\begin{proposition}
    $\kappa(\D)\leq \rk(\M) \leq \tau(\D)$.
    \label{prop:convert}
\end{proposition}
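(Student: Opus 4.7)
The first inequality $\kappa(\D)\leq \rk(\M)$ has already been established in Proposition~\ref{prop:complete-rank}, so my plan focuses on proving $\rk(\M)\leq \tau(\D)$. I would begin by fixing an $s$-complete decomposition $\D^*$ of $\M_{ZZ}$ guaranteed by Proposition~\ref{prop:complete-rank}; concretely, take a direct decomposition of $\M$ with $r:=\rk(\M)$ full interval summands $\Int'_1,\ldots,\Int'_r$ and form $\D^*$ by $s$-unfolding, so that the $s$-complete full interval modules of $\D^*$ are $\Int^*_j:=\Fld_s^{-1}(\Int'_j)$. By Proposition~\ref{prop:complete-interval}, each representative $\bs^{\Int^*_j}$ lies in $\limproj \M_{ZZ}$ and expands uniquely in the basis formed by the limit-module representatives of $\D$.

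I would then separate the limit modules of $\D$ into full interval modules $\Int_1,\ldots,\Int_q$ and open-open limit modules, and collect the coefficients of $\bs^{\Int^*_j}$ on the full intervals into an $r\times q$ matrix $B=(\beta_{j,i})$. The crucial step, which I expect to be the main obstacle, is verifying that $B$ has rank $r$. The argument I have in mind is that every open-open limit representative vanishes at the two endpoints $p_0$ and $p_m$ of the zigzag path $P_{ZZ}$, whereas the vectors $\bs^{\Int^*_j}_{p_0}$ for $j\in[r]$ are linearly independent, being part of the basis of $\M_{ZZ}(p_0)$ arising from $\D^*$. Any nontrivial linear combination of the $\bs^{\Int^*_j}$'s falling in the open-open span would therefore vanish at $p_0$ and contradict this independence. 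Hence $B$ has rank $r$, and I can select $r$ linearly independent columns, picking out full interval modules $\Int_{i_1},\ldots,\Int_{i_r}$ of $\D$ whose corresponding $r\times r$ submatrix $B'$ is invertible.

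Convertibility of each $\Int_{i_k}$ then follows from inverting $B'$. I would set $\bs^{\Int'_{i_k}}:=\sum_{j=1}^r (B')^{-1}_{k,j}\,\bs^{\Int^*_j}$; the identity $(B')^{-1}B'=\mathrm{Id}$ rewrites this as $\bs^{\Int_{i_k}}$ plus a linear combination of the remaining limit-module representatives of $\D$ (the contributions from the other chosen $\Int_{i_{k'}}$ cancel), while as a combination of foldable representatives $\bs^{\Int^*_j}$ it is itself foldable. Replacing $\Int_{i_k}$ in $\D$ with $\Int'_{i_k}$ is an invertible change of basis at every point of $P_{ZZ}$, so it yields a valid direct decomposition $\D'$ of $\M_{ZZ}$ with $\overline{\Int'_{i_k}}=\overline{\Int_{i_k}}$.

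For the last step, foldability of $\overline{\Int_{i_k}}$, I would invoke Theorem~\ref{thm:full-folding}(3) on $\D'$. The folded module $\Fld_s(\Int'_{i_k})$ has representative $\sum_{j=1}^r (B')^{-1}_{k,j}\,\bs^{\Int'_j}$, which is a nonzero section inside the summand $\Int'_1\oplus\cdots\oplus\Int'_r$ of $\M$; extending this section to a basis of sections of that summand exhibits $\Fld_s(\Int'_{i_k})$ as a full interval summand of $\M$. Theorem~\ref{thm:full-folding}(3) then yields that $\Fld_s(\overline{\Int'_{i_k}})$ exists, so $\overline{\Int'_{i_k}}=\overline{\Int_{i_k}}$ is foldable. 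This exhibits $r$ distinct full interval modules in $\D$ that are convertible with foldable complements, giving $\tau(\D)\geq r=\rk(\M)$ and completing the proof.
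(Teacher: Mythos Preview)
Your argument is correct and takes a genuinely different route from the paper's proof.  The paper proceeds by induction on~$j$: having expressed $\bs^{\Jnt_{j+1}}$ in the limit modules of the successively modified decomposition $\D_j$ (obtained from $\D$ by already replacing $\Int_1,\ldots,\Int_j$ with $\Jnt_1,\ldots,\Jnt_j$), it invokes Proposition~\ref{prop:complete-interval} to find a fresh full interval module $\Int_{j+1}$, converts it, and then applies Theorem~\ref{thm:full-folding}(3) exactly as you do.  You instead set up the $r\times q$ coefficient matrix~$B$ all at once and prove it has full row rank by the pleasant observation that every non-full limit representative vanishes at the endpoint $p_0$, while the~$\bs^{\Int^*_j}_{p_0}$ are independent.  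Selecting $r$ independent columns and inverting~$B'$ then delivers all $r$ convertible full intervals simultaneously.  Your approach is more direct and sidesteps the inductive bookkeeping; the endpoint-vanishing argument is in fact an elementary substitute for the colimit reasoning behind Proposition~\ref{prop:complete-interval}.  The paper's inductive structure, on the other hand, mirrors the one-at-a-time conversion loop in {\sc GenRank} and feeds more naturally into Theorem~\ref{thm:algo}.  Two minor remarks: the expansion of $\bs^{\Int^*_j}$ you cite is really Proposition~\ref{prop:limit-module} rather than Proposition~\ref{prop:complete-interval}; and it is worth noting explicitly that $\Int'_{i_k}$ is nonzero at every point (hence a \emph{full} interval module) because the $\bs^{\Int^*_j}_{p}$ are pointwise independent, so Theorem~\ref{thm:full-folding}(3) applies.
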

\begin{proof}
  It follows from Proposition~\ref{prop:complete-rank} that $\kappa(\D)\leq \rk(\M)$.
  So, we only show $\rk(\M)\leq \tau(\D)$. 
  Observe that if $\rk(\M)=0$ there is nothing to prove since $\tau(\D)$ is non-negative by definition, so assume $\rk(\M)\not=0$.
  Consider an $s$-complete decomposition $\D^*$ of $\M_{ZZ}$ which exists according to Proposition~\ref{prop:complete-rank}. Let $\Jnt_1,\ldots,\Jnt_r$, $r=\rk(\M)$, denote the set of these $s$-complete modules in $\D^*$. We show by induction that there is a set of full modules $\Int_1,\ldots,\Int_r$ in the direct decomposition $\D$ of $\M_{ZZ}$ so that $\Int_i$ is convertible to $\Jnt_i$ and $\bar{\Int_i}$ invertible,
  $i\in [1,r]$ establishing the claim.


  For the base case, consider $\Jnt_1$. By Proposition~\ref{prop:complete-interval}, there is a set of limit modules $\Int'_i$ in $\D$ so that
  there exist unique $0\not=\alpha_i\in \mathbb{F}$ giving $\bs^{\Jnt_1}=\sum_i \alpha_i\bs^{\Int'_i}$. Choose any full module among $\Int_i'$s as $\Int_1$ which is guaranteed to exist by Proposition~\ref{prop:complete-interval}. Then, consider the decomposition $\D_1$ that only replaces $\Int_1$ with $\Jnt_1$ in $\D_0=\D$. 
  Observe that $\M_{ZZ}=\Int_1\oplus\overline{\Int_1}=\Jnt_1\oplus\overline{\Jnt_1}=\Int_1\oplus\overline{\Jnt_1}$
  establishing that $\overline{\Int_1}\cong\overline{\Jnt_1}$ where $\overline{\Jnt_1}$ is invertible (actually foldable). This means
  $\overline{\Int_1}$ is invertible as required.
  
  \cancel{
  Observe that, $\M_{ZZ}=\Jnt_1\oplus\overline{\Int_1}$. Applying Theorem~\ref{thm:full-folding}(3), we get that $\Fld_s(\overline{\Int_1})$ exists
  because $\Fld_s(\Jnt_1)$ exists and is a summand of $\M$ by definition of $s$-completeness and
  Theorem~\ref{thm:full-folding}(1). We conclude that $\overline{\Int_1}$ is foldable.}

  
  To complete the induction, assume that, for $1\leq j<r$, we already have that $\Int_1,\ldots,\Int_j$ are convertible to $\Jnt_1,\ldots,\Jnt_j$ and $\overline{\Int_1},\ldots,\overline{\Int_j}$ are invertible.
  Also, we have the decompositions $\D_0,\ldots, \D_j$ of $\M_{ZZ}$ where $\D_i$ is obtained inductively from $\D_{i-1}$ by replacing $\Int_{i}$ with $\Jnt_{i}$.
  By Proposition~\ref{prop:complete-interval}, we have $\bs^{\Jnt_{j+1}}=\sum_i\alpha_i \bs^{\Int'_i}$, $\alpha_i\in \mathbb{F}$, for some limit modules in $\D_j$. We claim that the collection $\{\Int'_i\}$ includes a full module other than $\Jnt_1,\ldots,\Jnt_j$. If not, we have
  $\bs^{\Jnt_{j+1}}=(\sum_i\gamma_i\bs^{\Jnt_i})+(\sum_k\gamma'_k\bs^{\Int'_k})$ for
  $\gamma_i,\gamma'_k\in \mathbb{F}$ where none of $\{\Int'_k\}$ is full.
  Then, $\bs^{\Jnt_{j+1}}-\sum_i\gamma_i\bs^{\Jnt_i}=\sum_k\gamma'_k\bs^{\Int'_k}$.
  The LHS gives a representative of an $s$-complete module which is a sum of 
  representatives of limit modules(RHS)
  none of which is full contradicting
  Proposition~\ref{prop:complete-interval}. Thus, the set $\{\Int'_i\}$ includes a full module other than $\Jnt_1,\ldots,\Jnt_j$. Let $\Int_{j+1}$ in $\D_j$ be any such full module. Observe that $\Int_{j+1}$ is also in $\D$. It follows that $\Int_{j+1}$ in $\D$ is convertible to $\Jnt_{j+1}$. 
  Also, with the same reasoning as in the base case, one can check that $\overline{\Int_{j+1}}$ in $\D$ is invertible.
\end{proof}
\begin{theorem}
Let $\M_{ZZ}$ be an $s$-unfolding of $\M$ and $\D$ be any direct decomposition of $\M_{ZZ}$. 
\begin{enumerate}
    \item If every convertible full module $\Int$ in $\D$ where $\overline\Int$ is invertible
    is $s$-complete, 
    then $\kappa(\D)=\rk(\M)$ ($\D$ is $s$-complete) else
    \item Let $\Int$ be a convertible module where $\overline\Int$ in $\D$ is invertible. Let
    $\D'$ be the direct decomposition obtained from $\D$ by
    replacing $\Int$
    with the converted module
    $($$\mathrm{Convert}(\Int)$ in step 4 of {\sc GenRank}$)$, then $\kappa(\D')=\kappa(\D)+1$.
\end{enumerate}
\label{thm:algo}
\end{theorem}
\begin{proof}
    For (1), observe that, $\kappa(\D)=\tau(\D)$ in this case implying $\kappa(\D)=\rk(\M)=\tau(\D)$ due to Proposition~\ref{prop:convert}.

    For (2), observe that an interval module $\Jnt\neq\Int$ in $\D$ is foldable iff it is foldable in $\D'$ because the only affected module in $\D$ is $\Int$. 
    We observe that $\overline\Jnt$ remains invertible in $\D'$ if and only if it were
    so in $\D$. 
    It follows that, compared to $\D$, the decomposition
    $\D'$ has exactly one more
    foldable module, namely $\mathrm{Convert}(\Int)$, with its complement being invertible. Hence $\kappa(\D')=\kappa(\D)+1$. 
    \end{proof}
   \cancel{ 
    To prove the claim, observe that for any two points $p$ and $p'$ in $P_{ZZ}$ with $s(p)=s(p')$, $\overline\Jnt(p)=\overline\Jnt(p')$ in $\D$ because $\Jnt$ is assumed to be foldable. Since $\overline\Jnt(p)=\mathrm{span}(\bs^{\Int_1}_p,\ldots,\bs^{\Int_s}_{p})$ where $\Int_1,\ldots,\Int_s$ is the set of interval modules in $\D$ other than $\Jnt$, we have
    $\overline\Jnt(p)=\mathrm{span}(\bs^{\Int_1}_p,\ldots,\bs^{\Int_s}_p)$
    and $\overline\Jnt(p')=\mathrm{span}(\bs^{\Int_1}_{p'},\ldots,\bs^{\Int_s}_{p'})$. After converting
    $\Int$, these spans can change only if
    the vector $\bs^{\Int}_p$ changes due to the addition of the vector $\bs^{\Jnt}_p$ and the vector $\bs^{\Int}_{p'}$ changes due to the addition of the vector $\bs^{\Jnt}_{p'}$. Since $\Jnt$ is foldable in $\D$, we have $\bs^{\Jnt}_{p}=\bs^{\Jnt}_{p'}$ and thus the new spans of the basis vectors
    at $p$ and $p'$ remain equal in $\D'$ if and only if they were so in $\D$. 
 }
The algorithm {\sc GenRank} draws upon Theorem~\ref{thm:algo}. To accommodate
computations with a fixed finite precision, we assume that the field
$\mathbb{F}$ is finite.
The algorithm takes every full interval module $\Int$ that is not foldable and checks if it is convertible. If it is convertible, it converts it and then check if its complement $\overline\Int$ for invertibility. It continues
converting such modules to foldable modules until it cannot find any to convert.
The current direct decomposition $\D$ of $\M_{ZZ}$ changes with these conversions.
At the end of this process, all convertible modules in the current
decomposition whose complements are invertible become foldable themselves.
Their number then coincides with $\rk(\M)$.
To determine the existence of the limit modules whose addition makes $\Int$
foldable, we take the help of an annotation matrix $A_p$ ~\cite[Chaper 4]{DW22} 
for each complex $K_p$, $p\in P_{ZZ}$, computed in step 2.
Without further elaborations, we only mention that annotation
for a $k$-cycle $z\in Z_k(K_p)$ (cycle group in degree $k$) 
is the coordinate of its class $[z]$
in a chosen basis of $H_k(K_p)$; see~\cite{BCCDW12} (annotation algorithm
in~\cite{BCCDW12} is described for $\mathbb{F}=\Z_2$ though it easily extends
for any finite field $\mathbb{F}$).
The representative cycles maintained for every interval module
in the decomposition of $\M_{ZZ}$ at point $p$ form a cycle basis $[z_1],\ldots,[z_g]$ 
of $H_k(K_p)$. We will see later
that testing a full module for foldability amounts to
calculating the annotations of the representative cycles of limit
modules for certain points $p\in P_{ZZ}$ and performing certain matrix
reductions with them; see section~\ref{sec:convert}.\\

\noindent
{\bf Algorithm} {\sc GenRank} ($P$-filtration $F$, $k\geq 0$)\label{algorithm:GenRank}
\begin{itemize}
    \item Step 1. Unfold $P$ and $F$ into a zigzag path $P_{ZZ}$ and a
    zigzag filtration $F_{ZZ}$ respectively
    \item Step 2. Compute an annotation matrix $A_p$ for every complex $K_p$ in $F_{ZZ}$, $p\in P_{ZZ}$
    \item Step 3. Compute a barcode for $F_{ZZ}$ with representative $k$-cycles; Let $\mathcal I$ denote the set of full interval modules  corresponding to full bars in the current decomposition $\D$
    \item Step 4. For every module $\Int\in {\mathcal I}$ do
        \begin{itemize}
           \item If $\Int$ is convertible in $\D$, update $\D$ with $\Int\leftarrow\mathrm{Convert}(\Int)$
        \item If $\overline{\Int}$ is invertible in $\D$, mark $\Int$ \texttt{complete} 
        \end{itemize}       
    \item Output the number of \texttt{complete} interval modules/*may output converted modules*/
\end{itemize}

Now we focus on the crucial checks in step 4. Fix the degree of all homology groups
to be $k\geq 0$, which form the vector spaces of the modules in our discussion.

\subsection{Convertibility of $\Int$ and computing $\mathrm{Convert}(\Int)$}
\label{sec:convert}
Assume that $\Int$ is a full interval module in the current direct decomposition $\D$ of $\M_{ZZ}=\bigoplus_i \Int_i=\bigoplus_i \Int^{[p_{b_i},p_{d_i}]}$.
For each interval module $\Int_i=\Int^{[p_{b_i},p_{d_i}]}$ the algorithm computes a sequence of
\emph{representative} $k$-cycles $\{z^i_{p_j}\in Z^k(K_{p_j}) \mid p_j\in \{p_{b_i},p_{b_i+1},\ldots, p_{d_i}\}\subseteq P_{ZZ}\}$.
A representative
for a limit module $\Int^{[p_{b_i},p_{d_i}]}$ 
is given by the homology classes
$[z^i_{p_{b_i}}],\ldots, [z^i_{p_{d_i}}]$. Such a representative
for each limit module could be computed
by an adaptation of the zigzag persistence algorithm in~\cite{maria2014zigzag} or in \cite[Chapter 4,section 4.3]{DW22}. Instead, we apply the more efficient algorithm 
given in~\cite{DHM25}.
Then, a conversion of a convertible
module $\Int$ updates this representative as we update $\Int$ to $\mathrm{Convert}(\Int)$.


For any point $p\in P_{ZZ}$, call the set of all points that fold to $s(p)$ its partners and denote it as $\prt(p)$. We check if there are limit modules whose representatives when added
to the representative of $\Int$ makes the vectors at each point in $\prt(p)$ the same for every point $p\in P_{ZZ}$ because that converts $\Int$ to a foldable interval module. The partner sets partition $P_{ZZ}$. We designate an arbitrary point, say $p$, in each partner set $\prt(p)$ where $|\prt(p)|>1$ as the \emph{leader} of $\prt(p)$. Let
$L$ denote the set of these leaders. For every point $p\in L$,
we do the following.

Let $\Int_1,\ldots,\Int_\ell$ denote the set of limit 
modules in the current decomposition $\D$ of $\M_{ZZ}$ and WLOG assume $\Int=\Int_1$.
The module $\Int$ is convertible iff for all $p\in L$, there exist 
$\alpha_i\in\mathbb{F}$ so that
$\bs^{\Int_1}_p + \sum_{i=2}^\ell\alpha_i\bs^{\Int_i}_p=
(\bs^{\Int_1}_{p_j} + \sum_{i=2}^\ell \alpha_i\bs^{\Int_i}_{p_j})$ for every $p_j\in\prt(p)$. Written differently, for all $p\in L$ and for every $p_j\in \prt(p)$,
we require
\begin{equation}
 \bs^{\Int_1}_p+\bs^{\Int_1}_{p_j}=\sum_{i=2}^\ell\alpha_i(\bs^{\Int_i}_{p}+\bs^{\Int_i}_{p_j})=\sum_{i=2}^\ell\alpha_iv^i_j.
\label{eq:convertible}
\end{equation}
where $v^i_j:=\bs^{\Int_i}_{p}+\bs^{\Int_i}_{p_j}$.
\begin{figure}[htbp]
\centerline{\includegraphics[width=0.8\textwidth]{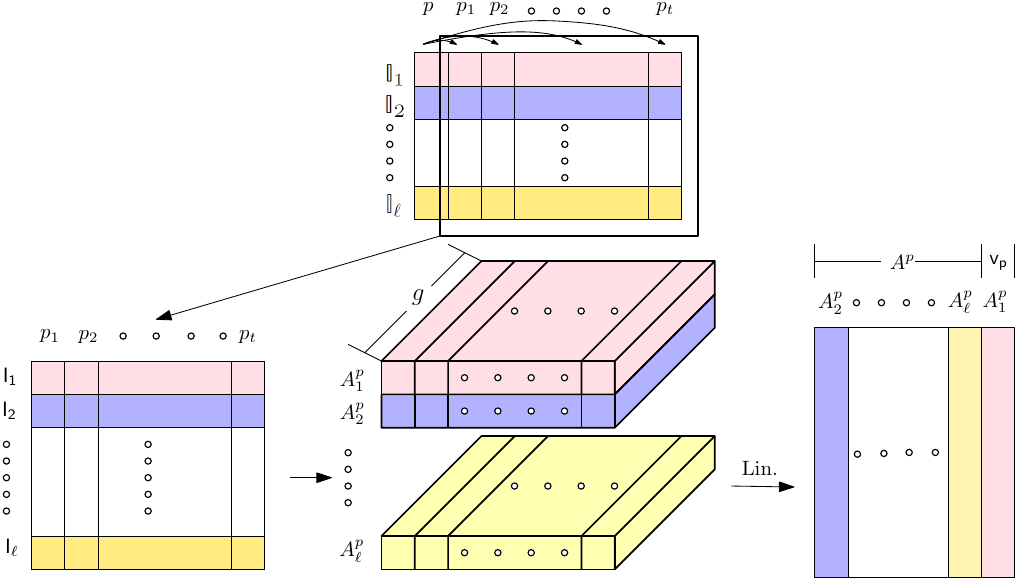}}
\caption{Illustration of matrix reduction with linearization to check Eq.~\eqref{eq:convertible}; (top) $p$ is representative of
$\prt(p)=\{p\}\cup\{p_1,\ldots,p_t\}$, each row $\Int_i$ is a limit
module with annotations of representatives at points in $\prt(p)$; (bottom left) row $\sf I_i$ consists of annotations of
cycles $\{\hat{z}^i_{p_j}\}_j$ where $\hat{z}^i_{p_j}=z^i_p+z^i_{p_j}$; (bottom middle) annotations of cycles in row $\sf I_i$ form matrix $A_i^p$ in the stacks of matrices; (bottom right) each matrix $A_i^p$, $i=2,\ldots,\ell$, linearized into a vector and combined into a larger matrix $A^p$ and $A_1^p$ into vector $\sf v_p$.}
\label{fig:MatrixReduct}
\end{figure}
Our goal is to determine coefficients $\alpha_i$s if Eq.~\eqref{eq:convertible} holds. To do this, for every $i=1,\ldots, \ell$ and every $p\in L$,
consider the matrices $A_i^p$ whose columns are annotations of the representative cycles
$\hat{z}_{p_j}^i$ of $v^i_j$ for every $p_j\in \prt(p)$; see Figure~\ref{fig:MatrixReduct} for an illustration. 
We can compute the 
$\mathbb{F}$-sum of two chains
$\hat{z}_{p_j}^i=z_{p_j}^i+z_p^i$ and add their annotation vectors
which is well defined because their
annotation vectors have the same dimension $g_p=\dim H_k(K_p)$
since all complexes at $p$ and $p_j$s were made equal during unfolding. 
Notice that, if $t_p=|\prt(p)|$, the matrix
$A_i^p$ has dimensions $g_p\times t_p$.
Then, checking Eq.~\eqref{eq:convertible} boils down to
determining $\alpha_i$s so that $A_1^p=\sum_{i=2}^\ell\alpha_iA_i^p$. This
can be done by using the linearization trick presented in~\cite{DX22}. Each of the
$g_p\times t_p$ matrices $A_i^p$, $i\in \{2,\ldots,\ell\}$,
is linearized into a vector $\sf v_i$ of length $t_pg_p$
and concatenated into a matrix of dimensions $t_pg_p \times \ell$.
Let $A^p$ denote this matrix (Figure~\ref{fig:MatrixReduct},bottom-right).

 Suppose that
there are $t=\sum_{p\in L} t_p$ points in $\cup_p \prt(p)$ and $g=\max_p\{g_p\}$.
We need to check Eq.~\eqref{eq:convertible} simultaneously for all $p\in L$ to determine the 
$\alpha_i$s. To do this, we concatenate matrices $A^p$ for all $p\in L$ each of
dimensions $t_pg_p\times \ell$ to create a matrix $A$ of dimensions $(\sum_pg_pt_p)\times \ell$. 
Then, a vector $\sf v$ of dimension $\sum_p g_pt_p$ is created by concatenating vectors $\sf v_p$ of dimension
$t_pg_p$ obtained by linearizing the matrices $A_1^p$ for all $p\in L$.
Checking if Eq.~\eqref{eq:convertible} holds simultaneously for all $p\in L$ boils down to
checking if $\sf v$ is in the column space of $A$. This is a matrix rank
computation on a matrix of dimensions $(\sum_pg_pt_p)\times \ell$ which can be done in $O(\ell^{\omega-1}\sum_pg_pt_p)=O(\ell^{\omega-1}gt)$ time where $\omega< 2.373$ is the
matrix multiplication exponent~\cite{JPS13}.

If Eq.~\eqref{eq:convertible} holds, we need to compute $\alpha=[\alpha_2,\ldots,\alpha_\ell]^T$ which can be done by
solving the linear system $A\alpha={\sf v}$. Then, we update $\Int$ to $\mathrm{Convert}(\Int)$ with the representative $\bs^\Int + \sum_{i=2}^\ell \alpha_i\bs^{\Int_i}$ to get the new decomposition $\D$. This takes $O(\ell^{\omega-1} gt)$ time again.

Notice that $g = O(n) = O(t)$.
There are $O(t)$ interval modules in the computed decomposition of 
$\M_{ZZ}$ and there are at most $O(n)$
full interval modules among them. So, we check at most $O(n)$
full interval modules for convertibility. If there are $\ell = O(t)$
number of limit modules, each convertibility check takes
$O(\ell^{\omega-1}tg) = O(t^\omega n)$ time giving a total of $O(t^\omega n^2)$ time for converting all convertible interval modules in step 4.

To finish the analysis of step 4, we need to consider the time spent for creating the
annotations for cycle representatives of the limit modules. For a representative cycle $z_p$, we get its annotation in the complex $K_p$ by adding the annotations of length
$g_p=\dim H_k(K_p)$ for every simplex in $z_p$. Thus, to compute the annotations
of all $g_p$ representative cycles at point $p$ (classes of non-trivial representative cycles at $p$ form a basis of $H_k(K_p)$), we consider a $g_p\times n_p$ matrix $E$ whose columns are annotations for each of the $n_p$ $k$-simplices in $K_p$
and another matrix $G$ of dimensions $n_p\times g_p$ whose columns represent
each cycle representative in the chain space of $K_p$. The product matrix $EG$
provides the annotations for each of the cycle representatives. This takes
$O(n_pg_p^{\omega-1})=O(ng^{\omega-1})$ time. Accouting for all $p\in P_{ZZ}$ and $g=O(n)$, we obtain a complexity of $O(tn^{\omega})$. Then, over all iterations
of convertibility, we get a time $O(tn^{\omega+1})$.

\subsection{Invertibility of $\overline{\Int}$}
To check invertibility of $\overline{\Int}$, we do the following: first, we observe that $\Fld_s(\Int)$ is well defined because $\Int$ is already converted to be foldable.
Consider the quotient module $\Q=\M/(\Fld_s(\Int))$. If $\Q$ is a submodule
of $\M$, we know that $\Fld_s(\Int)$ is a summand. In other words, $\overline{\Int}$
is then invertible due to Theorem~\ref{thm:full-folding}(3). We construct a $P$-filtration $\hat{F}$ that induces $\Q$ and check algorithmically if it is a submodule of $\M$ as follows.

After converting $\Int$ to a foldable module, suppose that we obtain the representative cycles $\{[z_p]\}_{p\in P_{ZZ}}$ for $\Int$. We \emph{cone} these cycles in the $P$-filtration $F$ of the
simplicial complex $K$ that induces $\M$. It means that, we use a dummy vertex $\omega$ and for every simplicial complex
$K_{\Fld_s(p)}$, $p\in P_{ZZ}$, we introduce the simplex $\sigma\cup {\omega}$ for
every $\sigma\in z_p$ if it is not already present. Let the new $P$-filtration be 
$\hat{F}$ which filters the coned complex denoted $\hat{K}$. The $P$-module
induced by $\hat{F}$ is the quotient module $\Q$. We unfold $\Q$ implicitly
by unfolding $\hat{F}$ to $\hat{F}_{ZZ}$ onto $P_{ZZ}$. 
Let $\Q_{ZZ}$ denote the zigzag module
obtained by this unfolding process. We compute a direct decomposition $\D$
of $\Q_{ZZ}$ from $\hat{F}_{ZZ}$ by the zigzag persistence algorithm in~\cite{DHM25}.
We say $\D$ is \emph{extendable} if there is a direct decomposition
$\D'$ of $\M_{ZZ}$ so that $\D'=\D\cup\{\Int\}$. 
\begin{proposition}
$\overline{\Int}$ is invertible if and only if $\D$ is extendable.
\label{prop:extendable}
\end{proposition}
\begin{proof}
    If $\D$ is extendable, we have $\M_{ZZ}=\Int\oplus \Q_{ZZ}$ by definition of
    extendability and the fact that $\D$ is a direct decomposition of $\Q_{ZZ}$.
    It follows that $\overline{\Int}$ is
    invertible becasue $\Q_{ZZ}$ is foldable by construction.

    For the opposite direction, assume that $\overline{\Int}$ is invertible.
    Since $\Int$ is foldable, $\Fld_s(\Int)$ exists and is a summand of
    $\M$ according to Theorem~\ref{thm:full-folding}(3). The quotient
    $\Q=\M/(\Fld_s(\Int))$ is a submodule of $\M$. This means the morphisms 
    $\Q(p\leq_P q)$ is a restriction of $\M(p\leq_P q)$ for every pairs $p\leq_P q$.
    Therefore, by the definition of unfolding, every morphism
    $\Q_{ZZ}(p\leq_{P_{ZZ}} q)$ is a restriction of $\M_{ZZ}(p\leq_{P_{ZZ}}q)$.
    We claim that $\D'=\D\cup{\Int}$ is a direct decomposition of $\M_{ZZ}$ establishing that $\D$ is extendable.

    To prove the claim, observe that, due to the quotienting, the vector generating $\Int(p)$ cannot lie in the vector space $\Q_{ZZ}(p)$ for every $p\in P_{ZZ}$. 
    Then, $\M_{ZZ}(p)=\Int(p)\oplus \Q_{ZZ}(p)$ for every $p\in P_{ZZ}$. Also, for
    every summand $\Int'$ of $\Q_{ZZ}$ in $\D$, the morphism $\Int'(p\leq_{P_{ZZ}} q)$ 
    is a restriction of $\M_{ZZ}(p\leq_{P_{ZZ}} q)$ because we already established
    that $\Q_{ZZ}(p\leq_{P_{ZZ}} q)$ is a restriction of $\M_{ZZ}(p\leq_{P_{ZZ}}q)$. Therefore, for every interval module $\Int_i\in \D\cup \{\Int\}$, every morphism $\Int_i(p\leq_{P_{ZZ}} q)$
    is a restriction of $\M_{ZZ}(p\leq_{P_{ZZ}} q)$ over all pairs $p\leq_{P_ZZ} q$
    while $\M_{ZZ}(p)=\oplus_i \Int_i(p)$. It follows that $\D'$ is a direct decomposition of $\M_{ZZ}$. 
   \end{proof}
We test if $\D$ is extendable by checking if the linear maps
connecting two vectors (cycle classes) $[z_p]$ and $[z_q]$ for all immediate
pairs $p,q\in P_{ZZ}$ in every interval module in $\D$ are preserved in
$\D'$. This means that if $p\leq_{P_{ZZ}} q$, the cycles $z_p$ and $z_q$ which
are homologous in the complex $\hat{K}_q$ remain
homologous in the complex $K_q$. This can be easily determined by checking the
annotations of the cycles $z_p$ and $z_q$ in $K_q$. We need to compute the
annotations of representative cycles for the interval modules in $\D$ 
in the complex $K_q$.
We have already seen that it costs $O(tn^{\omega})$ time in total. Thus, the extendability
of $\D$ can be checked in $O(tn^{\omega})$ time.

\cancel{
\begin{proposition}
    $\overline{\Int}$ is invertible if and only if the barcode of $\hat{F}_{ZZ}$ coincides with the barcode of $\bar{\Int}$.
\end{proposition}
\begin{proof}
   First, assume that $\bar\Int$ is invertible. By definition, this means that
   there is a summand, say $\U$, of $\M_{ZZ}$ so that $\U\cong \bar{\Int}$ and $\U$ is foldable.
   Since $\Int$ is foldable, By Theorem~\ref{thm:full-folding}(1), $\Fld_s(\U)$ exists and 
   $\M=\Fld_s(\U)\oplus\Fld_s(\int)$. Then $\Q\cong \Fld_s(\U)$, which implies that
   $\Q_{ZZ}\cong \Fld_s^{-1}(\Fld_s(\U))=\U$. Therefore, the barcode of $\Q_{ZZ}$ coincides with that of $\U$. The barcode of $\Q_{ZZ}$ is the barcode of $\hat{F}_{ZZ}$ and the barcode of $\U$ is same as that of $\bar{\Int}$.

   Now, assume that the barcode of $\hat{F}_{ZZ}$ coincides with that of $\bar{\Int}$.
   For contradiction, assume that $\bar{\Int}$ is not invertible. Since
   $\Fld_s(\Int)$ exists, we still have $\Q=\M/(\Fld_s\Int)$ well defined.
   Then, $\M_{ZZ}=\Int\oplus \Q_{ZZ}$. It means that $\Q_{ZZ}\cong \bar{\Int}$
   in which case $\bar\Int$ is invertible reaching a contradiction. 
\end{proof}
we have to check if $\overline{\Int}(p)=\overline{\Int}(p')$ for every point $p'\in\prt(p)$. The vector space $\overline{\Int}(p)$ for any point
$p\in P_{ZZ}$ is spanned by the basis $([z_p^1],\ldots,[z_p^r])$ where $z_p^i$, $i\in [r]$, are the $k$-cycles computed as representative cycles
for the interval modules $\Int_1,\ldots,\Int_r$ none of which equals $\Int$ and whose support contains $p$, that is, $\Int_i(p)\not = 0$. Let $L$ be the set of points defined in section~\ref{sec:convert}. For every point $p\in L$, we form an annotation matrix
$A_p$ whose columns represent the annotation of the basis elements $[z_p^1],\ldots,[z_p^r]$ for $\overline{\Int}(p)$. Then, for every point
$p'\in \prt(p)$, we consider the column vectors similarly formed by the
basis elements $[z_{p'}^1],\ldots, [z_{p'}^{r}]$ for $\overline{\Int}(p')$
and check if each of the vectors $[z_{p'}^i]$ is in the column span of $A_p$. If so, we have
$\overline{\Int}(p)=\overline{\Int}(p')$ for every $p'\in\prt(p)$ and otherwise not. For better time complexity, we augment $A_p$ with the column vectors made for $[z_{p'}^1],\ldots, [z_{p'}^{r}]$. Then, we reduce this augmented matrix of dimensions $O(g)\times O(gt)$ which takes at most $O(tg^\omega)$ time.}

\vspace{0.1in}
\noindent
{\bf Time complexity.}
Let the graph $G=(P,E(p))$ of the poset $P$ indexing the input filtration $F$ in {\sc GenRank}
have a total of $m$ vertices and edges. 
Recall that $K_p$ denote the complex at point $p\in P$. Let $e_{p,q}=|K_{p}\setminus K_{q}|$ where
$(p, q)$ is a directed edge in $G$, that is, $e_{p,q}$ is the number of simplices
that are added in the filtration going from $p$ to $q$. Let 
\begin{equation}
e=\sum_{(p, q)\in E(P)}e_{p,q} \mbox{ and } t=\max\{m,e\}.
\label{eq:tdef}
\end{equation}
The quantity $t$ is an upper bound on the size of the input
$P$-filtration and the size of the poset $P$.
Let every complex $K_p$ for every
point $p\in P$ has at most $n$ simplices. Then, step 1 of {\sc GenRank}
takes at most $O(t)$ time to unfold the poset to $P_{ZZ}$ with the procedure
described in section~\ref{sec:unfold}. Producing the zigzag filtration $F_{ZZ}$
takes at most $O(t)$ time. Step 2 takes at most $O(n^3)$ time to produce
the annotation matrix~\cite{DW22} for every complex $K_p$, $p\in P_{ZZ}$, giving a total
of $O(tn^3)$ time. Step 3 takes $O(t^2n)$ time to compute the zigzag barcode
with representatives by the algorithm in~\cite{DHM25}.

Step 4 for testing convertibility and converting of $\Int$ takes a total of $O(t^{\omega}n^2)$ time as discussed before. Testing invertibility of
$\bar\Int$ for every $\Int$ involves coning out the filtration $F$ and unfolding $\hat{F}$ which cannot
take more than $O(t)$ time. Computing the zigzag barcode 
of $\hat F_{ZZ}$ with representative cycles takes $O(t^2n)$ time by the
algorithm in~\cite{DHM25} giving a total time of
$O(t^2 n^2)$ for $O(n)$ full interval modules like $\Int$. The verification of the extendability
of the interval decomposition $\D$ of the quotient module $\Q_{ZZ}$ takes
$O(tn^{\omega})$ time, giving a total time of $O(tn^{\omega+1})$ over all.
Therefore, step 4 takes a total time $O(t^2n^2+tn^{\omega+1})$.
Accounting for all terms, we get: 
\begin{theorem}
Let $F$ be an input $P$-filtration of a complex with $n$ simplices where
$F$ and $P$ have size at most $t$. The algorithm {\sc GenRank} computes $\rk(\M)$ in 
$O(tn^{\omega+1}+t^{\omega}n^2)$ time
where $\M$ is the $P$-module induced by $F$. With $n=O(t)$, the bound becomes
$O(t^{\omega+2})$.
\label{thm:main}
\end{theorem}
\begin{proof}
The correctness of {\sc GenRank} follows from Theorem~\ref{thm:algo} because 
step 4 effectively either increases the count $\kappa(\D)$ or determines that $\kappa(\D)$ cannot
be increased in which case $\D$ is $s$-complete.
The time complexity
claim follows from our analysis.
\end{proof}

\section{Special case of degree-$d$ homology for $d$-complexes}
\label{sec:d-complex}
In this section, we show that when a $P$-module $\M$ is induced by applying the homology functor in degree $d\geq 0$ on a $P$-filtration of a $d$-dimensional simplicial complex, we have a much more efficient algorithm for computing $\rk(\M)$.
The key observation is that, in this case, the representatives for the interval modules in a direct decomposition of $\M_{ZZ}$ takes a special form. In particular, the following Proposition holds leading to Theorem~\ref{thm:d-complex}.
\begin{proposition}
    Let $\M$ be a $P$-module where $\M(p)=H_d(K_p)$ with $K_p$ being a simplicial $d$-complex. For an interval module $\Int^{[p_b,p_d]}$ in a direct decomposition of $\M_{ZZ}$ and for any two points $p_i,p_j\in [p_b,p_d]$, the
    representative $d$-cycles $z_{p_i}$ and $z_{p_j}$ are the same.
    \label{prop:d-module}
\end{proposition}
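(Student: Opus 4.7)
My plan is to exploit the fact that in a $d$-dimensional simplicial complex $K$ there are no $(d+1)$-simplices, so $C_{d+1}(K)=0$, hence $B_d(K)=0$, and therefore $H_d(K)=Z_d(K)$. Applied pointwise to every complex in the filtration $F_{ZZ}$, this identification turns each linear map $\phi_i=\M_{ZZ}(p_i\leq p_{i+1})$ (or its reverse, depending on the direction of the arrow) induced by a simplicial inclusion $K_{p_i}\subseteq K_{p_{i+1}}$ into the literal inclusion of $d$-cycle groups $Z_d(K_{p_i})\hookrightarrow Z_d(K_{p_{i+1}})$.

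The main step is then to invoke the compatibility condition that the representative $d$-cycles of an interval module must satisfy. Extending Definition~\ref{dfn:rep-seq} from limit modules to an arbitrary interval summand $\Int^{[p_b,p_d]}$ in a direct decomposition of $\M_{ZZ}$, the representatives $z_{p_b},\ldots,z_{p_d}$ satisfy, for every consecutive pair $p_i,p_{i+1}\in[p_b,p_d]$, either $\phi_i([z_{p_i}])=[z_{p_{i+1}}]$ in the forward case $p_i\to p_{i+1}$, or $\phi_i([z_{p_{i+1}}])=[z_{p_i}]$ in the backward case $p_i\leftarrow p_{i+1}$. Under the identification $H_d(K_p)=Z_d(K_p)$, both sides of each of these equations are literal $d$-cycles, and because $\phi_i$ is simply the inclusion on cycles, the appropriate equation forces $z_{p_i}=z_{p_{i+1}}$ as chains in the larger of the two complexes $K_{p_i}$ and $K_{p_{i+1}}$.

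Finally, iterating this equality along the zigzag subpath connecting any two points $p_i,p_j\in[p_b,p_d]$ yields $z_{p_i}=z_{p_j}$, establishing the proposition. I do not anticipate any real obstacle: once the vanishing $B_d=0$ is observed, the proof is essentially forced by the compatibility of the representatives. The only minor care needed is to handle the forward and backward arrow cases symmetrically, and to note that only compatibility between consecutive points strictly inside $[p_b,p_d]$ is used, so the open/closed type of the endpoints plays no role in the argument.
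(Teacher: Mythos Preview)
Your proposal is correct and follows essentially the same approach as the paper: both arguments reduce to consecutive points, use that in a $d$-complex $B_d=0$ so that homologous $d$-cycles are literally equal as chains, and then extend by transitivity along the zigzag path. Your version simply makes the vanishing $B_d(K_p)=0$ and the identification $H_d=Z_d$ more explicit than the paper's terse ``since $K_{p_j}$ is a $d$-complex, $[z_{p_i}]=[z_{p_j}]$ only if $z_{p_i}=z_{p_j}$ as chains.''
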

\begin{proof}
     First assume that $p_i$ and $p_j$ are immediate points in $P_{ZZ}$ and without loss of generality let $p_i\rightarrow p_j$. According to the definition of representatives, we must have the homology classes $[z_{p_i}]$ and $[z_{p_j}]$ homologous in $K_{p_j}$. Since $K_{p_j}$ is a $d$-complex, $[z_{p_i}]=[z_{p_j}]$ only if $z_{p_i}=z_{p_j}$ as chains. It follows by transitivity that this is true even if $p_i$ and $p_j$ are not immediate.
\end{proof}
It follows from the above proposition that a full interval module $\Int$ in a direct decomposition of $\M_{ZZ}$ is foldable.
Next proposition says that even the complement $\overline\Int$ is invertible. Then, applying Theorem~\ref{thm:full-folding}(3), we can claim that $\Fld_s(\Int)$ is a full
interval summand of $\M$.
\begin{proposition}
    Let $\Int$ be any full interval module in a direct decomposition of $\M_{ZZ}$ where $\M_{ZZ}$ is constructed as in Proposition~\ref{prop:d-module}. Then, $\Int$ is foldable and $\overline\Int$ is invertible.
    \label{prop:d-full}
\end{proposition}
\begin{proof}
    $\Int$ is foldable due to Proposition~\ref{prop:d-module}. We need to show that $\overline{\Int}$ is invertible.

    For the full module $\Int$, there is a single $d$-cycle that forms a fixed representative at each point in $P_{ZZ}$ (Proposition~\ref{prop:d-module}). Let $z$ be such a $d$-cycle and $\sigma$ be any $d$-simplex in $z$. Delete $\sigma$ from the complex $K_p$ for every $p\in P_{ZZ}$. Let $\M_{ZZ}^-$ denote the $P_{ZZ}$-module induced by the homology functor in degree $d$ on the zigzag filtration $F_{ZZ}^-$ obtained from the original filtration $F_{ZZ}$ by deleting the simplex $\sigma$ everywhere. It is easy to verify that $\M_{ZZ}^-$ is $s$-foldable because $K_p\setminus \sigma=K_{p'}\setminus \sigma$ for every $p,p'\in P_{ZZ}$ with $s(p)=s(p')$. 
    A direct decomposition of $\M_{ZZ}$ is obtained from a direct decomposition of $\M_{ZZ}^-$ by adding a full bar with the representative $z$ or equivalently the summand $\Int$. It follows that $\overline{\Int}$ is invertible.
\end{proof}

\begin{theorem}
    Let $\M$ be a module constructed as in Proposition~\ref{prop:d-module} from a $P$-filtration $F$ of a $d$-complex where $P$ and $F$ have size at most $t$. Then, $\rk(\M)$ is the number of full interval modules in any direct decomposition of $\M_{ZZ}$ which can be computed in $O(t^\omega)$ time. 
    \label{thm:d-complex}
\end{theorem}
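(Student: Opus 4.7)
The proof has two parts: the equality $\rk(\M) = \#\{\text{full intervals of }\M_{ZZ}\}$, and an $O(t^\omega)$ algorithm that computes this count.

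\textbf{Equality.} I would take any direct decomposition $\D$ of $\M_{ZZ}$ and argue that each full interval in $\D$ is automatically $s$-complete. By Proposition~\ref{prop:d-module} the representative of every interval in $\D$ is a single $d$-cycle constant along its support, so any full interval $\Int \in \D$ is foldable. Proposition~\ref{prop:d-full} supplies the foldability of its complement $\overline\Int$, so $\Int$ is $s$-complete (Definition~\ref{def:foldable}). Hence $\kappa(\D)$ equals the number of full intervals in $\D$. Proposition~\ref{prop:complete-rank} both bounds $\kappa(\D) \leq \rk(\M)$ and produces an $s$-complete decomposition $\D^*$ realizing $\kappa(\D^*) = \rk(\M)$. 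Krull-Schmidt (Theorem~\ref{thm:krull-schmidt}) then makes the full-interval count a decomposition invariant of $\M_{ZZ}$, so it coincides with $\rk(\M)$ for every $\D$.

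\textbf{Computation.} I would realize the count as $\dim Z_d(K^*)$ where $K^* := \bigcap_{p \in P} K_p$. Proposition~\ref{prop:d-module} places every full-interval representative in $\bigcap_p Z_d(K_p) = Z_d(K^*)$, and distinct full summands contribute linearly independent representatives, giving $\#\text{full} \leq \dim Z_d(K^*)$. For the reverse inequality, each cycle $z \in Z_d(K^*)$ defines a constant global section of $\M$, so $Z_d(K^*)$ embeds into $\lim \M$; because $H_d = Z_d$ in a $d$-complex forces every inclusion-induced map to be injective on cycles, the canonical map $\psi_\M$ is injective on this embedded copy, whence $\rk(\M) \geq \dim Z_d(K^*)$. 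Combined with the first part this pins down $\#\text{full} = \dim Z_d(K^*)$. Algorithmically, I would build $K^*$ in $O(t)$ time by scanning the simplices of the $P$-filtration and retaining those present in every $K_p$, then compute $\mathrm{rank}(\partial_d^{K^*})$ on a matrix of size at most $O(t)\times O(t)$ in $O(t^\omega)$ time, returning $|K^*_d| - \mathrm{rank}(\partial_d^{K^*})$.

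\textbf{Main obstacle.} The subtle step is tying $\dim Z_d(K^*)$ to the number of full intervals, rather than to the dimension of some larger limit-like invariant that could also absorb contributions from non-full summands. This is where the $d$-complex hypothesis is used in a non-cosmetic way: the vanishing of $(d{+}1)$-boundaries makes inclusion-induced maps injective on $Z_d$, so no cycle in $Z_d(K^*)$ can be killed or scattered across closed or open-endpoint intervals, and Proposition~\ref{prop:d-full} packages this exactly into the foldability-of-complements condition that Theorem~\ref{thm:full-folding}(1) needs to fold a full interval of $\M_{ZZ}$ into a full-interval summand of $\M$. Once this identification is in hand, the time bound is a single matrix-rank computation.
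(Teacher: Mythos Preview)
Your equality argument matches the paper's closely: both use Proposition~\ref{prop:d-full} to see that every full interval in any decomposition $\D$ of $\M_{ZZ}$ is $s$-complete, so $\kappa(\D)$ equals the number of full intervals. The paper then invokes Proposition~\ref{prop:convert} (the sandwich $\kappa(\D)\le\rk(\M)\le\tau(\D)$ collapses once $\kappa=\tau$), whereas you use Proposition~\ref{prop:complete-rank} together with Krull--Schmidt; either route works.

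Your computational step is genuinely different. The paper simply runs the fast zigzag algorithm of~\cite{DH22} on $F_{ZZ}$ and counts full bars, which already gives the $O(t^\omega)$ bound. You instead prove a closed-form identity $\rk(\M)=\dim Z_d(K^*)$ with $K^*=\bigcap_{p\in P}K_p$ and compute this via a single boundary-rank calculation, bypassing the zigzag machinery entirely on the algorithmic side. Your route is more elementary and yields an explicit description of the rank as the top cycle space of the intersection complex; the paper's route stays inside the unfolding framework and needs no additional limit/colimit reasoning. One sentence of your argument needs tightening: injectivity of the structure maps $Z_d(K_p)\hookrightarrow Z_d(K_q)$ does not by itself force the colimit legs $i_p$ (and hence $\psi_\M$) to be injective. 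What actually does the job here is that the inclusions $Z_d(K_p)\hookrightarrow Z_d(K)$ assemble into a cocone over $\M$, so the universal map $\colim\,\M\to Z_d(K)$ factors each $i_p$ through an injection; with that observation in place you get $\psi_\M$ injective and hence $\rk(\M)=\dim\limproj\,\M=\dim Z_d(K^*)$ as claimed.
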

\begin{proof}
    By Proposition~\ref{prop:d-full} and Theorem~\ref{thm:full-folding}(3), we get that $\kappa(\D)=\tau(\D)$ for any direct
    decomposition $\D$ of $\M_{ZZ}$. Then, it follows from
    Proposition~\ref{prop:convert} that $\rk(\M)=\kappa(\D)$, which is exactly equal to the number of full interval modules in any direct decomposition of $\M_{ZZ}$. Therefore, $\rk(\M)$ can be simply obtained by computing the zigzag barcode of $\M_{ZZ}$ (no need of computing the representatives). This can be done in $O(t^\omega)$ time with the fast zigzag algorithm~\cite{DH22}. 
\end{proof}
If $\M$ is induced by a $P$-filtration of a graph, then $\rk(\M)$ can be computed even faster. 
Observe that every $1$-cycle that represents a full bar must continue to be present from the initial graph $G_{p_0}$ at $p_0\in P$ to the final graph $G_{p_m}$ at $p_m\in P$. This suggests the following algorithm. Take $G_{p_m}$; delete all edges and vertices that are deleted as one moves along $P_{ZZ}$ from $p_0$ to $p_m$, but do not insert any of the added edges and vertices. In the final graph $G_{p_m}'$ thus obtained (which may be different from $G_{p_m}$ because we ignore the inserted edges and vertices along $P_{ZZ}$), we compute the number of independent $1$-cycles. This number can be computed by a depth first search in $G_{p_m}'$ in linear time. Then, as an immediate corollary we have Theorem~\ref{thm:graph}.
\begin{theorem}
    If $\M$ is induced by degree-$1$ homology of a $P$-filtration $F$ of a graph with a total of $n$ vertices and edges, then $\rk(\M)$ can be computed in $O(n+t)$ time where $P$ and $F$ have size at most $t$.
    \label{thm:graph}
\end{theorem}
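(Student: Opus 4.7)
The plan is to exploit Theorem~\ref{thm:d-complex} with $d=1$ together with the especially rigid cycle structure of graphs so as to replace the zigzag persistence computation by a single depth-first search. Applying Theorem~\ref{thm:d-complex} reduces the task to counting full interval modules in any direct decomposition of $\M_{ZZ}$. I will show that this count equals $\dim H_1(G^*)$, where $G^* := \bigcap_{p\in P} K_p$ is the subgraph of simplices present at every index of the filtration (this is indeed a subcomplex since membership of an edge forces membership of its endpoints).

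First I would establish the upper bound. By Proposition~\ref{prop:d-module} applied with $d=1$, each interval summand of $\M_{ZZ}$ admits a single $1$-cycle $z$ (as a chain) as a representative at every point in its support. For a full interval module, $z$ must therefore lie in $Z_1(K_{s(p)}) = Z_1(K_p)$ for every $p \in P$ (using that $s$ is surjective by Fact~\ref{cor:folding}), hence $z \in Z_1(G^*)$. The $1$-cycles attached to distinct full summands are $\mathbb{F}$-linearly independent since their classes form part of a basis of $H_1(K_{p_0})$ and since $B_1$ vanishes on a graph, so the number of full bars is at most $\dim Z_1(G^*) = \dim H_1(G^*)$.

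Next I would establish the matching lower bound via the limit picture. Any $z \in Z_1(G^*)$ yields a limit representative of $\M_{ZZ}$ in the sense of Definition~\ref{dfn:rep-seq} by placing the same cycle $z$ at each point of $P_{ZZ}$, so it contributes to $\limproj \M_{ZZ}$. Moreover, the limit-to-colimit map of $\M_{ZZ}$ restricted to the image of $Z_1(G^*)$ is injective: a graph has no $1$-boundaries, so if $[z]$ vanishes in the colimit then already $z = 0$ in every $H_1(K_p)$. Hence the rank of the limit-to-colimit map of $\M_{ZZ}$ is at least $\dim Z_1(G^*)$, and by Theorem~\ref{thm:rk} applied to $\M_{ZZ}$ this rank equals the number of full bars. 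Combined with the upper bound, that number is exactly $\dim H_1(G^*) = |E(G^*)| - |V(G^*)| + c(G^*)$ where $c(G^*)$ is the number of connected components of $G^*$.

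Finally, I would implement the computation. Unfolding $P$ into $P_{ZZ}$ as in Section~\ref{sec:unfold} takes $O(t)$ time. A single scan over the $P$-filtration maintains, for every simplex in the ambient graph, a counter recording the number of $K_p$ containing it; a simplex is persistent precisely when this counter reaches $|P|$. This identifies $G^*$ in $O(t)$ time. A depth-first search on $G^*$ then yields $|V(G^*)|$, $|E(G^*)|$, and $c(G^*)$ in $O(|V(G^*)| + |E(G^*)|) = O(n)$ time, after which $\rk(\M) = |E(G^*)| - |V(G^*)| + c(G^*)$ is returned. The total cost is $O(n+t)$. The step I expect to be the main obstacle is the lower-bound argument, i.e.\ showing that every independent element of $Z_1(G^*)$ is actually realized as a full interval summand rather than being absorbed into non-full summands; the limit-to-colimit injectivity sketched above, rather than a direct basis manipulation via Proposition~\ref{prop:limit-module}, appears to be the cleanest route.
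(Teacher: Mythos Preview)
Your plan is exactly the paper's: invoke Theorem~\ref{thm:d-complex} with $d=1$, identify the number of full bars of $\M_{ZZ}$ with $\dim H_1(G^*)$ where $G^*=\bigcap_{p\in P}K_p$, and read that off by a single depth-first search. The paper is in fact terser than you are---it describes $G^*$ operationally (walk the zigzag from $p_0$ to $p_m$, delete everything that is ever deleted, never insert) and declares the theorem an immediate corollary without isolating the two inequalities.

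Two places deserve tightening. For the lower bound, the sentence ``if $[z]$ vanishes in the colimit then already $z=0$ in every $H_1(K_p)$'' is true here but is not yet an argument: the missing step is that, since a graph has no $1$-boundaries, each $\M_{ZZ}(p_i)=Z_1(K_{p_i})$ sits as a subspace of $Z_1(K)$ and every structure map of $\M_{ZZ}$ is a subspace inclusion, so there is a well-defined linear map $\colim\,\M_{ZZ}\to Z_1(K)$ and the composite $\limproj\,\M_{ZZ}\to\colim\,\M_{ZZ}\to Z_1(K)$ is the inclusion $\bigcap_i Z_1(K_{p_i})\hookrightarrow Z_1(K)$, forcing the limit-to-colimit map to be injective. (Equivalently and perhaps more directly: injectivity of every structure map rules out any non-full open-open bar, so $\dim\limproj\,\M_{ZZ}$ already equals the number of full bars.) For the implementation, your per-simplex counter requires touching each simplex of each $K_p$, i.e.\ $\sum_p |K_p|$ work, which is not $O(t)$ under the paper's size measure in Eq.~\eqref{eq:tdef}; the paper's delete-only walk along $P_{ZZ}$ builds $G^*$ in $O(n+t)$.
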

\section{Conclusions and discussions}
\label{sec:conclusion}
\cancel{One parameter slicing introduced to define matching
distances between multiparameter persistence modules~\cite{BL21,Cerri,KLO20} loses
information. The zigzag straightening proposed here is lossless
because it retains all information of the original module.
Can it be used to define more discriminating and stable
invariants--a question we pose for future investigation. 
}
Analyzing a mutliparameter persistence module with the help of
one parameter persistence modules is not new. It has been introduced in the
context of computing 
matching distances~\cite{BL21,Cerri,KLO20} for $2$-parameter
 modules.
The unfolding/folding technique proposed here offers a different
slicing technique. By producing one zigzag path instead of multiple slices,
the unfolding preserves the structural maps of the original module $\M$ 
in a lossless manner. 
We showed how to use them for reconstructing full interval modules and hence
for computing the generalized rank. It will be interesting to see what other 
invariants one can reconstruct using the folding/unfolding of persistenec modules. A natural
candidate would be to compute the limit and colimit of the 
original module from its zigzag straightening. Recent advances in
zigzag persistence computations~\cite{DH22,maria2014zigzag,milosavljevic2011zigzag} can then
be taken advantage of for computing limits and colimits.

A natural question to ask is that if the approach laid out in this paper works
even when the original input module is unfolded into another module that is
not necessarily a zigzag module. This could lead to a more generalized theory
for unfolding. Unfortunately, this question does not have a positive answer. The
main bottleneck arises while attempting to extend Proposition~\ref{prop:complete-interval}. This
proposition says that any full interval module that is a submodule of a zigzag module
$\M_{ZZ}$ is a sum of the limit modules in a direct decomposiiton of
$\M_{ZZ}$. We cannot extend this result to every module $\M$ that is not a zigzag module. The proof of Theorem~\ref{thm:algo} uses Proposition~\ref{prop:convert}
which depends on this result in a fundamental way. To see the obstacle,
consider the module $\M$ in Figure~\ref{fig:unfoldingEx3} (left,top). This module
has a full interval module $\Int$ as a submodule, namely $\Int$ is given by
the vector spaces spanned by the vector $v$ at every point and the internal map as
the identity. However, the module $\M$ itself is not decomposable and hence
has no limit module as a summand. Thus, the full module $\Int$ has no expression
as a sum of limit modules that are summands of $\M$. The situation does not
improve even if $\Int$ is a summand instead of being merely a submodule. The
module $\M$ in Figure~\ref{fig:unfoldingEx3} (left,bottom) has a full interval
module $\Int$ as a summand whose vector spaces are spanned by the vector $u+v$ at
every point. Also, a different decomposition of $\M$ can have a full interval module
$\Int'$ as a summand whose vector spaces are spanned by the vector
$u$ at every point. But, $\Int$ cannot be expressed as a sum of limit modules
in this decomposition which has $\Int'$ as the only limit module.

The special structure
of the zigzag modules allows us to express any full submodule 
as a sum of limit modules that are summands, which in turn lets our algorithm to
check if a full module in the current decomposition is convertible or not.

\subsection*{Acknowledgment.} We thank Iason Papadopoulos of University of Bremen for useful comments on the paper.
\bibliography{biblio}

\cancel{
\begin{remark}\label{rem:canonical projection}Let $\Pb$ be a finite and connected poset. Let $Q$ be a finite and connected subposet of $\Pb$. Let us fix any $F:\Pb \rightarrow \vect$.
\begin{enumerate}[label=(\roman*)]
    \item  For any cone $\left(L', (\pi_p')_{p\in \Pb}\right)$ over $F$, its restriction $\left(L', (\pi_p')_{p\in Q}\right)$ is a cone over the restriction $F\vert_Q:Q\rightarrow \vect$. Therefore, by the terminal property of the limit  $\left(\varprojlim F\vert_{Q}, (\pi_q)_{q\in Q}\right)$, there exists the unique morphism $u:L' \rightarrow \varprojlim F\vert_{Q}$ such that  $\pi_q'=\pi_q\circ u$ for all $q\in Q$.\label{item:canonical projection 1}
    \item  For any cocone $\left(C', (i_p')_{p\in \Pb}\right)$ over $F$, its restriction $\left(C', (i_p')_{p\in Q}\right)$ is a cocone over the restriction $F\vert_Q:Q\rightarrow \vect$. Therefore, by the initial property of $\varinjlim F\vert_{Q}$, there exists the unique morphism $u:\varinjlim F\vert_{Q} \rightarrow C'$ such that $i'_q=u\circ i_q$ for all $q\in Q$.\label{item:canonical projection 2}
     \item By the previous two items, there exist linear maps $\pi:\varprojlim F\rightarrow \varprojlim F\vert_{Q}$ and $\iota:\varinjlim F\vert_{Q}\rightarrow \varinjlim F$ such that
     $\psi_F=\iota \circ \psi_{F\vert_{Q}} \circ \pi.$\label{item:canonical projection 3}
\end{enumerate}
Therefore, $\rank(F)=\rank(\psi_{F})\leq \rank (\psi_{F\vert_{Q}})=\rank(F\vert_{Q})$.
\end{remark}
}
\cancel{
\section{Missing proof in section~\ref{sec:zigzag}}

\begin{proof}[Proof of Proposition~\ref{obs:limit-module}]
    Let $\Int_1,\ldots,\Int_k$ be the set of interval modules in the considered
    direct decomposition of the $P_{ZZ}$-module $\M_{ZZ}$. For any $p\in P_{ZZ}$, since $\M_{ZZ}(p)=\text{span}(\bs_{\Int_1(p)},\cdots,\bs_{\Int_k(p)})$, there exist uniquely determined
    $\alpha^p_i\in\mathbb{F}$ so that
    $\bs_{\Int(p)}=\sum_i \alpha^p_i \bs_{\Int_i(p)}$. Let $L_p=\{\Int_i\,|\, \alpha^p_i\not=0\}$
    and $L'=\cup_p L_p$. It is not difficult to show that
    $\alpha^p_i=\alpha^q_i:=\alpha_i$ for any two points $p,q$
    in the support of $\Int_i$.
    Then, we can write $L'=\{\Int_i \,|\, \alpha_i\not = 0\}$. We claim that
    $L'\subseteq L$. If not, there is an interval module $\Int'\in L'$ that is not
    a limit module, that is, $\Int'$ has an end point $p_j\not \in \{p_0, p_m\}$ satisfying either of the following two cases: (i) the arrow $f_{p_{j-1}}$ is forward and the cokernel of $f_{p_{j-1}}$ restricted to $\Int'(p_{j-1})$ is non-zero.
    It follows that the cokernel of $f_{p_{j-1}}$ restricted to $\Int(p_{j-1})$ is also non-zero. This
    is impossible as $\Int$ is a full interval module and $p_j\not\in\{p_0,p_m\}$, (ii) the arrow $f_{p_{j}}$ is backward: again, we reach an impossibility with a similar argument. So, $L'\subseteq L$ and it follows that $\Int=\sum_{\Int_i\in L'}\alpha_i\Int_i$ establishing the claim of the proposition. 
\end{proof}
}

\cancel{
\begin{proof}[Proof of Theorem~\ref{thm:full-folding}]
The modules $\Int$ and $\overline\Int$ satisfy the conditions in Proposition~\ref{prop:full-folding}. Thus, $\Fld_s (\Int)$ is a summand of $\M$.
Since $\Int$ is full, the module $\Fld_s(\Int)$
has full support on $P$ with its internal morphisms being non-zero  between  one dimensional vector spaces.
The claim follows.
\end{proof}
}
\cancel{
\begin{proof}[Proof of Proposition~\ref{prop:complete-rank}]
Let $\M=\Int_1\oplus\cdots\oplus\Int_r\oplus \M'$ where $\Int_1,\ldots,\Int_r$
are full interval modules. By Theorem~\ref{thm:rk}, $r=\rk(\M)$. Then, $\M_{ZZ}=\Fld_s^{-1}(\Int_1)\oplus\cdots\oplus\Fld_s^{-1}(\Int_r)\oplus \Fld_s^{-1}(\M')$ by Theorem~\ref{thm:full-folding}. Furthermore, each of $\Fld_s^{-1}(\Int_i)$, $1\leq i\leq r$, is a full module because each $\Int_i$ is so. By definition both $\Fld_s^{-1}(\Int_i)$ and its complement are foldable. Therefore, each $\Fld_s^{-1}(\Int_i)$ is $s$-complete. We observe that $\Fld_s^{-1}(\M')$ cannot have a summand which is $s$-complete because if that were the case, $\M_{ZZ}$ would have more than $r$ $s$-complete interval modules as its summand each of which would fold to a full interval summand in $\M$ (Theorem~\ref{thm:full-folding}). This is not possible because in that case $\M$ would have more than $r$ summands that are full intervals, an impossibility according to Theorem~\ref{thm:rk}.
It follows that $\M_{ZZ}=\Fld_s^{-1}(\Int_1)\oplus\cdots\oplus\Fld_s^{-1}(\Int_r)\oplus \Fld_s^{-1}(\M')$ is an $s$-complete decomposition of $\M_{ZZ}$. 
\end{proof}
}

\cancel{
\begin{proof}[Proof of Theorem~\ref{thm:main}]
The correctness of {\sc GenRank} follows from Theorem~\ref{prop:convert} because it checks the convertibility of the full interval modules and foldability of their complements as stated in this theorem. The time complexity
claim follows from our analysis.
\end{proof}

\section{Missing details in section~\ref{sec:d-complex}}
\label{appendix:d-complex}
\begin{proof}[Proof of Proposition~\ref{prop:d-module}]
     First assume that $p_i$ and $p_j$ are immediate points in $P_{ZZ}$ and without loss of generality let $p_i\rightarrow p_j$. According to the definition of representatives, we must have the homology classes $[z_{p_i}]$ and $[z_{p_j}]$ homologous in $K_{p_j}$. Since $K_{p_j}$ is a $d$-complex, $[z_{p_i}]=[z_{p_j}]$ only if $z_{p_i}=z_{p_j}$ as chains. It follows by transitivity that this is true even if $p_i$ and $p_j$ are not immediate.
\end{proof}
It follows from the above proposition that a full interval module in a direct decomposition of $\M_{ZZ}$ is foldable.
Next proposition says that even the complement $\overline\Int$ is foldable. Then, applying Theorem~\ref{thm:full-folding}(1), we can claim that $\rk(\M)$ is equal to the number of full interval modules in $\M_{ZZ}$.

\begin{proposition}
    Let $\Int$ be a full interval module in a direct decomposition of $\M_{ZZ}$ where $\M_{ZZ}$ is constructed as in Proposition~\ref{prop:d-module}. Then, both $\Int$ and $\overline\Int$ are foldable.
    \label{prop:d-full}
\end{proposition}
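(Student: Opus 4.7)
The plan is to handle $\Int$ and $\overline{\Int}$ separately, relying on Proposition~\ref{prop:d-module} for the first and a simplex-deletion construction for the second.

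For $\Int$, I will invoke Proposition~\ref{prop:d-module} to produce a single $d$-cycle $z$ that serves as the fixed representative of $\Int$ at every point of $P_{ZZ}$. Since $\Int$ is full, $\Int(p)=\mathbb{F}\cdot[z]$ at every $p$. The unfolding construction forces $K_p=K_{p'}$ whenever $s(p)=s(p')$, so $H_d(K_p)=H_d(K_{p'})$ and the one-dimensional subspace $\mathbb{F}\cdot[z]$ is literally the same on both sides, yielding $\Int(p)=\Int(p')$ and thus the foldability of $\Int$.

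For $\overline{\Int}$, the plan is to exhibit a concrete foldable complement inside $\M_{ZZ}$. Pick any $d$-simplex $\sigma$ appearing in the chain $z$ (possible since $z$ is a nonzero $d$-cycle) and build a modified zigzag filtration $F_{ZZ}^-$ by $K_p^-:=K_p\setminus\sigma$ at every point. Let $\M_{ZZ}^-$ be the $P_{ZZ}$-module obtained by applying degree-$d$ homology to $F_{ZZ}^-$. Because $s(p)=s(p')$ implies $K_p^-=K_{p'}^-$, the module $\M_{ZZ}^-$ is manifestly $s$-foldable. The key algebraic step is to show that $\M_{ZZ}=\Int\oplus\M_{ZZ}^-$ as an internal direct sum of submodules: since $K_p$ is a $d$-complex, $H_d(K_p)=Z_d(K_p)$, and every $d$-cycle $c\in Z_d(K_p)$ decomposes uniquely as $\alpha z+c'$ with $c'\in Z_d(K_p^-)$, where $\alpha$ is the coefficient of $\sigma$ in $c$. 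This gives the pointwise splitting $H_d(K_p)=\mathbb{F}\cdot[z]\oplus H_d(K_p^-)$, and because every structure map of $\M_{ZZ}$ is induced by an inclusion of complexes and is therefore injective on cycles, the splitting is preserved: the cycle $z$ is sent to itself, and cycles avoiding $\sigma$ map to cycles avoiding $\sigma$. Setting $\overline{\Int}:=\M_{ZZ}^-$ then exhibits a foldable complement of $\Int$, as required.

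The main obstacle will be the second step, where one must not merely check the pointwise direct sum $H_d(K_p)=\mathbb{F}\cdot[z]\oplus H_d(K_p^-)$ but verify that both $\Int$ and $\M_{ZZ}^-$ are closed under every structure map of $\M_{ZZ}$, so that the decomposition is genuine on the level of modules. This is where the special structure of a $d$-complex — no $(d+1)$-simplices, hence no boundaries that could mix top-dimensional cycles — does the real work; without it, an inclusion of complexes might identify $z$ with a sum of the form $\alpha z+c'$ and destroy the claimed summand.
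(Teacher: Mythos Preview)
Your proposal is correct and follows essentially the same approach as the paper's proof: invoke Proposition~\ref{prop:d-module} for $\Int$, then delete a $d$-simplex $\sigma$ of the constant representative cycle $z$ to realize $\M_{ZZ}^-$ as a foldable complement $\overline{\Int}$. You are more explicit than the paper in verifying the internal direct-sum splitting $\M_{ZZ}=\Int\oplus\M_{ZZ}^-$ via the unique decomposition $c=\alpha z+c'$ of top-dimensional cycles, which the paper compresses into a single sentence.
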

\begin{proof}
    $\Int$ is foldable due to Proposition~\ref{prop:d-module}. Let $p,p'\in P_{ZZ}$ be any two points with $s(p)=s(p')$. We need to show that $\overline{\Int}(p)=\overline{\Int}(p')$.

    For the full module $\Int$, there is a single $d$-cycle that forms the representative at each point in $P_{ZZ}$ (Proposition~\ref{prop:d-module}). Let $z$ be such a $d$-cycle and $\sigma$ be any $d$-simplex in $z$. Delete $\sigma$ from the complex $K_p$ for every $p\in P_{ZZ}$. Let $\M_{ZZ}^-$ denote the $P_{ZZ}$-module induced by the homology functor in degree $d$ on the zigzag filtration $F_{ZZ}^-$ obtained from the original filtration $F_{ZZ}$ by deleting the simplex $\sigma$ everywhere. It is easy to verify that $\M_{ZZ}^-$ is $s$-foldable because $K_p\setminus \sigma=K_{p'}\setminus \sigma$ for every $p,p'\in P_{ZZ}$ with $s(p)=s(p')$. 
    A direct decomposition of $\M_{ZZ}$ is obtained from a direct decomposition of $\M_{ZZ}^-$ by adding a full bar with the representative $z$. Therefore, $\overline{\Int}$ is equal to $\M_{ZZ}^-$ and thus foldable.
\end{proof}
\begin{proof}[Proof of Proposition~\ref{thm:d-complex}]
    By Proposition~\ref{prop:d-full} and Theorem~\ref{thm:full-folding}(1), $\rk(\M)$ is exactly equal to the number of full interval modules in any direct decomposition of $\M_{ZZ}$. Therefore, $\rk(\M)$ can be simply obtained by computing the zigzag barcode of $\M_{ZZ}$ (no need of computing the representatives). This can be done in $O(t^\omega)$ time with the fast zigzag algorithm~\cite{DH22}. 
\end{proof}

If $\M$ is induced by a $P$-filtration of a graph, then $\rk(\M)$ can be computed even faster. 
Observe that every $1$-cycle that represents a full bar must continue to be present from the initial graph $G_{p_0}$ at $p_0\in P$ to the final graph $G_{p_m}$ at $p_m\in P$. This suggests the following algorithm. Take $G_{p_m}$; delete all edges and vertices that are deleted as one moves along $P_{ZZ}$ from $p_0$ to $p_m$, but dont insert any of the added edges and vertices. In the final graph $G_{p_m}'$ thus obtained (which may be different from $G_{p_m}$ because we ignore the inserted edges and vertices along $P_{ZZ}$), we compute the number of independent $1$-cycles. This number can be computed by a depth first search in $G_{p_m}'$ in linear time. Then, as an immediate corollary we have Theorem~\ref{thm:graph}.
}

\end{document}